\newtheorem{theorem}{Theorem}[section]
\newtheorem{corollary}[theorem]{Corollary}
\newtheorem{definition}[theorem]{Definition}
\newtheorem{example}[theorem]{Example}
\newtheorem{lemma}[theorem]{Lemma}
\newtheorem{proposition}[theorem]{Proposition}
\newtheorem{remark}[theorem]{Remark}
\def\@cite#1#2{{\m@th\upshape\bfseries%
[{#1\if@tempswa{\m@th\upshape\mdseries, #2}\fi}]}} \makeatother
\newcommand{\bbC}{{\mathbb{C}}}
\newcommand{\bbN}{{\mathbb{N}}}
\newcommand{\bbT}{{\mathbb{T}}}
\newcommand{\bbZ}{{\mathbb{Z}}}
\newcommand{\A}{{\mathcal{A}}}
\newcommand{\C}{{\mathcal{C}}}
\newcommand{\E}{{\mathcal{E}}}
\newcommand{\F}{{\mathcal{F}}}
\newcommand{\G}{{\mathcal{G}}}
\renewcommand{\H}{{\mathcal{H}}}
\newcommand{\I}{{\mathcal{I}}}
\newcommand{\J}{{\mathcal{J}}}
\newcommand{\K}{{\mathcal{K}}}
\renewcommand{\L}{{\mathcal{L}}}
\newcommand{\M}{{\mathcal{M}}}
\renewcommand{\O}{{\mathcal{O}}}
\renewcommand{\P}{{\mathcal{P}}}
\newcommand{\T}{{\mathcal{T}}}
\newcommand{\X}{{\mathcal{X}}}
\newcommand{\fA}{{\mathfrak{A}}}
\renewcommand{\phi}{\varphi}
\newcommand{\upchi}{{\raise.35ex\hbox{\ensuremath{\chi}}}}
\def\gs{\sigma}
\def\ga{\alpha}
\def\gl{\lambda}
\def\gm{\gamma}
\def\eps{\varepsilon}
\newcommand{\id}{{\operatorname{id}}}
\newcommand{\spn}{\operatorname{span}}
\newcommand\Span{\mathop{\rm span}}
\newcommand{\ca}{\mathrm{C}^*}
\newcommand{\Ginfty}{{\mathcal{G}}^{(\infty)}}
\newcommand{\sca}[1]{\left\langle#1\right\rangle}
\newcommand{\lsca}[1]{\left[#1\right]}
\newcommand\0{\vec{0}}
\newcommand{\vrt}{\G^{(0)}}
\newcommand{\vrtm}{\G^{(0)}_{-}}
\newcommand{\edg}{\G^{(1)}}
\newcommand{\xtau}{X_{\tau}}
\newcommand{\atau}{A_{\tau}}
\newcommand{\ptau}{\phi_{\tau}}
\begin{document}

\title[$\ca$-correspondences and multivariable dynamics]
{Contributions to the theory of $\ca$-correspondences with applications
to multivariable dynamics}

\author[E. Kakariadis]{Evgenios T.A. Kakariadis}
\address{Department of Mathematics\\University of Athens
\\ 15784 Athens \\GREECE}
\email{mavro@math.uoa.gr}

\author[E.G. Katsoulis]{Elias~G.~Katsoulis}
\address{ Department of Mathematics\\University of Athens
\\ 15784 Athens \\GREECE \vspace{-2ex}}
\address{\textit{Alternate address:} Department of Mathematics
\\East Carolina University\\ Greenville, NC 27858\\USA}
\email{katsoulise@ecu.edu}

\begin{abstract}
Motivated by the theory of tensor algebras and multivariable $\ca$-dynamics, we revisit two fundamental techniques in the theory of $\ca$-correspondences, the ``addition of a tail'' to a non-injective $\ca$-correspondence~\cite{MuTom04} and the dilation of an injective $\ca$-correspondence to an essential Hilbert bimodule~\cite{Pim, Sch00}. We provide a very broad scheme for ``adding a tail'' to a non-injective $\ca$-correspondence; our scheme includes the ``tail'' of Muhly and Tomforde as a special case. We illustrate the diversity and necessity of our tails with several examples from the theory of multivariable $\ca$-dynamics.
We also exhibit a transparent picture for the dilation of an injective $\ca$-correspondence to an essential Hilbert bimodule.
As an application of our constructs, we prove two results in the theory of multivariable dynamics that extend results from~\cite{DavR, Pet, Cun}. We also discuss the impact of our results on the description of the $\ca$-envelope of a tensor algebra as the Cuntz-Pimsner algebra of the associated $\ca$-correspondence~\cite{KatsKribs06, MS}.
\end{abstract}

\thanks{2000 {\it  Mathematics Subject Classification.}
47L55, 47L40, 46L05, 37B20}
\thanks{{\it Key words and phrases:} $\ca$-correspondences, $\ca$-envelope, adding a tail, Hilbert bimodule, crossed product by endomorphism}
% \thanks{Second author was partially supported by a grant from ECU}

\date{}
\maketitle

\section{Introduction}
Initial motivation for this paper came from a recent result of Davidson and Roydor \cite{DavR}. Specifically, Davidson and Roydor prove that the Cuntz-Pimsner algebra of a multivariable dynamical system for a \textit{commutative} $\ca$-algebra is Morita equivalent to a crossed product of a $\ca$-algebra by an injective endomorphism, thus extending earlier results or claims by Peters~\cite{Pet} and Cuntz~\cite{Cun}. A key step in their proof is a result of a particular interest to us: the Cuntz-Pimsner algebra of a non-injective multivariable system for a commutative $\ca$-algebra is Morita equivalent to the Cuntz-Pimsner algebra of an injective one. The Cuntz-Pimsner algebras of multivariable dynamical systems include as special cases Cuntz's twisted tensor products by $\O_n$ \cite{Cun} and therefore the result of Davidson and Roydor provides an alternative description for these algebras but only in the commutative case. Since several important cases of twisted tensor products by $\O_n$ involve non-commutative $\ca$-algebras, it is desirable to establish a similar result in the non-commutative setting as well. This is accomplished for \textit{arbitrary} $\ca$-algebras in Theorem~\ref{multitail}, Theorem~\ref{injectivemulti} and Corollary~\ref{main} of the present paper. But the paper contains much more than just these generalizations.

Davidson and Roydor prove their results by making heavy use of Gelfand theory. In order to extend their results to the non-commutative setting, we must follow an alternate route that avoids both Gelfand theory and an error in \cite{DavR}.
  This route comes from the theory of $\ca$-correspondences and in particular from a fundamental techniques in that theory: the addition of a tail to a non-injective $\ca$-correspondence (Muhly and Tomforde \cite{MuTom04}. As it appears in the current literature, this technique is insufficiently developed for our purposes. This shifts the main focus of this paper from multivariable dynamics to the abstract theory of $\ca$-correspondences. Consequently, the majority of this paper is occupied with the enrichment and further development of the ``addition of a tail'' to a non-injective $\ca$-correspondence.

  Additional motivation for the further development of this techniques comes from the abstract characterization of the $\ca$-envelope for a tensor algebra of a $\ca$-correspondence. Building on earlier work of Muhly and Solel ~\cite{MS} and Fowler, Muhly and Raeburn~\cite{FMR}, Katsoulis and Kribs~\cite{KatsKribs06} have shown that the $\ca$-envelope of a tensor algebra $\T_X^+$ of a $\ca$-correspondence $(X, A, \phi_X)$ is isomorphic to the Cuntz-Pimsner algebra $\O_X$. In Theorem~\ref{lmain}, by adding a tail to a non-injective correspondence $(X, A, \phi_X)$ we obtain that $\ca_e(\T_{X}^+)$ is a full corner of a Cuntz-Pimsner algebra $\O_Y$ for of an essential Hilbert bimodule $(Y, B, \phi_Y)$. Therefore it becomes important to fully develop and understand the addition of a tail, as it leads to a variety of Morita-equivalent pictures for $\ca_e(\T_{X}^+)$ and therefore have an impact the classification theory for tensor algebras. This result also motivates us to re-examine another important technique in the theory of $\ca$-correspondences: the dilation of an injective $\ca$-correspondence to an essential Hilbert bimodule (Pimsner \cite{Pim}, Schweizer \cite{Sch00}). This is also done in this paper.

  By ``adding a tail'' to a non-injective correspondence we mean that the original correspondence is naturally embedded in an injective correspondence so that the Cuntz-Pimsner algebras associated with both correspondences are (strongly) Morita equivalent. This technique originated in the theory of graph algebras and it was extended to the theory of $\ca$-correspondences by Muhly and Tomforde \cite{MuTom04}. For each non-injective $\ca$-correspondence, Muhly and Tomforde construct a (unique) tail associated with that correspondence. Even though their construction has had significant applications \cite{KatsKribs06}, it is not flexible enough for our needs. To make the case, we show in Proposition~\ref{MTtail} that by adding the Muhly-Tomforde tail to the $\ca$-correspondence of a dynamical system, we obtain a $\ca$-correspondence that lies outside the class of correspondences associated with dynamical systems. In order to bypass this and other limitations, we upgrade the whole construction and we build new types of tails. We can now associate to each $\ca$- correspondence an uncountable family of tails, parameterized by graphs of correspondences satisfying certain compatibility conditions (thus making contact with the recent work of Deaconu et al.~\cite{Deac}). This is done in Theorem~\ref{non-injective case} and, as the reader might suspect, the elaborate part is verifying Morita equivalence. The end result is a very broad class of tails for $\ca$-correspondences, whose variety and usefulness is illustrated by several examples. In particular, in Theorem~\ref{multitail} we present a tail very much different from that of Muhly and Tomforde or Davidson and Roydor, which is used to show that the Cuntz-Pimsner $\ca$-algebra of an arbitrary multivariable system is indeed Morita equivalent to the Cuntz-Pimsner algebra of an injective one, thus resolving one of the motivating questions for this paper.

  By dilating an injective $\ca$-correspondence to an essential Hilbert bimodule, we simply mean that we naturally embed the original correspondence in an essential Hilbert bimodule, so that both correspondences have the same Cuntz-Pimsner $\ca$-algebra associated with them. This technique goes back to the seminal paper of Pimsner~\cite{Pim} but formally it was introduced in a paper by Schweizer~\cite{Sch00} regarding the simplicity of the Cuntz-Pimsner algebras associated with \textit{arbitrary} $\ca$-correspondences. In spite of its importance, this construction, and in particular the left action, is difficult to describe or explain its properties. In Section~\ref{secdilations} we give a very simple and natural picture for the dilation of an injective $\ca$-correspondence to an essential Hilbert bimodule. We emphasize that we do not claim originality here for proving the \textit{existence} of a dilation. What is new here is the picture that we provide.

  In Section \ref{section;mult} we include the applications of our theory to multivariable $\ca$-dynamics. As we mentioned earlier, in Theorem~\ref{multitail} we show that the Cuntz-Pimsner $\ca$-algebra of an arbitrary multivariable system is Morita equivalent to the Cuntz-Pimsner algebra of an injective one. In Theorem~\ref{injectivemulti}, we show that the Cuntz-Pimsner algebra of an \textit{injective} multivariable system is a crossed product $B\times_{\beta} \bbN$ of a $\ca$-algebra $B$ by an injective endomorphism $\beta$. Even though Theorem~\ref{injectivemulti} is proved by abstract means, we have good control on the algebra $B$. For instance, if $A$ is an AF $\ca$-algebra then the same holds for $B$ (and of course, if $A$ is commutative, then $B$ is an AH $\ca$-algebra, as in \cite{DavR}). We remark that the version of a crossed product by an endomorphism that we use in Theorem~\ref{injectivemulti}, even though closely related to that of Paschke~\cite{Pas}, seems to be new. This applies in particular in the case where the $\ca$-algebra $B$ in $B\times_{\beta} \bbN$ is non-unital. Given the examples in this paper, we believe that this crossed product is worthy of further investigation. In general, we anticipate that there will be additional applications for the results of this paper, that go beyond the theory of $\ca$-dynamics. We plan to pursue this in a future paper.

  \section{Preliminaries}

A $\ca$-correspondence $(X,\A,\phi_X)$ consists of a $\ca$-algebra $A$, a Hilbert $A$-module $X$ and a
(perhaps degenerate) $*$-homomorphism $\phi_X\colon A \rightarrow \L(X)$.

A (Toeplitz) representation $(\pi,t)$ of a $\ca$-correspondence into
a $\ca$-algebra $B$, is a pair of a $*$-homomorphism $\pi\colon A \rightarrow B$ and a linear map $t\colon X \rightarrow B$, such that
\begin{enumerate}
 \item $\pi(a)t(\xi)=t(\phi_X(a)(\xi))$,
 \item $t(\xi)^*t(\eta)=\pi(\sca{\xi,\eta}_X)$,
\end{enumerate}
for $a\in A$ and $\xi,\eta\in X$. An easy application of the $\ca$-identity shows that
\begin{itemize}
\item[(3)] $t(\xi)\pi(a)=t(\xi c)$
\end{itemize}
is also valid. A representation $(\pi , t)$ is said to be \textit{injective} iff $\pi$ is injective; in that case $t$ is an isometry.

The $\ca$-algebra generated by a representation $(\pi,t)$ equals the closed linear span of $t^n(\bar{\xi})t^m(\bar{\eta})^*$, where for simplicity $\bar{\xi}\equiv (\xi^{(1)},\dots,\xi^{(n)})\in X^n$ and $t^n(\bar{\xi})\equiv t(\xi_1)\dots t(\xi_n)$.
For any
representation $(\pi,t)$ there exists a $*$-homomorphism
$\psi_t:\K(X)\rightarrow B$, such that $\psi_t(\Theta^X_{\xi,\eta})=
t(\xi)t(\eta)^*$.

Let $J$ be an ideal in $\phi_X^{-1}(\K(X))$; we say that a
representation $(\pi,t)$ is $J$-coisometric if
\[
 \psi_t(\phi_X(a))=\pi(a), \text{ for any } a\in J.
\]
The representations $(\pi,t)$ that are $J_{X}$-coisometric, where
\[
 J_X=\ker\phi_X^\bot \cap \phi_X^{-1}(\K(X)),
\]
are called \emph{covariant representations}~\cite{Kats04}. We define the Cuntz-Pimsner-Toeplitz algebra $\T_X$ as the
universal $\ca$-algebra for all Toeplitz representations of $(X,A,\phi_X)$. Similarly, the Cuntz-Pimsner algebra $\O_X$ is the universal
$\ca$-algebra for all covariant representations of $(X,A,\phi_X)$.

A concrete presentation of both $\T_{X}$ and $\O_{X}$ can be given
in terms of the generalized Fock space $\F_{X}$ which we now
describe. The \emph{Fock space} $\F_{X}$ over the correspondence $X$
is the direct sum of the $X^{\otimes n}$ with the structure of a
direct sum of $\ca$-correspondences over $A$,
\[
\F_{X}= A \oplus X \oplus X^{\otimes 2} \oplus \dots .
\]
Given $\xi \in X$, the (left) creation operator $t_{\infty}(\xi) \in
\L(\F_{X})$ is defined as
\[
t_{\infty}(\xi)(a , \zeta_{1}, \zeta_{2}, \dots ) = (0, \xi a, \xi
\otimes \zeta_1, \xi \otimes \zeta_2, \dots),
\]
where $\zeta_n \in X^{\otimes n}$, $n \geq 0$ and $\zeta_0=a\in A$. (Here $X^{\otimes 0}\equiv A$, $X^{\otimes 1} \equiv X$ and $X^{\otimes n}= X
\otimes X^{\otimes n-1}$, for $n\geq 2$, where $\otimes$ denotes the interior tensor product with
respect to $\phi_X$.) For any $a \in A$,
we define $\pi_{\infty}(a)\in \L(\F_{X})$ to be the diagonal operator
with $\phi_X(a)\otimes id_{n-1}$ at its $X^{\otimes n}$-th entry. It
is easy to verify that $( \pi_{\infty}, t_{\infty})$ is a
representation of $X$ which is called the \emph{Fock representation}
of $X$. Fowler and Raeburn \cite{FR} (resp. Katsura \cite{Kats04})
have shown that the $\ca$-algebra $\ca( \pi_{\infty}, t_{\infty})$
(resp $\ca( \pi_{\infty}, t_{\infty})/ \K(\F_{X})J_{X}$) is
$*$-isomorphic to $\T_{X}$ (resp. $\O_{X}$).

\begin{definition}
The \emph{tensor algebra} $\T_{X}^+$ of a $\ca$-correspondence \break
$(X,A,\phi_X)$ is the norm-closed subalgebra of $\T_X$ generated by
all elements of the form $\pi_{\infty}(a), t_{\infty}^n(\bar{\xi})$, $a \in A$, $\bar{\xi} \in \X^n$, $n \in \bbN$.
\end{definition}

The tensor algebras for $\ca$-correspondences were pioneered by Muhly and Solel in \cite{MS}. They form a broad class of non-selfadjoint operator algebras which includes as special cases Peters' semicrossed products \cite{Pet2}, Popescu's non-commutative disc algebras \cite{Pop3}, the tensor algebras of graphs (introduced in \cite{MS} and further studied in \cite{KaK} and the tensor algebras for multivariable dynamics \cite{DavKat}, to mention but a few.

There is an important connection between $\T_X^+$ and $\O_X$ given in the following
theorem of \cite{KatsKribs06}. Recall that, for an operator algebra
$\fA$ and a completely isometric representation $\iota \colon \fA
\rightarrow A$, where $A=\ca(\iota(\fA))$, the pair $(A,\iota)$ is
called \emph{a $\ca$-cover for $\fA$}. The \emph{$\ca$-envelope of
the operator algebra $\fA$} is the universal $\ca$-cover $(A,\iota)$
such that, if $(B,\iota')$ is any other $\ca$-cover for $\fA$, then
there exists a (unique) $*$-epimorphism $\Phi:B \rightarrow A$, such
that $\Phi(\iota'(a))=\iota(a)$, for any $a\in \fA$ . For the
existence of the $\ca$-envelope see \cite{Ar06, DrMc05, Ham79}.

\begin{theorem} \label{KKenv}
\textup{\cite[Theorem 3.7]{KatsKribs06}.} The $\ca$-envelope of the
tensor algebra $\T^+_X$ is $\O_X$.
\end{theorem}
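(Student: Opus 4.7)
My plan is to verify the hypotheses of the Dritschel--McCullough / Hamana characterization of the $\ca$-envelope: (a) the natural map $\iota\colon \T_X^+ \hookrightarrow \T_X \twoheadrightarrow \O_X$ is completely isometric, exhibiting $\O_X$ as a $\ca$-cover of $\T_X^+$, and (b) $\iota$ is a maximal representation, so that $\ker(\T_X \twoheadrightarrow \O_X) = \K(\F_X)J_X$ coincides with the Shilov boundary ideal of $\T_X^+$ inside $\T_X$.

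For (a), Muhly--Solel already give that $\T_X^+$ sits completely isometrically in $\T_X$ via the Fock representation, so the task reduces to showing the quotient map is isometric on the image. Here I would exploit the gauge circle action $\{\gamma_z\}_{z\in \bbT}$ on $\O_X$ (inherited from $\T_X$). The Fourier projections $\Phi_n(y) = \frac{1}{2\pi}\int_0^{2\pi} e^{-in\theta}\gamma_{e^{i\theta}}(y)\,d\theta$ extract each homogeneous piece of an element of $\iota(\T_X^+)$; since the canonical embedding of $A$ into $\O_X$ is faithful and elements of $\T_X^+$ have only non-negative gauge degrees, a Ces\`aro-average argument at the matrix level forces $\|\iota^{(n)}(f)\| = \|f\|$ for all $f \in M_n(\T_X^+)$.

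For (b), take a u.c.p.\ map $\Phi\colon \O_X \to B(\H)$ that restricts to the identity on $\iota(\T_X^+)$ and show $\Phi$ must be a $*$-homomorphism. By the multiplicative-domain argument, the multiplicative domain of $\Phi$ already contains $\iota(\T_X^+)$. The key step is to exploit the Cuntz--Pimsner covariance relation $\psi_t(\phi_X(a)) = \pi(a)$ for $a \in J_X$: the right-hand side lies in $\iota(A) \subseteq \iota(\T_X^+)$, while the left-hand side is a norm limit of finite sums of products $t(\xi)t(\eta)^*$. Equality of these two expressions inside $\O_X$, combined with Kadison--Schwarz $\Phi(x^*x) \geq \Phi(x)^*\Phi(x)$, propagates membership in the multiplicative domain to the adjoint generators $t(\xi)^*$, and hence to all of $\O_X$.

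The main obstacle is (b). The tensor algebra $\T_X^+$ carries no adjoint information on its own, so maximality genuinely requires the covariance relations of $\O_X$, which package the adjoint data through the compact operators on $X$. This also explains the sharpness of the result: any proper quotient of $\T_X$ strictly containing $\K(\F_X)J_X$ would leave some non-covariant $t(\xi)^*$ free to dilate, destroying maximality, whereas $\O_X$ is precisely the smallest quotient that enforces full covariance and hence maximality.
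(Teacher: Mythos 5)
First, a point of comparison: the paper does not prove this statement at all --- it quotes it from \cite[Theorem 3.7]{KatsKribs06} --- and the proof there (echoed in Theorem~\ref{lmain} of this paper) runs along a different route from yours: one first settles the case of injective $\phi_X$, and then reduces the general case to it by adding a tail in the sense of Section~\ref{section non-injective case}, identifying the Shilov ideal of $\T_X^+$ inside $\T_X$ with $\K(\F_X)J_X$ by means of the gauge action and the classification of gauge-invariant ideals. Your plan --- exhibit $\O_X$ as a $\ca$-cover and then verify maximality (the unique extension property) --- is a legitimate alternative strategy in the Dritschel--McCullough framework, but both halves contain genuine gaps as written.

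In (a), the Ces\`aro argument does not close. Knowing that the quotient $q\colon \T_X\to\O_X$ is isometric on each homogeneous component (which already requires Katsura's nontrivial theorem that the universal covariant representation is injective on $A$) only yields $\|q(f)\|\le\|f\|$ via the Fej\'er means; the reverse inequality is exactly the issue, since the norm of a polynomial $\sum_{n\ge 0}f_n$ is not determined by the norms of its homogeneous pieces. The proofs in the literature instead construct an injective covariant representation containing the Fock representation as a compression to a co-invariant subspace, which gives $\|q(f)\|\ge \|f\|$ directly. In (b), the multiplicative-domain claim is only one-sided: since $t(\xi)^*t(\xi)=\pi(\sca{\xi,\xi})$ lies in the image of $\T_X^+$, you do get $\Phi(t(\xi)^*t(\xi))=\Phi(t(\xi))^*\Phi(t(\xi))$, but the other half, $\Phi(t(\xi)t(\xi)^*)=t(\xi)t(\xi)^*$, is precisely what must be proved, and Kadison--Schwarz gives $\Phi(t(\xi)t(\xi)^*)\ge t(\xi)t(\xi)^*$, the inequality in the unhelpful direction. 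More seriously, your propagation step is driven entirely by the covariance relation on $J_X$, but $J_X$ may well be $\{0\}$ (e.g.\ when $\phi_X$ is injective and $\phi_X(A)\cap\K(X)=\{0\}$, as for the correspondence giving Popescu's disc algebra on infinitely many generators); then $\O_X=\T_X$, the covariance relation is vacuous, and yet the theorem still asserts $\ca_e(\T_X^+)=\T_X$. So maximality cannot be extracted from covariance alone, and some additional input --- in the cited proof, the gauge-invariance of the Shilov ideal together with the structure of gauge-invariant ideals of $\T_X$ --- is indispensable.
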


 One of the fundamental examples in the theory of $\ca$-correspondences  are the $\ca$-algebras of directed graphs, which we now describe. (For more details see~\cite{Raeb}.)

Let $\G$ be a countable directed graph with vertex set $\G^{(0)}$, edge set
$\G^{(1)}$ and range and source maps $r$ and $s$ respectively.
A family of partial isometries $\{ L_e\}_{ e\in
\G^{(1)}}$ and projections $\{ L_p \}_{p\in \G^{(0)}}$ is said to obey
the Cuntz-Krieger relations associated with $\G$ if and only if
they satisfy
\[
(\dagger)  \left\{
\begin{array}{lll}
(1)  & L_p L_p = 0 & \mbox{$\forall\, p,q \in \G^{(0)}$, $ p \neq q$}  \\
(2) & L_{e}^{*}L_f = 0 & \mbox{$\forall\, e, f \in \G^{(1)}$, $e \neq f $}  \\
(3) & L_{e}^{*}L_e = L_{s(e)} & \mbox{$\forall\, e \in \G^{(1)}$}      \\
(4)  & L_e L_{e}^{*} \leq L_{r(e)} & \mbox{$\forall\, e \in \G^{(1)}$} \\
(5)  & \sum_{r(e)=p}\, L_e L_{e}^{*} = L_{p} & \mbox{$\forall\, p
\in \G^{(0)}$ with $|r^{-1}(p)|\neq 0 , \infty$}
\end{array}
\right.
\]
The relations $(\dagger)$ have been refined in a series of papers
by the Australian school and reached the above form in \cite{BHRS,
RS}. All refinements involved condition $(5)$ and as it stands
now, condition $(5)$ gives the equality requirement for
projections $L_{p}$ such that $p$ is not a source and receives
finitely many edges. (Indeed, otherwise condition $(5)$ would not
be a $\ca$-condition.)

It can been shown that there exists a universal $\ca$-algebra,
denoted as $\O_{\G}$, associated with the relations $(\dagger)$.
Indeed, one constructs a single family of partial isometries and
projections obeying $(\dagger)$. Then, $\O_{\G}$ is the
$\ca$-algebra generated by a `maximal' direct sum of such
families. It turns out that there $\O_{\G}$ is the Cuntz-Pimsner algebra of a certain $\ca$-correspondence~\cite{MS}. The associated Cuntz-Pimsner -Toeplitz algebra is the universal algebra for the first four relations in $(\dagger)$ and is denoted as $\T_{\G}$.

%%%%%%%%%%%%%%%%%%%%%%%%%%%%%%%%%%%%%%%%%%%%%%%%%%%%%%%%%%%%
%%%%%%%%%%%%%%%%%%%%%%%%%%%%%%%%%%%%%%%%%%%%%%%%%%%%%%%%%%%%%%%%%%%%%%%%%%%%%%%%%%
%%%%%%%%%%%%%%%%%%%%%%%%%%%%%%%%%%%%%%%%%%%%%%%%%%%%%%%%%%%%%%%%%%%%%%%%%%%%%%%%%%%%%%%%%%%%%%%%%%
%%%%%%%%%%%%%%%%%%%%%%%%%%%%%%%%%%%%%%%%%%%%%%%%%%%%%%%%%%%%%%%%%%%%%%%%%%%%%%%%%%%%%%%%%%%%%%%%%%%%%%%%%%%%%%%%%%%%%%%%%%

\section{A family of tails for $\ca$-correspondences} \label{section non-injective case}

In this section we offer new methods for "adding a tail" \begin{footnote}
{Perhaps "adding a graph of correspondences" would be a more accurate description here.} \end{footnote}
to a non-injective
$\ca$-correspondence. Previously the only such method available in the literature
was that of Muhly and
Tomforde in \cite{MuTom04}. It turns out that their method, when applied to specific examples, produces injective correspondences that may not share the properties of the initial (non-injective) correspondence. For instance, in Proposition~\ref{MTtail} we add the tail of \cite{MuTom04} to a correspondence coming from a $\ca$-dynamical system and we show that the resulting $\ca$-correspondence does not come
from any $\ca$-dynamical system. Additional motivation for studying the technique of "adding a tail" cames from the theory of tensor algebras, as we have explained in the introduction.

Let $\G$ be a connected, directed graph with a distinguished sink $p_0 \in \vrt$ and no sources. We assume that $\G$ is contractible at $p_0$. (See Section~\ref{appendix} for the definition and properties of contractible graphs.) By Theorem~\ref{contractiblethm}, there exists a unique infinite path $w_0 = e_1 e_2 e_3\dots$ ending at $p_0$, i.e., $r(w_0)=p_0$. Let $p_n\equiv s(e_n)$, $n \geq 1$.

Let $(A_p)_{p \in \vrt}$ be a family of $\ca$-algebras parameterized by the vertices of $\G$ so that $A_{p_{0}}=A$. For each $e \in \edg$, we now consider a full, right Hilbert $A_{s(e)}$~-~module $X_e$ and a $*$-homomorphism
\[
\phi_e \colon A_{r(e)} \longrightarrow \L(X_e)
\]
satisfying the following requirements.

For $e \neq e_1$, the homomorphism $\phi_e$ are required to be injective and map onto $\K(X_e)$, i.e., $\phi_e(A_{r(e)})= \K(X_e)$. Therefore, each $X_e$, $e \neq e_1$, is an $A_{r(e)}-A_{s(e)}$-equivalence bimodule, in the language of Rieffel.

For $e = e_1$, we require $\K(X_{e_1}) \subseteq \phi_{e_1} (A)$ and
\begin{equation} \label{inj}
J_{X}\subseteq \ker \phi_{e_1} \subseteq \left( \ker \phi_X \right)^{\perp}.
\end{equation}
In addition, there is also a \textit{linking condition}
\begin{equation} \label{linkin}
\phi_{e_1}^{-1}(\K(X_{e_1})) \subseteq \phi_X^{-1}(\K(X))
\end{equation}
required between the maps $\phi_{X}$ and $\phi_{e_1}$.

Let $T_0= c_0 (\, (A_p)_{p \in \vrtm})$ denote the $c_0$-sum of the family $(A_p)_{p \in \vrtm}$, where $\vrtm \equiv \vrt \backslash \{p_0\}$. Consider the set $c_{00}((X_e)_{e \in \edg}) \subseteq c_0 ((X_e)_{e \in \edg})$, consisting of sequences which are zero everywhere but on a finite number of entries. Equip $c_{00}((X_e)_{e \in \edg}) $ with a $T_0$-valued inner product defined by
\[
\sca{ u, v  }(p)= \sum_{s(e)=p} \, u^*_ev_e, \quad p \in \vrtm,
\]
for any $u, v \in c_{00}((X_e)_{e \in \edg})$. Let $T_1$ be the completion of $c_{00}((X_e)_{e \in \edg}) $ with respect to that inner product. Equip now $T_1$ with a right $T_0$~-~action, so that
\[
(ux)_e = u_ex_{s(e)}, \quad e \in \edg,
\]
for any $x \in T_0$, so that the pair $(T_1, T_0)$ becomes a right $T_0$-Hilbert module. The pair $(T_0, T_1)$ is the \textit{tail} for $(X, A, \phi_X)$.

To the $\ca$-correspondence $(X,A,\phi_X)$ and the data
\[
\tau\equiv \Big(\G, (X_e)_{e \in \edg}, (A_p)_{p\in \vrt}, (\phi_{e})_{e \in \edg} \Big),
\]
we now associate
\begin{align} \label{tau}
\begin{split}
\atau &\equiv A\oplus T_0  \\
\xtau &\equiv X \oplus  T_1.
\end{split}
\end{align}
Using the above, we view $\xtau$ as a $\atau$-Hilbert module with the standard right action and inner product for direct sums of Hilbert modules. We also define a left $\atau$-action
$\ptau:  \atau \rightarrow \L(\xtau)$ on $\xtau$ by setting
\[
\ptau(a, x \, )(\xi, u )= (\phi_X(a)\xi, v  ),
\]
where
\[
v_{e}=
\left\{ \begin{array}{ll}
   \phi_{e_1}(a)(u_{e_1}),& \mbox{if $e = e_1$} \\
   \phi_e (x_{r(e)})u_e, & \mbox{otherwise}
   \end{array}
\right.
\]
for $a \in A$, $\xi \in X$, $x \in T_0$ and $u \in T_1$.

\begin{remark} We make the following two conventions regarding our notation.
\begin{itemize}
\item[(i)] The families $A_{e_n}$, $X_{e_n}$ and $\phi_{e_n}$, $n \geq 0$, will be simply denoted as $A_n$, $X_n$ and $\phi_n$ respectively.

\item[(ii)] Elements of $T_0$ will usually be denoted by such letter symbols as $x, y, z$, while letter symbols like $u,v$ will be reserved for elements of $T_1$. Furthermore, if $e \in \edg$ and $u_e \in X_{e}$, then $u_e\chi_e$ will denote the element of $T_1$ which is equal to $u_e$ on the $e$-entry and $0$ anywhere else. A similar convention holds for the symbol $a\chi_p$, $a \in A_p$, $ p \in \vrtm$.
    \end{itemize}
\end{remark}

\begin{proposition}
With the previous notation, $\ptau:\atau \rightarrow \L(\xtau)$ is a well-defined injective
map.
Moreover, if $\phi$ is non-degenerate, then so
is $\ptau$.
\end{proposition}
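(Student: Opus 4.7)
The plan is to verify the four assertions in succession: that $\ptau(a,x)$ defines a genuine element of $\L(\xtau)$; that $\ptau$ preserves products and involutions; that $\ker\ptau = 0$; and finally that non-degeneracy transfers from $\phi_X$. The proof is essentially bookkeeping, with one substantive input (the linking condition (\ref{inj})) that is the whole reason for the injectivity claim.

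For the first step, I would observe that $\ptau(a,x)$ respects the direct-sum decomposition $\xtau = X \oplus T_1$. On the $X$-summand it acts as $\phi_X(a)$. On $T_1$ it acts coordinatewise: the $e_1$-entry by $\phi_{e_1}(a) \in \L(X_{e_1})$, and the $e$-entry for $e\neq e_1$ by $\phi_e(x_{r(e)}) \in \K(X_e) \subseteq \L(X_e)$. The coordinatewise action is clearly well-defined on $c_{00}((X_e)_{e\in\edg})$; to check it extends boundedly to the completion, one estimates
\[
\langle \ptau(a,x)u,\ptau(a,x)u\rangle(p) \;\leq\; \|(a,x)\|^2\,\langle u,u\rangle(p)\qquad(p\in\vrtm),
\]
using that $x\in c_0$. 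Adjointability follows since each block $\phi_e(x_{r(e)})$ and $\phi_{e_1}(a)$ is adjointable in the relevant $\L(X_e)$, and the adjoint is also a coordinatewise operator. That $\ptau$ is a $*$-homomorphism is then immediate from the corresponding properties of $\phi_X$ and each $\phi_e$, applied summand-wise.

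For injectivity, suppose $\ptau(a,x) = 0$. Restricting to the $X$-summand gives $\phi_X(a) = 0$. Testing the $T_1$-summand on elements $u_{e_1}\chi_{e_1}$ with $u_{e_1}\in X_{e_1}$ yields $\phi_{e_1}(a) = 0$. Hence $a \in \ker\phi_X \cap \ker\phi_{e_1}$; by the hypothesis (\ref{inj}), $\ker\phi_{e_1}\subseteq(\ker\phi_X)^\perp$, so $a \in \ker\phi_X\cap(\ker\phi_X)^\perp = \{0\}$. To see $x=0$, fix $p\in\vrtm$. Since $\G$ has no sources there exists $e\in\edg$ with $r(e)=p$, and $e\neq e_1$ because $r(e_1)=p_0\notin\vrtm$. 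Testing on $u_e\chi_e$ for arbitrary $u_e\in X_e$ gives $\phi_e(x_p)u_e = 0$, so $\phi_e(x_p) = 0$; since $\phi_e$ is injective for $e\neq e_1$, we conclude $x_p = 0$.

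For non-degeneracy, one shows $\ptau(\atau)\xtau$ is dense in $\xtau = X\oplus T_1$. The $X$-component is handled by the hypothesis that $\phi_X(A)X$ is dense in $X$. For $T_1$ it suffices to work coordinatewise. For $e = e_1$, the inclusion $\K(X_{e_1})\subseteq \phi_{e_1}(A)$ together with the standard fact that $\K(X_{e_1})X_{e_1}$ spans a dense subspace of $X_{e_1}$ gives density of $\phi_{e_1}(A)X_{e_1}$. For $e\neq e_1$, $\phi_e(A_{r(e)})X_e = \K(X_e)X_e$ is dense in $X_e$ by the same standard fact. The only real subtlety in the whole argument is verifying the bounded extension in step one; everything else is a direct consequence of the structural hypotheses imposed on the tail $\tau$.
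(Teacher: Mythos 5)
Your proof is correct and follows essentially the same route as the paper: the same coordinatewise verification of well-definedness (you spell out the bound that the paper delegates to Lance's Proposition 1.2), the identical injectivity argument via condition (\ref{inj}) and the injectivity of $\phi_e$ for $e\neq e_1$, and a density-of-range formulation of non-degeneracy that is equivalent to the paper's approximate-unit argument and rests on the same fact that every $\phi_e$ is non-degenerate.
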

\begin{proof}
In order to
show that $\ptau$ is well defined, we need to show that $\ptau(a, x)$ defines a bounded operator
on $X \oplus c_{00}((X_e)_{e \in \edg}) $, for any $a \in A$, $x \in T_0$. However, this follows easily from
\cite[Proposition 1.2]{Lan}.

We now verify that $\ptau$ is injective. If $(a,x) \in \ker \ptau$, then
\[
\ptau(a,x)(\xi, 0) = (\phi_X(a)\xi , 0)=0,
\]for any $\xi \in X$, and so $ a \in \ker \phi$. Hence,
\[
\ptau(a,x)(0 , u_1 \chi_{e_1}) = (0, \phi_1 (a)(u_1)\chi_{e_1}) = 0,
\]
for all $u_1 \in X_1$ and so $a \in \ker \phi_1 \subseteq (\ker \phi_X)^{\perp}$, by condition (\ref{inj}).
This implies that $a = 0$.

Similarly, for any $p \in \vrtm$, let $f \in \edg $ with $r(f) = p$. Then
\[
\ptau(a,x)(0 , u_f \chi_{f}) = (0, \phi_{p}(x_p)(u_f) \chi_{f}) = 0,
\]
for all $u_f \in X_f$. Since $f \neq e_1$, $\phi_{p}$ is injective and so $x_p=0$, for all $p \in \vrtm$, i.e.,
$(a , x)=0$ and so $\ptau$ is injective.

Now, assume that $\phi_X \colon A \rightarrow \L(X)$ is non-degenerate.
Equivalently, if $(a_{\mu})_{\mu\in M}$ is an approximate unit for $A$,
then $(\phi_X(a_{\mu}))_{\mu\in \M}$ is an approximate unit for $\L(X)$.
Let $(x_\gl)_{\gl \in \Lambda}$ be the approximate unit for $T_0$
such that $(x_{p, \gl})_{\gl \in \Lambda}$ is an
approximate unit for $A_p$, for any $p \in \vrtm$; then $((a_{\mu},x_\gl))_{(\mu,\gl)\in
\M \times \Lambda}$ is an approximate unit for $\atau$. Moreover,
\begin{align*}
\lim_{(\mu, \gl)} \ptau(a_{\mu}, x_\gl)(\xi, u)&= \lim_{(\mu, \gl)} \left(\phi_X(a_{\mu})(\xi),\big( \phi_e (x_{r(e), \gl})(u_e) \big)_{e \in \edg} \,\right) \\
&= \left(\xi, (u_e)_{e \in \edg}\right) \\
&= (\xi, u)
\end{align*}
since all $\phi_e$, $e \in \edg$, are non-degenerate. Thus $\ptau(a_{\mu},x_\gl)$ converges strictly to
$1_{\L(Y)}$.
\end{proof}

From now on, we fix an injective covariant representation $(\pi,t)$
of $(\xtau,\atau,\ptau)$ that admits a gauge action, say $\{\beta_z\}_{z\in
\bbT}$. We define $\fA$ to be the closed linear span of
$t^n(\bar{\xi}, 0)t^m(\bar{\eta},0)^*$, $n,m\geq0$. This is exactly the
subalgebra $\ca(\pi|_A,t|_X)$ of $\ca(\pi,t)$; indeed,
\begin{align*}
 \pi(a,0)t(\xi,0) =t(\ptau(a,0)(\xi,0))=t(\phi_X(a)\xi,0),
\end{align*}
for any $a \in A$ and $\xi \in X$.

Given any $\xi, \eta \in X$, we have that
$\psi_t(\Theta_{(\xi,0),(\eta,0)})=t(\xi,0)t(\eta,0)^*$
and so $\psi_t(k)\in \fA$, for any $k\in
\K(X)\subseteq \K(\xtau)$. Furthermore for any $a\in A$ satisfying
$\phi_X (a)=\lim_N \sum_n
\Theta_{w_n,\eta_n}$, we have
\begin{equation} \label{rel1}
\begin{split}
 (\phi_X (a),0) & =\lim_N \sum_{n}
 (\Theta_{w_n,\eta_n},0)=\\
 &= \lim_N \sum_n \Theta_{(w_n,0),(\eta_n,0)},
\end{split}
\end{equation}
and so $(\phi_X(a),0) \in \K(\xtau)$.

\begin{lemma} \label{JJinclusion}
$(\phi_1^{-1}(\K(X_1))+J_{X})\oplus 0
\subseteq J_{\xtau}$.
\end{lemma}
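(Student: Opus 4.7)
The plan is to verify the two defining conditions of $J_{X_\tau}=\ker\ptau^\perp\cap\ptau^{-1}(\K(\xtau))$ separately for an element of the form $(a,0)$ with $a\in \phi_1^{-1}(\K(X_1))+J_X$. The first is for free: the preceding proposition established that $\ptau$ is injective, so $\ker\ptau=0$ and therefore $\ker\ptau^\perp=\atau$. The entire task thus reduces to showing that $\ptau(a,0)\in\K(\xtau)$.

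Next I would unwind the definition of $\ptau$ on $\xtau=X\oplus T_1$. Because the $T_0$-component of $(a,0)$ is zero, the formula for $\ptau$ collapses to a ``block-diagonal'' operator: it equals $\phi_X(a)$ on the $X$-summand, equals $\phi_{e_1}(a)$ on the $X_{e_1}$-summand of $T_1$, and vanishes on every other $X_e$-summand of $T_1$. So compactness of $\ptau(a,0)$ amounts to compactness of $\phi_X(a)\in\L(X)$ together with the compactness of the operator on $T_1$ supported only in the $X_{e_1}$-summand and acting there as $\phi_{e_1}(a)$.

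To confirm both blocks are compact, write $a=a_1+a_2$ with $a_1\in\phi_1^{-1}(\K(X_1))$ and $a_2\in J_X$. For $a_2$: the inclusion $J_X\subseteq\phi_X^{-1}(\K(X))$ built into the definition of $J_X$ gives $\phi_X(a_2)\in\K(X)$, while the hypothesis \eqref{inj} forces $\phi_1(a_2)=0$. For $a_1$: by construction $\phi_1(a_1)\in\K(X_1)$, and the linking condition \eqref{linkin} yields $a_1\in\phi_X^{-1}(\K(X))$, so $\phi_X(a_1)\in\K(X)$. Summing, both $\phi_X(a)\in\K(X)$ and $\phi_1(a)\in\K(X_1)$.

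The last checkpoint — and the only mildly delicate one — is to transport these module compactness statements to $\K(\xtau)$. For the $X$-block, any rank-one approximation $\phi_X(a)=\lim\sum\Theta^X_{\xi_i,\eta_i}$ lifts to $\sum\Theta^{\xtau}_{(\xi_i,0),(\eta_i,0)}$, and so yields a compact operator on $\xtau$. For the $T_1$-block, one checks the rank-one identity $\Theta^{T_1}_{u\chi_{e_1},v\chi_{e_1}}$ acts as $\Theta^{X_{e_1}}_{u,v}$ on the $X_{e_1}$-summand and as $0$ on every other $X_e$ with $s(e)=s(e_1)$ (because these other entries of $u\chi_{e_1}$ and $v\chi_{e_1}$ vanish), and therefore any rank-one approximation of $\phi_1(a)$ in $\K(X_1)$ lifts through the assignment $u,v\mapsto u\chi_{e_1},v\chi_{e_1}$ to a compact operator on $T_1$. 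Adding these two lifts recovers $\ptau(a,0)$, proving $\ptau(a,0)\in\K(\xtau)$, and completing the argument.
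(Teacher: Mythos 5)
Your proof is correct and follows essentially the same route as the paper's: use injectivity of $\ptau$ to reduce $J_{\xtau}$ to $\ptau^{-1}(\K(\xtau))$, invoke (\ref{inj}) to kill the $X_{e_1}$-component for $a\in J_X$ and the linking condition (\ref{linkin}) to get $\phi_X(a)\in\K(X)$ for $a\in\phi_1^{-1}(\K(X_1))$, and then lift rank-one approximants to $\K(\xtau)$ as in relation (\ref{rel1}). If anything, you are slightly more explicit than the paper about the rank-one lifting $\Theta^{X_1}_{u,v}\mapsto\Theta_{(0,u\chi_{e_1}),(0,v\chi_{e_1})}$ needed for the $X_{e_1}$-block, a detail the paper only spells out later in the proof of Lemma \ref{tt*}.
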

\begin{proof}
 Since $\ptau$ is injective we have that $J_{\xtau}= \ptau^{-1}(\K(\xtau))$.
If $a\in J_X$, then (\ref{inj}) implies that $\ptau(a,0)=(\phi(a), 0)$. Since $\phi(a) \in \K(X)$, we have
from (\ref{rel1}) that $\ptau(a,0)\in \K(\xtau)$ and so $ J_{X}\oplus 0
\subseteq J_{\xtau}$.

Similarly, the linking condition (\ref{linkin}), together with relation (\ref{rel1}) implies that $\phi_1^{-1}(\K(X_1))\oplus 0
\subseteq J_{\xtau}$, and the conclusion follows.
\end{proof}

We now need several technical results.

\begin{lemma} \label{tt*}
Let $(\pi,t)$ be a covariant representation of $(\xtau,\atau,\ptau)$, let $\xi \in X$ and let $u_n, v_n\in X_n$, $n\in \bbN$. Then,
\begin{align*}
(i) \quad &t(0,u_1\chi_{e_1})t^* (0,v_1\chi_{e_1} ) \in \ca(\pi|_A,t|_X) \\
(ii) \quad  &t(0,u_1\chi_{e_1})t^* (0,v_n\chi_{e_n} )=0, \quad &n\geq 2 \\
(iii)\quad &t(0,u_m\chi_{e_m})t^* (0,v_n\chi_{e_n}) = \delta_{m,n} \pi(0, a_{n-1}  \chi_{p_{n-1}}) , \quad &m,n\geq 2\\
(iv) \quad &t(\xi,0)t^* (0,v\chi_{e_n})=0, &n\geq 1
    \end{align*}
where $a_{n-1} \in A_{n-1}$, $n \geq 2$, and $\delta_{m,n}$ denotes the Kronecker delta.
\end{lemma}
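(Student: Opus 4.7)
The strategy is to exploit the identity $t(\eta)t(\zeta)^* = \psi_t(\Theta^{\xtau}_{\eta,\zeta})$, compute each relevant rank-one operator on $\xtau = X\oplus T_1$ explicitly, and then either observe that it vanishes outright or recognize it as $\ptau(a)$ for some $a\in J_{\xtau}$, whereupon covariance delivers the identity $\psi_t(\Theta)=\pi(a)$.

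Parts (ii), (iv), and the off-diagonal subcase of (iii) all reduce to the vanishing of the relevant rank-one operator. For (iv), the right $\atau$-action on $\xtau$ satisfies $(\xi,0)\cdot(0,z)=0$ for every $z\in T_0$, so the rank-one operator is identically zero. For (ii) and the $m\neq n$ subcase of (iii), I compute
\[
\langle(0,v_n\chi_{e_n}),(\zeta,w)\rangle_{\atau}=(0,\,v_n^*w_{e_n}\,\chi_{p_n}),
\]
and then the right action of $(0,\cdot\,\chi_{p_n})$ on $(0,u_m\chi_{e_m})\in\xtau$ yields a non-zero $e$-entry only when $e=e_m$ and $s(e_m)=p_n$, i.e.\ only when $m=n$. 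Hence the rank-one vanishes whenever $m\neq n$.

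For part (i), the inclusion $\K(X_1)\subseteq\phi_1(A)$ provides $a\in A$ with $\phi_1(a)=u_1v_1^*$ (as an element of $\K(X_1)$). The linking condition~(\ref{linkin}) forces $\phi_X(a)\in\K(X)$, and Lemma~\ref{JJinclusion} places $(a,0)\in J_{\xtau}$. A direct calculation on $(\zeta,w)\in\xtau$ produces the pivotal operator identity
\[
\ptau(a,0) \;=\; (\phi_X(a),0) \;+\; \Theta^{\xtau}_{(0,u_1\chi_{e_1}),(0,v_1\chi_{e_1})}
\]
in $\L(\xtau)$. Applying $\psi_t$ and using covariance, $\pi(a,0)=\psi_t(\ptau(a,0))$, whence
\[
 t(0,u_1\chi_{e_1})t(0,v_1\chi_{e_1})^* \;=\; \pi(a,0) \;-\; \psi_t\big((\phi_X(a),0)\big).
\]
Both summands belong to $\fA$: the first is $\pi|_A(a)$, and by~(\ref{rel1}) the compact $(\phi_X(a),0)\in\K(\xtau)$ is a norm-limit of rank-ones $\Theta^{\xtau}_{(w_j,0),(\eta_j,0)}$, whose $\psi_t$-images manifestly lie in $\fA$.

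The remaining case, $m=n\geq 2$ in (iii), proceeds along the same lines. Since $\phi_n\colon A_{n-1}\to\K(X_n)$ is a $*$-isomorphism, I choose the unique $a_{n-1}\in A_{n-1}$ with $\phi_n(a_{n-1})=u_nv_n^*$. The main obstacle is to verify
\[
\ptau(0,a_{n-1}\chi_{p_{n-1}}) \;=\; \Theta^{\xtau}_{(0,u_n\chi_{e_n}),(0,v_n\chi_{e_n})}
\]
in $\L(\xtau)$: both operators annihilate $X$ and are supported on $X_f$-coordinates with $r(f)=p_{n-1}$, and agreement on the $e_n$-coordinate is built into the choice of $a_{n-1}$. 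What must be excluded is the presence of another edge $f\neq e_n$ with $r(f)=p_{n-1}$, on which $\ptau$ would contribute an extra non-zero $\phi_f(a_{n-1})$, since $\phi_f$ is injective for $f\neq e_1$. This is exactly where contractibility enters: combined with the ``no sources'' hypothesis, it forces $r^{-1}(p_{n-1})=\{e_n\}$ for every $n\geq 2$, for otherwise such an $f$ could be extended backward via the no-sources condition into a second infinite path ending at $p_0$, in violation of Theorem~\ref{contractiblethm}. With the identity in hand, $(0,a_{n-1}\chi_{p_{n-1}})\in J_{\xtau}$ (its image under $\ptau$ is compact) and covariance closes out (iii).
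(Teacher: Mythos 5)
Your proposal is correct and follows essentially the same route as the paper: each product $t(\cdot)t(\cdot)^*$ is rewritten as $\psi_t$ of a rank-one operator on $\xtau$, which is then either shown to vanish or identified with $\ptau$ of an element of $J_{\xtau}$ so that covariance applies. The one place you go beyond the paper is the diagonal case of (iii), where you explicitly justify the operator identity $\ptau(0,a_{n-1}\chi_{p_{n-1}})=\Theta_{(0,u_n\chi_{e_n}),(0,v_n\chi_{e_n})}$ by showing that contractibility together with the no-sources hypothesis forces $r^{-1}(p_{n-1})=\{e_n\}$ --- a genuine subtlety (since $\ptau(0,a_{n-1}\chi_{p_{n-1}})$ acts on every $X_f$ with $r(f)=p_{n-1}$, and $\phi_f$ is injective for $f\neq e_1$) that the paper's proof asserts without comment.
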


\begin{proof}
Let $a_0 \in A$ with $\phi_1(a_0)=\Theta_{u_1, v_1}$. By condition (\ref{linkin}), we have $\phi_X(a_0) \in \K(X)$ and so
\[
\ptau(a_0 , 0) = (\phi_X(a_0) , 0) +\Theta_{(0,u_1\chi_{e_1}) , (0,v_1\chi_{e_1})}
\in \K(\xtau).
\]
Hence, $(a_0,0) \in J_{\xtau}$ and so by covariance
\[
\pi(a_0 , 0) = \psi_{t}((\phi_X(a_0) , 0)) + t(0,u_1\chi_{e_1})t^* (0,v_1\chi_{e_1} )
\]
However, (\ref{rel1}) implies that $\psi_{t}((\phi_X(a_0) , 0))  \in \ca(\pi|_A,t|_X)$ and the proof of (i) is complete.

Parts (ii), (iii) for $m\neq n$, and (iv)  follow from the fact that
\[
\Theta_{(0,u_m\chi_{e_m}) , (0,v_n\chi_{e_n})}= \Theta_{(\xi,0) , (0,v_n\chi_{e_n})}=0.
\]
Finally, if $m=n \geq 1$, and $a_{n-1} \in A_{n-1}$ with $\phi_n(a_{n-1}) = \Theta_{u_n ,v_n}$, then
\[
\ptau( 0, a_{n-1}\chi_{p_{n-1}}) = \Theta_{(0,u_n\chi_{e_n} , (0,v_n\chi_{e_n})}.
\]
Hence, $(0, a_{n-1}\chi_{p_{n-1}} \in J_{\xtau}$ and so part (iii), for $m=n$, follows from the covariance
of $(\pi, t)$.
\end{proof}

Now, let $(x_\gl)_{\gl}$ be an approximate unit for $T_0$ such that
$(x_{p ,\gl})_{\gl}$ is an approximate unit for $A_p$, for all $p \in \vrtm$.
Then for any
$t^n(\bar{\xi},\bar{u})t^m(\bar{\eta},\bar{v})^* \in \O_{\xtau}$, we have that the
limit
\begin{align*}
\lim_{\lambda}\,& \pi(0,x_{\gl})t^n(\bar{\xi},\bar{u})t^m(\bar{\eta},\bar{v})^* = \\
\lim_{\gl}\, & \pi(0,x_{\gl})t(\xi^{(1)},u^{(1)})\dots t(\xi^{(n)},u^{(n)})
t^m(\bar{\eta},\bar{v})^* = \\
\lim_{\gl}\,& t\left(0,(0, \{\phi_e(x_{r(e), \gl})(u_e^{(1)})\}_{e\neq e_1})\right)t(\xi^{(2)},u^{(2)})\dots t(\xi^{(n)},u^{(n)}) t^m(\bar{\eta},\bar{v})^* =\\
& t\left(0,( 0, \{u^{(1)}_e\}_{e\neq e_1})\right)t(\xi^{(2)},u^{(2)})\dots t(\xi^{(n)},u^{(n)}) t^m(\bar{\eta},\bar{v})^*,
\end{align*}
exists. Therefore we obtain a projection $Q$ in the multiplier of $\O_{\xtau}$ defined by $(1-Q)F\equiv \lim_\gl \pi(0,x_\gl) F$, $F \in \O_{\xtau}$, and satisfying
\begin{equation} \label{5rel}
\begin{split}
\left\{
\begin{array}{ll}
\mbox{(i)}& Q\pi(a,x)Q=Q\pi(a,x)=\pi(a,x)Q=\pi(a,0) \\
\mbox{(ii)} &Qt(\xi,u)= t(\xi, u_{e_1}\chi_{e_1}) \\
\mbox{(iii)} & t(\xi,u)Q=t(\xi,0) \\
\mbox{(iv)} & Qt(\xi,0)=t(\xi,0)Q=t(\xi,0) \\
\mbox{(v)} & Q\ca(\pi|_A,t|_X)  Q=\ca(\pi|_A,t|_X),
\end{array}
\right.
\end{split}
\end{equation}
for all $a \in A$, $\xi \in X$, $x \in T_0$ and $u \in T_1$.
In the special case when $A$ is unital and $\phi(1_A)=1_A$ we can
replace $Q$ by $\pi(1_A,0)$ and get the same results.

\begin{lemma} \label{linking}
If $(\pi , t)$ is a covariant representation of $(\xtau, \atau, \ptau)$, then
\[
t(0, u_n\chi_{e_n})t(\xi, v)=t(0, u_n\chi_{e_n})t(0,v_{e_{n+1}}\chi_{e_{n+1}}),
\]
for all $u_n \in X_n$, $\xi \in X$ and $v \in T_1$.
\end{lemma}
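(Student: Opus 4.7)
The plan is to establish the identity by showing that $t(0, u_n \chi_{e_n})$ annihilates $t(\xi, \tilde v)$, where I set $\tilde v \equiv v - v_{e_{n+1}} \chi_{e_{n+1}}$ to be $v$ with its $e_{n+1}$-entry removed. Linearity of $t$ on $\xtau$ yields
\[
 t(\xi, v) \;=\; t(\xi, \tilde v) \;+\; t(0, v_{e_{n+1}} \chi_{e_{n+1}}),
\]
so once the annihilation is proved, left-multiplying by $t(0, u_n \chi_{e_n})$ gives the desired equality for $v \in c_{00}((X_e)_{e \in \edg})$, and the general case $v \in T_1$ follows by continuity of $t$ and density.

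For the annihilation, I first record the inner product
\[
 \sca{(0, u_n \chi_{e_n}),\, (0, u_n \chi_{e_n})}_{\xtau} \;=\; (0,\, \sca{u_n, u_n}_{X_n} \chi_{p_n}),
\]
which follows directly from the definition of the $T_0$-valued inner product on $T_1$ together with $s(e_n) = p_n$. Combining properties (2) and (1) of the representation $(\pi, t)$,
\[
 t(0, u_n \chi_{e_n})^* t(0, u_n \chi_{e_n}) t(\xi, \tilde v) \;=\; t\bigl(\ptau(0,\, \sca{u_n,u_n}_{X_n} \chi_{p_n})(\xi, \tilde v)\bigr).
\]
Unpacking the definition of $\ptau$, the $X$-component of $\ptau(0, \sca{u_n, u_n}_{X_n}\chi_{p_n})(\xi, \tilde v)$ vanishes because $\phi_X(0)=0$, while its $T_1$-component at an edge $e$ equals $\phi_e\bigl((\sca{u_n, u_n}_{X_n}\chi_{p_n})_{r(e)}\bigr)\,\tilde v_e$, which can be nonzero only when $r(e) = p_n$.

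At this point contractibility enters decisively: the uniqueness of the infinite path $w_0$ ending at $p_0$ forces $e_{n+1}$ to be the \emph{unique} edge of $\G$ with range $p_n$. Indeed, if a distinct edge $f$ satisfied $r(f) = p_n$, then the absence of sources would allow one to extend backwards from $s(f)$ to an infinite path and prepend it to $e_n e_{n-1} \cdots e_1$, producing a second infinite path ending at $p_0$ and violating uniqueness. Consequently only the $e_{n+1}$-entry can survive, but this entry vanishes because $\tilde v_{e_{n+1}} = 0$ by construction. Thus $\ptau(0,\sca{u_n,u_n}_{X_n}\chi_{p_n})(\xi, \tilde v) = 0$, which gives $\|t(0, u_n \chi_{e_n}) t(\xi, \tilde v)\|^2 = 0$ via the $\ca$-identity and completes the annihilation. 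The main obstacle I anticipate is recognizing that contractibility is doing exactly the right work here, namely collapsing the sum over edges with range $p_n$ down to the single edge $e_{n+1}$, which is then excised by the choice of $\tilde v$; everything else is a direct unwinding of the tail inner product and the left action $\ptau$.
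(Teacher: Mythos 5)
Your proof is correct, but it runs on a different engine from the paper's. The paper inserts an approximate unit $(a_\lambda)$ for $A_n$: it writes $t(0,u_n\chi_{e_n})=\lim_\lambda t(0,u_na_\lambda\chi_{e_n})=\lim_\lambda t(0,u_n\chi_{e_n})\pi(0,a_\lambda\chi_{p_n})$ using the right $T_0$-action and property (3) of a representation, pushes $\pi(0,a_\lambda\chi_{p_n})$ through $t(\xi,v)$ with property (1), observes that the left action of $(0,a_\lambda\chi_{p_n})$ retains only the $e_{n+1}$-entry, and finally removes the limit using non-degeneracy of $\phi_{e_{n+1}}$. You instead insert the exact element $\pi\bigl(0,\sca{u_n,u_n}\chi_{p_n}\bigr)=t(0,u_n\chi_{e_n})^*t(0,u_n\chi_{e_n})$ via property (2) and finish with the $\ca$-identity; this trades the approximate-unit limit for a norm computation and never needs non-degeneracy of $\phi_{e_{n+1}}$. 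Both arguments ultimately rest on the same structural fact --- that $e_{n+1}$ is the \emph{only} edge of $\G$ with range $p_n$, so that the left action of anything supported at $p_n$ sees only the $e_{n+1}$-coordinate --- which the paper uses silently in its third displayed line and which you justify explicitly from the uniqueness of the infinite path ending at $p_0$ (Theorem~\ref{contractiblethm}) together with the no-sources hypothesis; making that step visible is a genuine gain in transparency. A small remark: like the paper's argument, yours uses only the Toeplitz relations, so the covariance hypothesis is never actually invoked.
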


\begin{proof}
If $(a_{\lambda})_{\gl \in \Lambda}$ ia an approximate unit for $A_n$, then
\vspace{.1in}
\begin{align*}
t(0, u_n\chi_{e_n})t(\xi, v) &= \lim_{\gl} t(0, u_n \chi_{e_n})\pi(0, a_{\gl}\chi_{p_n}) t(\xi, v) \\
&=\lim_{\gl} t(0, u_n \chi_{e_n})t(\ptau(0, a_{\gl}\chi_{p_n})(\xi, v)) \\
&=\lim_{\gl} t(0, u_n \chi_{e_n}) t(0,\phi_n(c_{\gl})(v_{e_{n+1}})\chi_{e_{n+1}})\\
&=t(0, u_n \chi_{e_n})t(0,v_{e_{n+1}}\chi_{e_{n+1}})
\end{align*}
as desired
\end{proof}

\begin{lemma}
If $(\pi , t)$ is a covariant representation of $(\xtau, \atau, \ptau)$ and
$\bar{\xi} \in X^{m}$, $\bar{u} \in T_1^m$, $m \in \bbN$, then
\[
Qt^{m}(\bar{\xi}, \bar{u})=
\sum_{j}\, t^{m_j}(\bar{\eta_j},0)\prod_{k=1}^{l_j}t(0, u_{k,j}\chi_{e_k}),
\]
for appropriate $m_j, l_j \in \bbN$, $\bar{\eta_j} \in X^{ m_j}$ and $u_{k,j} \in X_k$.
\end{lemma}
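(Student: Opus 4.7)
My plan is to proceed by induction on $m$. For the base case $m=1$, I decompose $(\xi^{(1)}, u^{(1)}) = (\xi^{(1)}, 0) + (0, u^{(1)})$ and apply properties (ii) and (iv) of $Q$ from (\ref{5rel}) to obtain
\[
Qt(\xi^{(1)}, u^{(1)}) = t(\xi^{(1)}, 0) + t(0, u^{(1)}_{e_1}\chi_{e_1}),
\]
which is already of the required form, with $(m_j, l_j) = (1, 0)$ for the first summand and $(0, 1)$ for the second.

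For the inductive step I factor $t^m(\bar{\xi}, \bar{u}) = t^{m-1}(\bar{\xi}', \bar{u}') \cdot t(\xi^{(m)}, u^{(m)})$, apply the inductive hypothesis to $Qt^{m-1}(\bar{\xi}', \bar{u}')$, and then multiply each resulting summand
\[
S_j = t^{m_j}(\bar{\eta_j}, 0) \prod_{k=1}^{l_j} t(0, u_{k,j}\chi_{e_k})
\]
on the right by $t(\xi^{(m)}, 0) + t(0, u^{(m)})$. The analysis splits on whether $l_j \geq 1$ or $l_j = 0$. When $l_j \geq 1$ the last factor of $S_j$ is $t(0, u_{l_j, j}\chi_{e_{l_j}})$, so Lemma~\ref{linking} applies directly: with $\xi = \xi^{(m)}$, $v = 0$ it forces $S_j \cdot t(\xi^{(m)}, 0) = 0$, while with $\xi = 0$, $v = u^{(m)}$ it rewrites $S_j \cdot t(0, u^{(m)})$ as $S_j \cdot t(0, u^{(m)}_{e_{l_j + 1}}\chi_{e_{l_j + 1}})$, extending the tail by one more edge along the distinguished path $e_1 e_2 \dots$\,. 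When $l_j = 0$, the product $S_j \cdot t(\xi^{(m)}, 0) = t^{m_j + 1}((\bar{\eta_j}, \xi^{(m)}), 0)$ is already of the claimed form, and it only remains to treat $S_j \cdot t(0, u^{(m)})$ in this case.

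The one piece of genuine content is the auxiliary identity
\[
t(\eta, 0)\, t(0, u_e\chi_e) = 0 \quad \text{for all } e \neq e_1, \; \eta \in X, \; u_e \in X_e,
\]
which is the $X$-to-tail analogue of Lemma~\ref{linking}. I would establish it by computing $\|t(\eta, 0) t(0, u_e\chi_e)\|^2 = \|\pi(\sca{\eta,\eta}_X, 0) t(0, u_e\chi_e)\|$ and noting that by the definition of $\ptau$ we have $\ptau(\sca{\eta,\eta}_X, 0)(0, u_e\chi_e) = 0$ for every $e \neq e_1$, since the left action of elements of the form $(a, 0)$ on the tail is concentrated on the $e_1$-entry. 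Applied to the outstanding case, this collapses $S_j \cdot t(0, u^{(m)})$ to $t^{m_j}(\bar{\eta_j}, 0)\, t(0, u^{(m)}_{e_1}\chi_{e_1})$, of the claimed form with a single tail factor. Collecting all surviving terms closes the induction; the main obstacle is essentially isolating this interface identity, after which the argument is routine bookkeeping capturing the fact that the surviving tail factors must align with the unique infinite path leading to the sink $p_0$.
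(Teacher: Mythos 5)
Your proof is correct, and it is close in spirit to the paper's: both run an induction on $m$, both get the base case from part (ii) of (\ref{5rel}), and both use Lemma~\ref{linking} to propagate tail factors along the distinguished path. The structural difference is the direction of the induction. The paper peels off the \emph{leftmost} factor, writes $Qt(\xi',u') = t(\xi',0)+t(0,u'_{e_1}\chi_{e_1})$, reinserts $Q$ after $t(\xi',0)$ via part (iv) of (\ref{5rel}) to invoke the inductive hypothesis, and collapses the entire term beginning with $t(0,u'_{e_1}\chi_{e_1})$ into a pure tail product by iterating Lemma~\ref{linking}; no further interface identity is needed. You peel off the \emph{rightmost} factor, which forces you to analyze how each summand $S_j$ interacts with one new factor, and in the case $l_j=0$ you must justify the identity $t(\eta,0)\,t(0,u_e\chi_e)=0$ for $e\neq e_1$. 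Your verification of that identity via the $\ca$-identity and the fact that $\ptau(a,0)$ acts on the tail only through the $e_1$-entry is valid; note it also falls out of (\ref{5rel}) directly, since $t(\eta,0)t(0,u_e\chi_e)=t(\eta,0)Q\,t(0,u_e\chi_e)=t(\eta,0)t(0,(u_e\chi_e)_{e_1}\chi_{e_1})=0$ by parts (iv) and (ii). Either way, your bookkeeping in the two cases $l_j\geq 1$ and $l_j=0$ is sound (in the former, Lemma~\ref{linking} applied with $v=0$ kills $S_j t(\xi^{(m)},0)$ and with $v=u^{(m)}$ extends the tail by one edge), so the induction closes. The net effect is a proof with the same ingredients but one extra, correctly supplied, lemma at the $X$-to-tail interface.
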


\begin{proof}The proof follows by induction on $m$. If $m=1$ then by part~(ii) of (\ref{5rel}) we have
\begin{align*}
Qt(\xi , u)&= t(\xi, u_{e_1}\chi_{e_1}) \\
           &=t(\xi, 0) + t(0,  u_{e_1}\chi_{e_1})
\end{align*}
as desired.

Assume now that the result is true for all integers smaller or equal to $m$. Once again, by part (ii) of (\ref{5rel}) we have
\begin{align*}
Qt(\xi',u')t^{m}(\bar{\xi}, \bar{u}) &=Qt(\xi' , 0)t^{m}(\bar{\xi}, \bar{u}) +Qt(0,u'_{e_1}\chi_{e_1})t^{m}(\bar{\xi}, \bar{u})
\end{align*}
However by part (iv) of (\ref{5rel}) and the inductive hypothesis,
\begin{align*}
Qt(\xi' , 0)t^{m}(\bar{\xi}, \bar{u}) &=Qt(\xi' , 0)Qt^{m}(\bar{\xi},\bar{u}) \\
     &=Qt(\xi', 0)\sum_{j}\, t^{m_j}(\bar{\eta_j},0)\prod_{k=1}^{l_j}t(0, u_{k,j}\chi_{e_k})\\
     &=\sum_{j}\, t(\xi', 0)t^{m_j}(\bar{\eta_j},0)\prod_{k=1}^{l_j}t(0, u_{k,j}\chi_{e_k})
     \end{align*}
On the other hand, Lemma \ref{linking} shows that $Qt(0,u'_{e_1}\chi_{e_1})t^{m}(\bar{\xi}, \bar{u})$ also has the desired form and the conclusion follows.
\end{proof}

\begin{lemma}
If $(\pi , t)$ is a covariant representation of $(\xtau, \atau, \ptau)$, then
\[
Qt^{m}(\bar{\xi}, \bar{u})t^{n}(\bar{\eta}, \bar{v})^*Q \in  \ca(\pi|_A,t|_X),
\]
for all
$\bar{\xi} \in X^{m}$, $\bar{\eta} \in X^n$, $\bar{u} \in T_1^m$ and $\bar{v} \in T_!^n$, $m, n \in \bbN$.
\end{lemma}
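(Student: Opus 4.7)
The plan is to feed the previous lemma into both $Qt^m(\bar\xi,\bar u)$ and $Qt^n(\bar\eta,\bar v)$, so that the ``$X$-parts'' automatically sit in $\ca(\pi|_A,t|_X)$, and then to reduce the claim to showing that the purely tail products that remain also lie there. Expanding each $Q$-factor via the previous lemma and taking adjoints gives
\[
Qt^m(\bar\xi,\bar u)\,t^n(\bar\eta,\bar v)^*Q \;=\; \sum_{j,j'} t^{m_j}(\bar\eta_j,0)\,P_j\,(P'_{j'})^*\, t^{n_{j'}}(\bar\zeta_{j'},0)^*,
\]
where $P_j=\prod_{k=1}^{l_j}t(0,u_{k,j}\chi_{e_k})$ and $P'_{j'}$ is defined analogously. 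The outer factors $t^{m_j}(\bar\eta_j,0)$ and $t^{n_{j'}}(\bar\zeta_{j'},0)^*$ lie in $\ca(\pi|_A,t|_X)$ by definition, so the problem boils down to showing that every purely tail product
\[
P(P')^* \;=\; t(0,u_1\chi_{e_1})\cdots t(0,u_l\chi_{e_l})\, t(0,v_{l'}\chi_{e_{l'}})^* \cdots t(0,v_1\chi_{e_1})^*
\]
belongs to $\ca(\pi|_A,t|_X)$.

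I handle this by induction on $\min(l,l')$, always cancelling the innermost pair $t(0,u_l\chi_{e_l})t(0,v_{l'}\chi_{e_{l'}})^*$ using Lemma~\ref{tt*}. When $l\neq l'$, parts (ii)--(iii) (and the adjoint of (ii)) force the innermost pair to vanish, and hence $P(P')^*=0$. When $l=l'\geq 2$, part (iii) combined with the Rieffel-bimodule hypothesis $\phi_l(A_{l-1})=\K(X_l)$ furnishes some $a_{l-1}\in A_{l-1}$ satisfying $t(0,u_l\chi_{e_l})t(0,v_l\chi_{e_l})^*=\pi(0,a_{l-1}\chi_{p_{l-1}})$; the identity $t(\xi)\pi(a)=t(\xi a)$ together with the right-action formula $(ux)_e=u_ex_{s(e)}$ then absorbs this into the next layer via
\[
t(0,u_{l-1}\chi_{e_{l-1}})\,\pi(0,a_{l-1}\chi_{p_{l-1}}) \;=\; t\bigl(0,(u_{l-1}a_{l-1})\chi_{e_{l-1}}\bigr),
\]
shortening the strings to length $l-1$ and triggering the inductive hypothesis. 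The base case $l=l'=1$ is covered directly by Lemma~\ref{tt*}(i).

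The only delicate point is the bookkeeping in the inductive step: each cancellation at depth $k\geq 2$ modifies the next left-factor from $u_{k-1}$ to $u_{k-1}a_{k-1}$, and one must then again realise the resulting rank-one operator $\Theta_{u_{k-1}a_{k-1},v_{k-1}}\in\K(X_{k-1})$ as $\phi_{k-1}$ of some element of $A_{k-2}$. This is exactly why the hypothesis that each $X_e$ ($e\neq e_1$) is an $A_{r(e)}$--$A_{s(e)}$-equivalence bimodule is essential; with it in place the induction runs without further obstacle and the proof is complete.
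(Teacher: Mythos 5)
Your argument is correct and follows essentially the same route as the paper: both reduce via the preceding lemma to products of tail factors, use Lemma~\ref{tt*} to kill the mismatched-length case and to convert the innermost pair $t(0,u_l\chi_{e_l})t(0,v_l\chi_{e_l})^*$ into $\pi(0,a_{l-1}\chi_{p_{l-1}})$, and then telescope inward using $t(0,u_{l-1}\chi_{e_{l-1}})\pi(0,a_{l-1}\chi_{p_{l-1}})=t(0,u_{l-1}a_{l-1}\chi_{e_{l-1}})$ until Lemma~\ref{tt*}(i) finishes the base case. The ``delicate point'' you flag is exactly the surjectivity $\phi_e(A_{r(e)})=\K(X_e)$ built into the construction, so nothing further is needed.
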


\begin{proof} In light of the previous Lemma, it is enough to show that any product of the form
\[
t(\xi, 0)\prod_{k=1}^{l_1}t(0, u_{k}\chi_{e_k})\left(t(\eta, 0)\prod_{k=1}^{l_2}t(0, v_{k}\chi_{e_k})\right)^*
\]
belongs to $\ca(\pi|_A,t|_X)$.
If the above product is not 0, then Lemma \ref{tt*} implies that $l_1=l_2\equiv l$. If $l=1$ then the conclusion follows from Lemma \ref{tt*} (i).
If $l\geq 2$, then Lemma \ref{tt*} (iii) shows that
\[
t(0, u_l \chi_{e_l})t(0, v_l \chi_{e_l})^* =\pi(0, a_{l-1} \chi_{p_{l-1}})
\]
and so
\begin{align*}
t(0, u_{l-1} \chi_{e_{l-1}})&t(0, u_l \chi_{e_l})t(0, v_l \chi_{e_l})^*t(0, v_{l-1} \chi_{e_{l-1}})^*  \\ &=t(0, u_{l-1} \chi_{e_{l-1}})\pi(0, a_{l-1}  \chi_{p_{l-1}})t(0, v_{l-1} \chi_{e_{l-1}})^*
\\ &= t(0, u_{l-1}a_{l-1} \chi_{e_{l-1}})t(0, v_{l-1} \chi_{e_{l-1}})^*.
\end{align*}
Continuing in this fashion, the conclusion follows.
\end{proof}

\begin{corollary} \label{corner}
If $(\pi , t)$ is a covariant representation of $(\xtau, \atau, \ptau)$,
then $Q\ca (\pi, t)Q = \ca(\pi|_A,t|_X)$.
\end{corollary}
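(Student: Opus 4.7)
The plan is to prove the two containments separately, both of which fall out quickly from the preceding lemmas. For $\ca(\pi|_A, t|_X) \subseteq Q\ca(\pi, t)Q$: since $(\pi|_A, t|_X)$ factors through $(\pi, t)$, we have $\ca(\pi|_A, t|_X) \subseteq \ca(\pi, t)$, and then property (v) of (\ref{5rel}) immediately yields $\ca(\pi|_A, t|_X) = Q\ca(\pi|_A, t|_X)Q \subseteq Q\ca(\pi, t)Q$.

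For the reverse containment, I would use that $\ca(\pi, t)$ is the closed linear span of monomials of the form $\pi(a, x)\, t^{m}(\bar{\xi}, \bar{u})\, t^{n}(\bar{\eta}, \bar{v})^{*}$. By continuity it suffices to show that every such monomial, conjugated by $Q$, lies in $\ca(\pi|_A, t|_X)$. Using the covariance relation (3), a $\pi$-prefactor can be absorbed into the first $t$-entry whenever $m \geq 1$, which reduces the analysis to monomials of the form $t^{m}(\bar{\xi}, \bar{u})\, t^{n}(\bar{\eta}, \bar{v})^{*}$ or pure $\pi(a, x)$. The pure $\pi$ case is dispatched by (i) of (\ref{5rel}), which gives $Q\pi(a, x)Q = \pi(a, 0) \in \ca(\pi|_A, t|_X)$; the case $m, n \geq 1$ is precisely the statement of the preceding lemma; and the mixed cases $m \geq 1, n = 0$ (and the adjoint case) would be handled by expanding $Qt^{m}(\bar{\xi}, \bar{u})$ via the structural lemma as a sum of terms $t^{m_j}(\bar{\eta}_j, 0)\prod_{k=1}^{l_j} t(0, u_{k,j}\chi_{e_k})$, and observing that by (iii) of (\ref{5rel}) right-multiplication by $Q$ annihilates any trailing factor, $t(0, u_{k,j}\chi_{e_k}) Q = t(0, 0) = 0$, so that only the $l_j = 0$ summands $t^{m_j}(\bar{\eta}_j, 0) \in \ca(\pi|_A, t|_X)$ survive.

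I expect no substantive obstacle: the three preceding lemmas have already carried the technical burden, and this corollary is really just the $\ca$-algebraic packaging of the monomial-level assertion $Qt^{m}(\bar{\xi}, \bar{u})\, t^{n}(\bar{\eta}, \bar{v})^{*}Q \in \ca(\pi|_A, t|_X)$. The only bookkeeping point, namely the reduction of mixed $\pi$--$t$ monomials to the pure cases, is smoothed out by the two-sided $Q$-sandwich together with property (iii) of (\ref{5rel}).
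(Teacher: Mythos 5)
Your proof is correct and takes essentially the same route the paper intends: the corollary is stated without proof precisely because the preceding lemma already handles all generating monomials $t^{m}(\bar{\xi},\bar{u})t^{n}(\bar{\eta},\bar{v})^{*}$ (with $m=0$ or $n=0$ absorbing the $\pi$-factors) and property (v) of (\ref{5rel}) supplies the reverse inclusion, exactly as you argue. The only nit is a citation slip: absorbing a $\pi$-prefactor into the first $t$-entry uses relation (1), $\pi(a)t(\xi)=t(\phi_X(a)\xi)$, not relation (3).
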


The previous corollary shows that $\ca(\pi|_A,t|_X)$ is a corner of $\ca (\pi, t)$. The next Lemma will be used in the proof of the main result, Theorem \ref{non-injective case}, to show that this corner is actually full.

\begin{lemma} \label{full}
Let $(\pi , t)$ be a covariant representation of $(\xtau, \atau, \ptau)$ and let $\I$ be an ideal of $\ca (\pi , t)$.
\begin{itemize}
\item[$($i$)$] If $e \in\edg \backslash \{e_1\}$ and either $\pi(0, A_{r(e)}\chi_{r(e)}) \subseteq \I$ or $\pi(0, A_{s(e)}\chi_{s(e)}) \subseteq \I$, then
    \[
    t(0, X_e \chi_{e})\subseteq \I,
    \]
\item[(ii)] If $e \in \edg$ and $t(0 , X_e\chi_e) \subseteq \I$, then $\pi(0, A_{s(e)}) \subseteq \I$.
\item[(iii)]If $p \in \vrtm$ and
\[
t(0, X_e\chi_e)\subseteq \I, \mbox{ for all } e \in r^{-1}(p),
\]
then,
\[
\pi(0, A_p\chi_{p}) \subseteq \I
\]
\end{itemize}
\end{lemma}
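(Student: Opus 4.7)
My plan is to dispatch the three parts in the order (ii), (i), (iii), since the inner-product identity needed for (ii) is the cleanest and feeds directly into (i), while (iii) requires a covariance argument that I would set up last. For (ii), the representation axiom applied to $(0, u_e\chi_e)$ and $(0, v_e\chi_e)$ gives
\[
t(0, u_e\chi_e)^* t(0, v_e\chi_e) = \pi(0, \sca{u_e, v_e}_{X_e}\chi_{s(e)}) \in \I,
\]
and fullness of $X_e$ as a right $A_{s(e)}$-module then yields $\pi(0, A_{s(e)}\chi_{s(e)})\subseteq \I$.

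For (i), I would split on which hypothesis holds. If $\pi(0, A_{s(e)}\chi_{s(e)})\subseteq \I$, the same inner-product identity with $v_e = u_e$ puts $t(0, u_e\chi_e)^* t(0, u_e\chi_e)$ in $\I$, so hereditarity of closed ideals in a $\ca$-algebra forces $t(0, u_e\chi_e)\in \I$. If instead $\pi(0, A_{r(e)}\chi_{r(e)})\subseteq \I$, I would use
\[
\pi(0, a\chi_{r(e)})\, t(0, u_e\chi_e) = t\bigl(\ptau(0, a\chi_{r(e)})(0, u_e\chi_e)\bigr) = t(0, \phi_e(a)(u_e)\chi_e),
\]
valid for $a \in A_{r(e)}$, and exploit the standing hypothesis that $\phi_e(A_{r(e)}) = \K(X_e)$ (available because $e\neq e_1$): an approximate identity for $\K(X_e)$ drawn from $\phi_e(A_{r(e)})$ approximates $u_e$, placing $t(0, u_e\chi_e)$ in the closed ideal $\I$.

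Part (iii) is the heart of the matter. The target identity is
\[
\pi(0, a\chi_p) = \psi_t\bigl(\ptau(0, a\chi_p)\bigr) = \sum_{e \in r^{-1}(p)} \psi_t\bigl(\phi_e(a)|_{X_e}\bigr),
\]
which requires $(0, a\chi_p)\in J_{\xtau} = \ptau^{-1}(\K(\xtau))$, using injectivity of $\ptau$. From the definition, $\ptau(0, a\chi_p)$ vanishes on $X$ and on each $X_f$ with $f \notin r^{-1}(p)$, and acts as $\phi_e(a)\in \K(X_e)$ on the $e$-block for $e \in r^{-1}(p)$ (noting $e \neq e_1$ since $r(e_1) = p_0 \notin \vrtm$); each such block lifts to $\K(\xtau)$. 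The main technical obstacle is confirming that the sum of these blocks is genuinely compact in $\xtau$, which reduces to $r^{-1}(p)$ being finite---a condition secured in the intended applications by the contractibility assumption on $\G$, and the only non-algebraic ingredient. Once compactness is in hand, covariance delivers the displayed identity, each $\psi_t(\phi_e(a)|_{X_e})$ is a norm limit of finite sums $t(0, w\chi_e)t(0, z\chi_e)^*$ with $w, z \in X_e$ (all in $\I$ by hypothesis), and summing over the finite set $r^{-1}(p)$ gives $\pi(0, a\chi_p)\in \I$.
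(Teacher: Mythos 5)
Your proof is correct and follows essentially the same route as the paper: the inner-product identity $t(0,u_e\chi_e)^*t(0,u_e'\chi_e)=\pi(0,\sca{u_e,u_e'}\chi_{s(e)})$ plus fullness for (ii), the module actions of $A_{s(e)}$ and $A_{r(e)}$ (via approximate units and non-degeneracy of $\phi_e$) for (i), and membership of $(0,a_p\chi_p)$ in $J_{\xtau}$ followed by covariance for (iii). The only departures are cosmetic --- you invoke hereditarity ($x^*x\in\I\Rightarrow x\in\I$) where the paper multiplies by an approximate unit of $A_{s(e)}$, and you explicitly flag the finiteness of $r^{-1}(p)$ needed for compactness in (iii), a point the paper leaves implicit in the contractibility hypothesis.
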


\begin{proof} (i). Let $ u_e \in X_e$. Notice that if $(a_{e, \gl})_{\gl}$ is an approximate unit for $A_{s(e)}$, then
\begin{align*}
t(0,u_e\chi_e)&= \lim_{\gl} \, t(0,u_ea_{e, \gl}\chi_e) \\
                &= \lim_{\gl} \, t(0,u_e) \pi(0, a_{e, \gl}\chi_{s(e)})
\end{align*}
and the conclusion follows from the fact that $ \pi(0, a_{e, \gl}\chi_{s(e)}) \in \I$.

The proof of the other inclusion follows from the fact that $\phi_e$ is non-degenerate.

\noindent (ii). If $u_e, u_e' \in X_e$, then,
\[
t(0, u_e \chi_e)^*t(0, u_e' \chi_{e}^{\phantom{*}}) = \pi(0, \langle u_e^{\phantom{'}}, u_e'\rangle \chi_{s(e)})
\]
and the conclusion follows from the fact that $X_e$ is full.

\noindent(iii).  Let $a_p \in A_p$. For any $\xi \in X$ and $u \in T_1$, we have
\[
\ptau((0,a_p\chi_p))(\xi, u)=\big(0,\sum_{e \in r^{-1}(p)} \phi_e(a_p)(u_e)\, \chi_e \big).
\]
Since $\phi_e(a_p) \in \K(X_e)$, for all $ e \in r^{-1}(p)$, we obtain $(0, a_p \chi_p) \in J_{\xtau}$. By covariance,
\begin{equation} \label{span}
\pi(0,a_p \chi_p ) \in \overline{\spn} \{ t(0, u_e \chi_e)t(0, u_e' \chi_{e}^{\phantom{*}})^*\mid u_e^{\phantom{*}},u_e' \in X_e, e \in r^{-1}(p) \}.
\end{equation}
However, our assumptions imply that elements of the form $t(0, u_e \chi_e)$, $e \in r^{-1}(p)$, belong to $\I$. Hence, by (\ref{span}), we have $\pi(0,a_p \chi_p ) \in \I$, as desired.
\end{proof}
We have arrived to the main result of this section.

\begin{theorem}\label{non-injective case}
Let $(X,A,\phi_X)$ be a non-injective C*-correspondence and let $(\xtau, \atau, \ptau)$
 be the $\ca$-correspondence associated with the data
 \[
\tau = \Big(\G, (X_e)_{e \in \edg}, (A_p)_{p\in \vrt}, (\phi_{e})_{e \in \edg} \Big),
 \]
 as defined at the beginning of the section. Then $(\xtau,\atau,\ptau)$ is an injective $\ca$-correspondence and the Cuntz-Pimsner algebra $\O_X$ is a full corner of $\O_{\xtau}$.
\end{theorem}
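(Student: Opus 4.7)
Since the opening proposition of the section already establishes that $\ptau$ is injective---and hence that $(\xtau, \atau, \ptau)$ is an injective $\ca$-correspondence---what remains is the corner statement. The plan is to exploit the fixed injective covariant representation $(\pi, t)$ together with its gauge action, which realizes $\O_{\xtau}$ by Katsura's gauge-invariant uniqueness theorem, and then argue separately that $Q\O_{\xtau}Q \cong \O_X$ and that $Q$ is a full projection in the multiplier algebra of $\O_{\xtau}$.

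For the corner identification, first I would verify that $(\pi|_A, t|_X)$ is an injective covariant representation of $(X, A, \phi_X)$. Injectivity of $\pi|_A$ is inherited from $\pi$, and the Toeplitz relations restrict correctly. The only nontrivial point is covariance: for $a \in J_X$, Lemma~\ref{JJinclusion} places $(a, 0)$ in $J_{\xtau}$, while condition~(\ref{inj}) forces $\ptau(a, 0) = (\phi_X(a), 0)$. Expanding this compact operator on $\xtau$ via~(\ref{rel1}) yields $\psi_t(\ptau(a, 0)) = \psi_{t|_X}(\phi_X(a))$, and covariance of $(\pi, t)$ then delivers the required identity $\psi_{t|_X}(\phi_X(a)) = \pi|_A(a)$. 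Restricting the gauge action $\{\beta_z\}$ to $\ca(\pi|_A, t|_X)$ and invoking Katsura's gauge-invariant uniqueness theorem identifies $\ca(\pi|_A, t|_X) \cong \O_X$, so Corollary~\ref{corner} realizes $\O_X$ as the compression $Q\O_{\xtau}Q$.

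The heart of the argument is showing that $Q$ is full. Let $\I$ denote the closed two-sided ideal of $\O_{\xtau}$ generated by $Q$. Because $\pi(a,0) = Q\pi(a,0)$ and $t(\xi, 0) = Qt(\xi, 0)$ by~(\ref{5rel}), the entire subalgebra $\ca(\pi|_A, t|_X)$ lies in $\I$. The key bootstrapping step uses Lemma~\ref{tt*}(i): since $\K(X_1) \subseteq \phi_1(A)$, for any $u_1, v_1 \in X_1$ we can pick $a_0 \in A$ with $\phi_1(a_0) = \Theta_{u_1, v_1}$ and conclude
\[
t(0, u_1 \chi_{e_1}) t(0, v_1 \chi_{e_1})^* = \pi(a_0, 0) - \psi_t((\phi_X(a_0), 0)) \in \I.
\]
Right-multiplying by $t(0, v_1 \chi_{e_1})$ and using $t(0, v_1 \chi_{e_1})^* t(0, v_1 \chi_{e_1}) = \pi(0, \langle v_1, v_1\rangle \chi_{p_1})$ gives $t(0, u_1 \langle v_1, v_1\rangle \chi_{e_1}) \in \I$, and fullness of $X_1$ over $A_{p_1}$ (so that the inner products $\langle v_1, v_1\rangle$ generate an approximate unit for $A_{p_1}$) lets one pass to the limit to extract $t(0, X_1 \chi_{e_1}) \subseteq \I$.

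The remaining propagation along $\G$ is now essentially automatic. Lemma~\ref{full}(ii) upgrades $t(0, X_1 \chi_{e_1}) \subseteq \I$ to $\pi(0, A_{p_1} \chi_{p_1}) \subseteq \I$; Lemma~\ref{full}(i) then delivers $t(0, X_e \chi_e) \subseteq \I$ for every edge $e \neq e_1$ incident to $p_1$; and part (ii) or (iii) of the same lemma upgrades these to $\pi(0, A_p \chi_p) \subseteq \I$ at the neighbouring vertices. Since $\G$ is connected and contractible at $p_0$, iterating this back-and-forth produces $\pi(0, A_p \chi_p) \subseteq \I$ for every $p \in \vrtm$ and $t(0, X_e \chi_e) \subseteq \I$ for every $e \in \edg$; combined with the elements of $\ca(\pi|_A, t|_X)$ already in $\I$, this exhausts the generators of $\O_{\xtau}$, so $\I = \O_{\xtau}$ and $Q$ is full. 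I expect the one genuinely delicate step to be the bootstrapping argument: controlling the $e_1$-fiber of $T_1$ from the $A$-component rests on the precise combination of~(\ref{inj}),~(\ref{linkin}), and $\K(X_1) \subseteq \phi_1(A)$, which is exactly where the ``linking condition'' earns its name.
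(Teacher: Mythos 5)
Your proposal is correct and follows essentially the same route as the paper: restrict the universal covariant representation, identify $\ca(\pi|_A,t|_X)\cong\O_X$ by gauge invariance, invoke Corollary~\ref{corner} for the corner, and then propagate ideal membership through the graph via Lemma~\ref{full} and the contractibility/accessibility of $\G$. The only (harmless) deviation is your bootstrapping of $t(0,X_{e_1}\chi_{e_1})\subseteq\I$ via Lemma~\ref{tt*}(i) together with fullness of $X_1$, where the paper obtains the same inclusion more directly from non-degeneracy of $\phi_1$ by writing $t(0,u_1\chi_{e_1})=\pi(a_1,0)\,t(0,u_1'\chi_{e_1})$ with $\pi(a_1,0)\in\I$.
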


\begin{proof}
Let $(\pi,t)$ be the universal covariant representation of $(\xtau,\atau,\ptau)$.
It is easy to see that $(\pi|_A,t|_X)$ is an injective
representation of $(X,A,\phi)$ that inherits a gauge action from $(\pi,t)$.
Furthermore, $(\pi|_A,t|_X)$ is a covariant
representation of $(X,A,\phi)$.

Indeed, let $a\in J_X=\ker\phi_X^\bot\cap \phi_X^{-1}(\K(X))$ with $\phi_X(a)= \lim_N
\sum_{n} \Theta_{w_n,\eta_n}$.
Since $\ptau(a,0)=(\phi_X(a),0)$, we have
\[
\ptau(a,0)= \lim_N \sum_{n}
\Theta_{(w_n,0),(\eta_n,0)}.
\]
 Therefore,
\begin{align*}
 \psi_{t|_X}(\phi_X(a))& =\psi_{t|_X}(\lim_N \sum_{n}
 \Theta_{(w_n,0),(\eta_n,0)})
 = \psi_t(\lim_N \sum_{n}
 \Theta_{(w_n,0),(\eta_n,0)})=\\
 &= \psi_t(\ptau(a,0))=\pi(a,0)=\pi|_A(a).
\end{align*}
and the covariance of $(\pi|_A,t|_X)$ follows. Hence, by gauge invariance,
$\ca (\pi|_A,t|_X)$ is isomorphic to $\O_{\xtau}$.

We have seen in Corollary \ref{corner} that $Q\ca (\pi, t)Q = \ca(\pi|_A,t|_X)$, i.e., $\O_{X}$ is a corner of $\O_{\xtau}$. We now show $\O_{X}$ is a \textit{full} corner.
Assume that $\I$ is an ideal of $\ca (\pi, t)$ containing $\ca (\pi|_A,t|_X)$. We are to show that $\I = \ca (\pi, t)$. We start by showing
\begin{equation} \label{laststraw}
t(0, X_{e_1}\chi_{e_1})\subseteq \I.
\end{equation}
Let $u_1 \in \X_1$. Since $\phi_{1}$ is non-degenerate, there exist $a \in \A$ and $u_1' \in X_1$ so that $\phi_1(a_1)(u_1')=u_1$. Hence,
\begin{align*}
\I \ni \pi(a_1,0)t(0, u_1'\chi_{e_1})&= t(0,\phi_1(a_1)(u_1')\chi_{e_1})\\
&=t(0, u_1 \chi_{e_1})
\end{align*}
and the validity of (\ref{laststraw}) follows.

Repeated applications of Lemma~\ref{full}
show now that
\[
\pi(0, A_p \chi_{p}) \subseteq \I
\]
for all $p \in \bigcup_{n \in \bbN} \, S_n$, where $S_n $ are as in Section~\ref{appendix}. However, Theorem~\ref{contractiblethm} shows that $\bigcup_{n \in \bbN} \, S_n=\vrt$ and the conclusion follows.
\end{proof}

Lets see now how the work of Muhly and Tomforde fits in our theory.

\begin{example}[The Muhly-Tomforde tail~\cite{MuTom04}] \label{MTexample}
\textup{Given a (non-injective) correspondence $(X, A, \phi_X)$, Muhly and Tomforde construct in \cite{MuTom04} the tail that results from the previous construction, with respect to data
\[
\tau =\Big(\G, (X_e)_{e \in \edg}, (A_p)_{p\in \vrt}, (\phi_{e})_{e \in \edg} \Big)
\]
defined as follows.
The graph $\G$ is illustrated in the figure below.}

\vspace{.2in}
\[
\xymatrix{&{\bullet^{p_0}} &{\,\, \bullet}^{p_1} \ar[l]^{e_1}&{\,\,\bullet^{p_2}} \ar[l]^{e_2} &{\bullet^{p_3}} \ar[l]^{e_3}&{\,\,\bullet} \ar[l]&\dots \ar[l]}
\]
\vspace{.2in}

\noindent \textup{$A_p=X_e=ker \phi_X$, for all $p \in \vrtm$ and $e \in \edg$. Finally,
\[
\phi_e(a)u_e=au_e,
\]
for all $e \in \edg$, $u_e \in X_e$ and $a \in A_{r(e)}$.}
\end{example}

The tail of Muhly and Tomforde has had significant applications in the theory of $\ca$-correspondences, including a characterization for the $\ca$-envelope of the tensor algebra of a non-injective correspondence~\cite{KatsKribs06}. However, it also has its limitations, as we are about to see.

Let $(X_{\ga}, A)$ be the $\ca$-correspondence canonically associated with a $\ca$-dynamical system $(A, \ga)$, i.e., $X_{\ga}=A$ and $\phi_{X_{\ga}}(a_1)(a_2)=\ga(a_1)a_2$, for $a_1,a_2 \in A$; let $\O_{(A,\ga)}$ be the associated Cuntz-Pimsner $\ca$-algebra. If $\ga$ is not injective, then by using the Muhly-Tomforde tail we obtain an injective $\ca$-correspondence $(Y,B, \phi_Y)$ so that $\O_{(A,\ga)}$ is a full corner of $\O_Y$. Remarkably, $(Y, B, \phi_Y)$ may not come from any $\ca$-dynamical system, as the following result shows.

\begin{proposition} \label{MTtail}
Let $(A, \ga)$ be a $\ca$-dynamical system with $1\in A$ and $\ga(1)=1$. Assume that $\ker \ga \subseteq A$ is an essential ideal of $A$. Let $(X_{\ga}, A)$ the $\ca$-correspondence canonically associated with $(A, \ga)$ as described above. If $(Y, B, \phi_Y)$ denotes the $\ca$-correspondence resulting by adding the Muhly-Tomforde tail to $(X_{\ga}, A)$, then there exists no $*$-homomorphism $\beta$ on $B$ so that
\begin{equation} \label{impos}
\phi_Y(b)(b')=\beta(b)b'
\end{equation}
for all $b, b' \in B$
\end{proposition}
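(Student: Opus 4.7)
The plan is to pin down what a hypothetical $\beta$ would have to do on the first slot of the tail and derive a contradiction from the essentiality hypothesis on $\ker\ga$.

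First, I need an identification that makes the equation $\phi_Y(b)(b')=\beta(b)b'$ well posed, since a priori the left side lives in $Y$ and the right side in $B$. In the Muhly--Tomforde tail one has $X_{e_n}=A_{s(e_n)}=A_{p_n}=\ker\ga$ as right $\ker\ga$-Hilbert modules for every $n\ge 1$, so the map sending the $p_n$-fiber of $T_0$ to the $e_n$-fiber of $T_1$ by the identity on $\ker\ga$ is an isomorphism of right $T_0$-Hilbert modules, and hence extends to an isomorphism $\iota\colon B = A\oplus T_0 \longrightarrow A\oplus T_1 = Y$ of right $B$-Hilbert modules. The identity in the statement must then be read as $\phi_Y(b)\,\iota(b') = \iota(\beta(b)\,b')$, which is the only interpretation compatible with $(Y,B,\phi_Y)$ being the correspondence of a dynamical system.

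Now suppose such a $\beta$ exists, fix an arbitrary $a\in A$, and write $\beta((a,0)) = (c,z)\in A\oplus T_0$. I would test the identity on $b' = (0,\, x_{p_1}\chi_{p_1})$ with $x_{p_1}\in \ker\ga = A_{p_1}$. Directly from the definition of $\phi_Y$, the $e_1$-component of $\phi_Y((a,0))\,\iota(b')$ equals $\phi_{e_1}(a)(x_{p_1}) = a\,x_{p_1}$, while the $e_1$-component of $\iota(\beta((a,0))\,b')$ equals $z_{p_1}\,x_{p_1}$. Equating gives $(a - z_{p_1})\,x_{p_1} = 0$ for every $x_{p_1}\in\ker\ga$, and essentiality of $\ker\ga$ as an ideal of $A$ upgrades this to $a = z_{p_1}$. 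But $z_{p_1}\in A_{p_1} = \ker\ga$, so this forces $a\in\ker\ga$; specializing to $a = 1_A$ contradicts $\ga(1_A)=1_A\neq 0$.

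The main technical point is the identification $\iota\colon B\to Y$, which is straightforward but rests crucially on the Muhly--Tomforde choice $A_p = X_e = \ker\ga$. Once $\iota$ is in place the contradiction is essentially immediate: the tail's $p_1$-slot $A_{p_1}=\ker\ga$ is too small to absorb the left multiplication action of an arbitrary $a\in A$ on the $e_1$-slot of $T_1$, and essentiality ensures that no cancellation can rescue the failure.
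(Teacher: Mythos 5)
Your proof is correct and follows essentially the same route as the paper's: both test the identity on elements supported in the first tail slot, note that the $e_1$-component of $\phi_Y(b)(b')$ involves multiplication by an element of $A$ whereas the corresponding component of $\beta(b)b'$ only involves the $p_1$-component of $\beta(b)$, which lies in $A_{p_1}=\ker\ga$, and then use essentiality of $\ker\ga$ to force $1\in\ker\ga$, contradicting $\ga(1)=1$. The only (harmless) difference is that you make explicit the right-module identification $\iota\colon B\to Y$ that the paper leaves implicit by simply writing $Y=B=A\oplus c_0(\ker\ga)$, and you phrase the final step as vanishing of the annihilator of $\ker\ga$ rather than as the nonexistence of a unit for $\ker\ga$ inside $\ker\ga$.
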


\begin{proof}
For the correspondence $(X_{\ga}, \ga)$ described above, the Muhly-Tomforde tail produces an injective correspondence $(Y, B, \phi_Y)$ with
\begin{align*}
Y&= A\oplus c_{0}(\ker \ga) \\
B&=A\oplus c_{0}(\ker \ga)
\end{align*}
and $\phi_Y$ defined by
\[
\phi_Y\big( a, (c_i)_i\big)\big(a',(c_i')_i\big)
= \big(\ga(a)a',\ga(a)c_1', c_1c_2', c_2c_3',\dots\big),
\]
where $a,a' \in A$ and $(c_i)_i, (c_i')_i \in c_0(\ker \ga)$.

If there was a $*$-homomorphism $\beta$ satisfying (\ref{impos}), then by equating second coordinates in the the equation
\[
\phi_Y\big( 1, (c_i)_i\big)\big(a',(c_i')_i\big)=
\beta \big( 1, (c_i)_i\big)\big(a',(c_i')_i\big)
\]
we would obtain,
\[
c_1'=
\beta \big(1,(c_i)_i \big)_2 c_1',
\]
for all $c_1' \in \ker \ga$. Since $\ker \ga$ is an essential ideal, we have $\ker \ga \ni \beta \big(1,(c_i)_i \big)_2 =1$, a contradiction.
\end{proof}

Therefore, the Muhly-Tomforde tail produces an injective correspondence but not necessarily an injective dynamical system. Nevertheless, there exists a tail that can be added to $(X_{\ga}, A)$ and produce an injective correspondence that comes from a $\ca$-dynamical system. This is done in Example \ref{revisited}.

We finish this section with a discussion regarding Theorem~\ref{non-injective case} and the conditions imposed on the graph $\G$ and the maps $(\phi_e)_{e \in \edg}$.

We ask that the graph $\G$ be contractible. We cannot weaken this assumption to include more general graphs. Indeed, we want the tail associated with the data
 \[
\tau = \Big(\G, (X_e)_{e \in \edg}, (A_p)_{p\in \vrt}, (\phi_{e})_{e \in \edg} \Big),
 \]
to work with any possible Cuntz-Pimsner algebra $\O_X$ that can be ``added on''. This should apply in particular to the Cuntz-Krieger algebra $\O_{G_{p_0}}$ of the (trivial) graph $\G_{p_0}$ consisting only of one vertex $p_0$. By taking $\tau$ to be the ``usual'' tail associated with $\G$, i.e., $X_e=A_e=\bbC L_{p_0}$ and $\phi_e$ left multiplication for all $e$, we see that $\O_{G_{p_0}}$ is a full corner of $O_{\xtau}$ if and only if $\G$ is contractible at $p_0$.

Conditions~(\ref{inj}) and (\ref{linkin}) are also necessary, as the following result suggests.

\begin{proposition}
Let $(X,A,\phi_X)$ be a non-injective C*-correspondence and let
$(\xtau, \atau, \ptau)$ be the $\ca$-correspondence associated with
the data
 \[
\tau = \Big(\G, (X_e)_{e \in \edg}, (A_p)_{p\in \vrt}, (\phi_{e})_{e
\in \edg} \Big),
 \]
as defined at the beginning of the section. If $X_\tau$ is
injective,
\[
(\phi_1^{-1}(\K(X_1))+J_{X})\oplus 0 \subseteq
J_{\xtau},
\]
and the covariant representations of $X_\tau$ restrict
to covariant representations of $\phi_X$, then
\begin{equation*}
 J_{X}\subseteq \ker \phi_{1}
\subseteq \left( \ker \phi_X \right)^{\perp},
\end{equation*}
and the \textit{linking condition}
\begin{equation*}
\phi_{1}^{-1}(\K(X_{1})) \subseteq \phi_X^{-1}(\K(X))
\end{equation*}
holds.
\end{proposition}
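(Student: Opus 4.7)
The plan is to split the proposition into three statements---the two inclusions comprising condition~(\ref{inj}) and the linking condition~(\ref{linkin})---and to discharge each using one of the three hypotheses. Throughout, injectivity of $\ptau$ lets us replace the defining formula of $J_{\xtau}$ by the simpler identity $J_{\xtau}=\ptau^{-1}(\K(\xtau))$. To get $\ker\phi_1\subseteq(\ker\phi_X)^{\perp}$ from just injectivity of $\ptau$: given $a\in\ker\phi_1$ and $b\in\ker\phi_X$, I would inspect $\ptau(ab,0)$ and observe that both the $X$-block $\phi_X(ab)=\phi_X(a)\phi_X(b)$ and the only potentially nonzero $T_1$-block $\phi_1(ab)=\phi_1(a)\phi_1(b)$ vanish. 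Hence $\ptau(ab,0)=0$, and injectivity forces $ab=0$, so $a\in(\ker\phi_X)^{\perp}$.

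For the linking condition $\phi_1^{-1}(\K(X_1))\subseteq\phi_X^{-1}(\K(X))$ I would use hypothesis (b). If $a\in\phi_1^{-1}(\K(X_1))$, then $(a,0)\in J_{\xtau}=\ptau^{-1}(\K(\xtau))$, so $\ptau(a,0)\in\K(\xtau)$. With respect to the orthogonal decomposition $\xtau=X\oplus T_1$, the operator $\ptau(a,0)$ is block-diagonal with blocks $\phi_X(a)$ on $X$ and $\phi_1(a)$ on the $X_{e_1}$-summand of $T_1$ (and zero elsewhere on $T_1$). Since a diagonal block of a compact operator on a direct sum of Hilbert modules is itself compact, one concludes $\phi_X(a)\in\K(X)$. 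Running the same block argument on $J_X\oplus 0\subseteq J_{\xtau}$ (also given by (b)) yields as a by-product $\phi_1(a)\in\K(X_1)$ for every $a\in J_X$, which will be needed in the final step.

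The remaining inclusion $J_X\subseteq\ker\phi_1$ relies on hypothesis (c) and is the most delicate. I would pick a faithful covariant representation $(\pi,t)$ of $(\xtau,\atau,\ptau)$, which exists because $\xtau$ is injective (e.g.\ the gauge-invariant quotient of the Fock representation). Faithfulness of $\pi$ makes $t$ isometric and $\psi_t$ injective on $\K(\xtau)$. By (c), the restriction $(\pi|_A,t|_X)$ is a covariant representation of $X$. For $a\in J_X$, write $\phi_X(a)=\lim_N\sum_n\Theta^{X}_{w_n,\eta_n}$ and $\phi_1(a)=\lim_M\sum_m\Theta^{X_1}_{u_m,v_m}$ (both compact by the previous paragraph). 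The operator $\ptau(a,0)\in\K(\xtau)$ then decomposes as
\[
\ptau(a,0)=\lim_N\sum_n\Theta^{\xtau}_{(w_n,0),(\eta_n,0)}+\lim_M\sum_m\Theta^{\xtau}_{(0,u_m\chi_{e_1}),(0,v_m\chi_{e_1})}.
\]
Applying $\psi_t$ to both sides and using covariance of $(\pi,t)$ on the left together with covariance of $(\pi|_A,t|_X)$ on the first sum on the right, both reduce to $\pi(a,0)$; hence $\psi_t$ annihilates the second sum. Injectivity of $\psi_t$ on $\K(\xtau)$ then forces that sum to vanish in $\K(\xtau)$, but it equals $\phi_1(a)$ on the $e_1$-summand and zero elsewhere, so $\phi_1(a)=0$.

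The main obstacle I expect is the third step: the key technical manoeuvre is to decompose $\ptau(a,0)$ cleanly into pieces supported on $X$ and on $X_{e_1}$ and then to isolate the residual $X_{e_1}$-piece by subtracting off the contribution of the covariant representation of $X$. Once that is in place, faithfulness of $\psi_t$ on $\K(\xtau)$---a consequence of injectivity of $\pi$ and available because $X_\tau$ is injective and therefore admits a faithful covariant representation---completes the argument.
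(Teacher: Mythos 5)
Your proposal is correct and follows essentially the same route as the paper: the same product argument $\ptau(ab,0)=0$ for the inclusion $\ker\phi_1\subseteq(\ker\phi_X)^{\perp}$, the same isolation of the $X$-block of the compact operator $\ptau(a,0)$ for the linking condition, and the same mechanism (covariance of both the full representation and its restriction, plus injectivity of $\psi_t$) forcing $\ptau(a,0)=(\phi_X(a),0)$ and hence $J_X\subseteq\ker\phi_1$. Your third step merely spells out in detail the equivalence that the paper asserts in one line.
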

\begin{proof}
If $\ptau$ is injective, then every $\phi_e$ is injective for $e\neq
e_1$. For $a\in \ker\phi_{e_1}$ and $c\in \ker\phi_X$, we have that
\[
\ptau(ac,0)(\xi,u)=(\phi_X(ac)\xi, \phi_{1}(ac)(u_{e_1}),0)=0,
\]
so $ac\in \ker\ptau=(0)$. Thus $ \ker\phi_1 \subseteq \left( \ker
\phi_X \right)^{\perp}$.

Let $a \in \phi_{1}^{-1}(\K(X_{1}))$. If $(c_\gl)_\gl$ is an
approximate identity for $\K(X_{1})$, then the proof of Lemma \ref{JJinclusion} implies that,
\begin{align*}
\ptau(a,0)= (\phi_X(a),0) + \lim_\gl \Theta_{(0, \theta(a)\chi_{e_1}),
(0,c_\gl\chi_{e_1})}.
\end{align*}
Since $\ptau(a,0) \in \K(\xtau)$, we have that $(\phi_X(a),0)$ is an adjointable operator in $\L(X) \cap \K(\xtau)$. Thus $\phi_X(a) \in \K(X)$.

Finally, the assumption that the covariant representations of
$X_\tau$ restrict to covariant representations of $\phi_X$, along
with the hypothesis that $J_X \subseteq J_{\xtau}$ is equivalent to
$\phi_\tau(a,0)=(\phi_X(a),0)$, for all $a\in J_X$. This implies $J_{X}\subseteq \ker \phi_{1}$ and we are done.
\end{proof}

%%%%%%%%%%%%%%%%%%%%%%%%%%%%%%%%%%%%%%%%%%%%%%
\section{Cuntz-Pimsner algebras for multivariable $\ca$-dynamics} \label{section;mult}

We now apply the theory of the previous section to multivariable
 $\ca$-dynamics. Apart from their own merit, these applications will also address the necessity of using more elaborate tails than that of Muhly and Tomforde in the process of adding tails to $\ca$-correspondences. This necessity has been already noted in Proposition~\ref{MTtail}.

A \textit{multivariable $\ca$-dynamical system}
is a pair $( A, \ga)$ consisting of
a $\ca$-algebra $A$ along with
a tuple $\ga=( \ga_1, \ga_2 , \dots, \ga_n)$, $n \in \bbN$, of $*$-endomorphisms of $A$. The dynamical system is called injective iff
$\cap_{i=1}^n \, \ker\ga_i =\{0\}$.

 To the multivariable system $( A, \ga)$ we associate a $\ca$-correspondence $(X_{\ga}, A, \phi_{\ga})$ as follows. Let $X_{\ga}=A^n = \oplus_{i=1}^n A$ be the usual right $A$-module. That is
\begin{enumerate}
\item $(a_1,\dots,a_n)\cdot a= (a_1 a ,\dots,a_n a)$,
\item $\sca{(a_1,\dots,a_n), (b_1,\dots,b_n)}=\sum_{i=1}^n
\sca{a_i,b_i}=\sum_{i=1}^n a_i^*b_i$.
\end{enumerate}
Also, by defining the $*$-homomorphism
\begin{align*}
\phi_{\ga}\colon A \longrightarrow \L(X_{\ga})\colon a \longmapsto \oplus_{i=1}^n \ga_i(a),
\end{align*}
$X$ becomes a $\ca$-correspondence over $A$, with $\ker\phi_{\ga}=
\cap_{i=1}^n \ker \ga_i$ and $\phi(A) \subseteq \K(X_{\ga})$. It is
easy to check that in the case where $A$ and all $\ga_i$ are unital, $X$ is finitely generated as an
 $A$-module by the elements
\[
e_1:=(1,0,\dots,0),e_2:=(0,1,\dots,0),\dots,e_n:=(0,0,\dots,1),
\]
where $1\equiv 1_A$. In that case, $(\pi,t)$ is a representation of this
$\ca$-correspondence if, and only if, the $t(\xi_i)$'s are
isometries with pairwise orthogonal ranges and
\[
\pi(c)t(\xi)=t(\xi)\pi(\ga_i(c)), \quad i=1,\dots, n.
\]

\begin{definition}
The Cuntz-Pimsner algebra $\O_{(A, \ga)}$ of a multivariable $\ca$-dynamical system
$(A,\ga)$ is the Cuntz-Pimsner algebra of the
$\ca$-correspondence $(X_{\ga}, A, \phi_{\ga})$ constructed as above
\end{definition}

In the $\ca$-algebra literature, the algebras $\O_{(A, \ga)}$ are denoted as \break $A \times_{\ga} \O_n$ and go by the name "twisted tensor products by $\O_n$". They were first introduced and studied by Cuntz~\cite{Cun} in 1981. In the non-selfadjoint literature, there algebras are much more recent. In \cite{DavKat} Davidson and the second named author introduced the tensor algebra $\T_{(A, \ga)}$ for a  multivariable dynamical system $(A,\ga)$. It turns out that $\T_{(A,\ga)}$ is completely isometrically isomorphic to the tensor algebra for the $\ca$-correspondence $(X_{\ga}, A, \phi_{\ga})$. As such, $\O_{(A, \ga)}$ is the $\ca$-envelope of $\T_{(A, \ga)}$. Therefore, $\O_{(A, \ga)}$ provides a very important invariant for the study of isomorphisms between the tensor algebras $\T_{(A, \ga)}$.

We now apply the construction of Section \ref{section non-injective case}
to the
$\ca$-correspondence defined above. The graph $\G$ that we associate with $(X_{\ga}, A, \phi_{\ga})$ has no loop edges and a single sink $p_0$. All vertices in $\vrt \backslash \{p_0\}$ emit $n$ edges, i.e., as many as the maps involved in the multivariable system, and receive exactly one. In the case where $n=2$, the following figure illustrates $\G$.
\vspace{.2in}
\[
\xygraph{
!{<0cm,0cm>;<2.15cm,0cm>:<0cm,1cm>::}
!{(1,3)}*+{\empty}="a"
!{(2,2)}*+{\bullet^{p_3}}="b"
!{(1,1)}*+{\bullet^{q_3}}="c"
!{(3,1)}*+{\bullet^{p_2}}="cc"
!{(0.5,.5)}*+{\empty}="e"
!{(1.5,.5)}*+{\ddots}="f"
!{(4,0)}*+{\bullet^{p_1}}="j"
!{(5,-1)}*+{\bullet^{q_1}}="q"
!{(6,-2)}*+{\empty}="u"
!{(7,-3)}*+{\empty}="bb"
!{(3,-1)}*+{\bullet^{p_0}}="p"
!{(4,-2)}*+{\bullet}="w"
!{(3.5,-2.5)}*+{\empty}="v"
!{(4.5,-2.5)}*+{\empty}="y"
!{(2,0)}*+{\bullet^{q_2}}="i"
!{(1.5,-.5)}*+{\empty}="k"
!{(2.5,-.5)}*+{\ddots}="l"
"b" :"cc"^{e_3}
"cc":"j"^{e_2}
"j":"q"
"j":"p"_{e_1}
"a":"b"
"q":"w"
"q":"u"
"cc":"i"
"b":"c"
"c":"e" "c":"f"
"i":"k"
"i":"l"
"w":"v"
"w":"y"
}
\]
\vspace{.2in}
Clearly, $\G$ is $p_0$-accessible. (In the case $n=2$, $S_0 =\{p_0\}$,
\[S_1=\{  p_0, p_1, p_2, \dots\}\cup \{q_1, q_2, \dots \}
\]
and so on.) There is also a unique infinite path $w$ ending at $p_0$ and so the requirements of Theorem~\ref{contractiblethm} are satisfied, i.e., $\G$ is contractible at $p_0$.

Let $\J \equiv \cap_{i=1}^{n} \, \ker \alpha_i$ and let $M(\J)$ be the multiplier algebra of $\J$. Let $\theta \colon A \longrightarrow M(\J)$ the map that extends the natural inclusion $\J \subseteq M(\J))$. Let $X_e= A_{s(e)} =\theta(A)$, for all $e \in \edg$, and consider $(X_e, A_{s(e)})$ with the natural structure that makes it into a right Hilbert module.

For $e \in \edg \backslash \{e_1\}$ we define $\phi_{e}(a)$ as left multiplication by $a$. With that left action, clearly $X_e$ becomes an $A_{r(e)}-A_{s(e)}$-equivalence bimodule. For $e = e_1$, it is easy to see that
\[
\phi_{e_1}(a)(\theta(b))\equiv \theta(ab), \quad a,b \in A
\]
defines a left action on $X_{e_1}=\theta(A)$, which satisfies both (\ref{inj}) and (\ref{linkin}).

 For the $\ca$-correspondence $(X_{\ga}, A, \phi_{\ga})$ and the data
  \[
 \tau = \Big(\G, (X_e)_{e \in \edg}, (A_p)_{p\in \vrt}, (\phi_{e})_{e \in \edg} \Big),
   \]
   we now let $((X_{\ga})_{\tau} , A_{\tau}, (\phi_{\ga})_{\tau}) $ be the $\ca$-correspondence constructed as in the previous section. For notational simplicity $((X_{\ga})_{\tau} , A_{\tau}, (\phi_{\ga})_{\tau}) $ will be denoted as $(\xtau, \atau, \ptau)$. Therefore
\begin{align*}
\atau &= A\oplus c_0 (\vrtm, \theta(A))  \\
\xtau &= A^n \oplus  c_0 (\G^{(1)} ,\theta(A)).
\end{align*}
Now label the n-edges of $\G$ emitting from each $p \in \vrtm$ as $p^{(1)}, p^{(2)}, \dots,p^{(n)}$, with the convention that that if $e_{n}, p_n$, $n\geq0$, are as in Section \ref{section non-injective case}, then $e_n$ is labeled as $p_n^{(1)}$. It is easy to see now that the mapping
\[
c_0 (\G^{(1)},\theta(A)) \ni u \longmapsto \oplus_{i=1}^n \{ u(p^{(i)})\}_{p \in \vrt} \in \oplus_{i=1}^n \, c_0 (\vrtm,\theta(A))
\]
establishes a unitary equivalence
\begin{align*}
\xtau &=  A^n \oplus  c_0 (\G^{(1)} ,\theta(A)) \\
        &\cong  \oplus_{i=1}^n \, \left( A\oplus c_0 (\vrtm,\theta(A)\right)
\end{align*}
between the Hilbert $A$-module $\xtau$ and the $n$-fold direct sum of the $\ca$-algebra
$ A\oplus c_0 (\vrtm,\theta(A))$, equipped with the usual $A \oplus c_0 (\vrtm,\theta(A))$-right action and inner product.

It only remains to show that the left action on $\xtau$ comes from an $n$-tuple of $*$-endomorphisms of
$A\oplus c_0 (\vrtm,\theta(A))$. This is established as follows.

For any $i=1, 2, \dots, n$ and  $(a, x) \in A\oplus c_0 (\vrtm,\theta(A))$ we define
\begin{equation*}
\hat{\ga}_i (a, x)=( \ga_i (a), \gamma_i(a,x))
\end{equation*}
where $\gamma_i(a,x) \in c_0 (\vrtm,\theta(A))$ with
\[
\gamma_i(a,x)(p)=
\left\{
\begin{array}{ll}
\theta(a), &\mbox{if } p^{(i)}=e_0, \\
x(r(p^{(i)})), &\mbox{otherwise.}
\end{array}
\right.
\]
It is easy to see now that $\left( A\oplus c_0 (\vrtm,\theta(A)), \hat{\ga}_1, \dots,\hat{\ga}_n \right)$ is a multivariable dynamical system, so that the $\ca$-correspondence associated with it is unitarily equivalent to
$(\xtau, \atau, \ptau)$.

We have therefore proved

\begin{theorem} \label{multitail}
If $(A, \ga)$ is a non-injective multivariable $\ca$-dynamical system, then there exists an injective multivariable $\ca$-dynamical system $(B, \beta)$ so that the associated Cuntz-Pimsner algebras $\O_{(A, \ga)}$ is a full corner of
$\O_{(B, \beta)}$. Moreover, if $A$ belongs to a class
$\, \C$ of $\, \ca$-algebras which is invariant under quotients and $c_0$-sums, then $B\in \C$ as well. Furthermore, if $(A, \ga)$ is non-degenerate, then so is $(B, \beta)$.
\end{theorem}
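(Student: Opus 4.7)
The plan is to apply Theorem~\ref{non-injective case} to the $\ca$-correspondence $(X_\ga, A, \phi_\ga)$ together with the tail data $\tau$ constructed above, and then to reinterpret the resulting injective correspondence $(\xtau, \atau, \ptau)$ as coming from a new multivariable dynamical system. First, I would verify the hypotheses of Theorem~\ref{non-injective case}: the contractibility of $\G$ at $p_0$ has already been noted; for $e \neq e_1$ the map $\phi_e$ is left multiplication of $\theta(A)$ on itself, which is injective with image $\K(X_e)$; for $e = e_1$ the formula $\phi_{e_1}(a)\theta(b) = \theta(ab)$ gives $\K(X_{e_1}) = \phi_{e_1}(A)$. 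Since $\theta$ extends the inclusion $\J \hookrightarrow M(\J)$ one checks $\ker\phi_{e_1} = (\ker\phi_\ga)^\perp$, whence condition~(\ref{inj}) is immediate (recall $J_X \subseteq (\ker\phi_\ga)^\perp$ by definition). The linking condition~(\ref{linkin}) is automatic since $\phi_\ga(A) \subseteq \K(X_\ga)$.

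Next I would invoke the unitary equivalence essentially laid out in the text to identify $(\xtau, \atau, \ptau)$ with $(X_{\hat\ga}, B, \phi_{\hat\ga})$, where $B = A \oplus c_0(\vrtm, \theta(A))$ and $\hat\ga = (\hat\ga_1, \dots, \hat\ga_n)$ are as defined just before the theorem. The Hilbert-module part of this identification follows by grouping the $n$ edges out of each vertex; the main bookkeeping obstacle is to verify that the left action $\ptau$ matches $\phi_{\hat\ga}$ coordinate by coordinate, which I would do by checking the formulas on each edge-type separately. Theorem~\ref{non-injective case} then delivers $\O_{(A, \ga)} = \O_{X_\ga}$ as a full corner of $\O_{\xtau} \cong \O_{(B, \hat\ga)}$.

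To finish, I would verify the three auxiliary claims. For injectivity of $(B, \hat\ga)$, suppose $(a, x) \in \bigcap_{i=1}^n \ker\hat\ga_i$; then $a \in \J$ and $\gamma_i(a,x) = 0$ for all $i$. Since $\G$ has no sources and only $e_1$ targets $p_0$, every $q \in \vrtm$ arises as $r(p^{(i)})$ for some edge $p^{(i)} \neq e_1$, which forces $x(q) = 0$; then $\theta(a) = \gamma_1(a, 0)(p_1) = 0$, and since $\theta|_\J$ is the canonical inclusion we conclude $a = 0$. The class $\C$ assertion is immediate: $\theta(A) \cong A/\ker\theta$ is a quotient of $A$ and hence lies in $\C$, so both $c_0(\vrtm, \theta(A))$ and $B = A \oplus c_0(\vrtm, \theta(A))$ lie in $\C$. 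Non-degeneracy of $(B, \hat\ga)$ follows from the non-degeneracy of $\ptau$ established in the proposition following its definition, transported across the unitary equivalence.
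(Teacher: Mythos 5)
Your proposal follows the same route as the paper: it applies Theorem~\ref{non-injective case} to the tail data $\tau$ built from the graph with $n$ edges out of each non-sink vertex, with $X_e = A_{s(e)} = \theta(A)$ for $\theta\colon A \to M(\J)$, and then identifies $(\xtau,\atau,\ptau)$ with the correspondence of the system $(A \oplus c_0(\vrtm,\theta(A)), \hat\ga_1,\dots,\hat\ga_n)$ via the stated unitary equivalence. Your verifications of conditions (\ref{inj}) and (\ref{linkin}) (using $\ker\theta = \J^\perp$ and $\phi_\ga(A)\subseteq\K(X_\ga)$), and your explicit checks of injectivity, the class-$\C$ claim, and non-degeneracy, are correct and simply make explicit details the paper leaves to the reader.
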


\begin{example} \textup{(The Muhly-Tomforde tail, revisited). Theorem~\ref{multitail} shows how to correct the situation in Example~\ref{MTexample} in order to avoid the pathology of Proposition~\ref{MTtail}.}

  \textup{If $(X_{\ga}, A)$ is the $\ca$-correspondence canonically associated with a $\ca$-dynamical system $(A, \ga)$, i.e., $X_{\ga}=A$ and $\phi_{X_{\ga}}(a_1)(a_2)=\ga(a_1)a_2$, for $a_1,a_2 \in A$, then the appropriate tail for $(X_{\ga}, A)$ comes from the data
  \[
\tau =\Big(\G, (X_e)_{e \in \edg}, (A_p)_{p\in \vrt}, (\phi_{e})_{e \in \edg} \Big)
\]
where $\G$ is as in Example~\ref{MTexample} \textit{but} for any $p \in \vrtm$ and $e \in \edg$,
\[A_p=X_e= \theta(A),
\]
where $\theta \colon A \longrightarrow M(\ker \ga)$ is the map that extends the natural inclusion $\ker \ga \subseteq M(\ker \ga))$ in the multiplier algebra. Finally
\[
\phi_e(a)u_e=au_e,
\]
for all $e \in \edg$, $u_e \in X_e$ and $a \in A_{r(e)}$.}
\end{example}

The reader familiar with the work of Davidson and Roydor may have noticed that the the arguments in the proof of Theorem~\ref{multitail}, when applied to multivariable systems over commutative $\ca$-algebras produce a tail which is different from that of Davidson and Roydor in \cite[Theorem 4.1]{DavR}. It turns out that the proof of \cite[Theorem 4.1]{DavR} contains an error and the technique of Davidson and Roydor does not produce a full corner, as claimed in \cite{DavR}. We illustrate this by examining their arguments in the following simple case.

\begin{example} \label{revisited} \textup{ (The Davidson-Roydor tail~\cite{DavR}). Let $\X\equiv\{u ,v \}$ and consider the maps $\sigma_i \colon  \X \rightarrow \X$, $i=1,2$, with $\sigma_i(u) =v$ and $\sigma_i(v)=v$. Set $\sigma \equiv (\sigma_1 , \sigma_2)$ and let $\O_{(X, \sigma)}$ be the Cuntz-Pimsner algebra associated with the multivariable system $(\X, \sigma)$, which by \cite{DavKat} is the $\ca$-envelope of the associate tensor algebra.}

\textup{We now follow the arguments of \cite{DavR}. In order to obtain $\O_{(\X, \sigma)}$ as a full corner of an injective Cuntz-Pimsner algebra, Davidson and Roydor add a tail to the multivariable system. They define
\[
T= \{(u,k)\mid k<0\} \mbox{ and }\X^{T}= \X\cup T.
\]
For each $1\leq i \leq2$, they extend $\sigma_i$ to a map $\gs_i^T\colon \X^T \rightarrow \X^T$ by
\[\gs^T(u,k)= (u, k+1) \mbox{ for $k<-1$ and  } \sigma_i^T(u,-1)=u.
\]
They then consider the new multivariable system $(X^T, \sigma^T)$ and its associated Cuntz-Pimsner algebra $\O{(X^T,\gs^T)}$.}

\textup{It is easy to see that the Cuntz-Pimsner algebra $\O_{(\X, \sigma)}$ for the multivariable system $(\X, \sigma)$ is the Cuntz-Krieger algebra $\O_{\G}$ of the graph $\G$ illustrated below,}

\vspace{.2in}
\[
\xymatrix{
& {\bullet^v} \ar@(u,ul)[] \ar@(d,dl)[] &{\, \bullet^u \,} \ar@/^/[l] \ar@/_/[l] }
\]
\vspace{.2in}

 \noindent \textup{while the Cuntz-Pimsner algebra $\O{(X^T,\gs^T)}$ is isomorphic to the Cuntz-Krieger algebra $\O_{\G^T}$ of the following graph $\G^{T}$, where for simplicity we write $u_k$ instead of $(u,k)$, $k<0$,}

\vspace{.2in}
\[
\xymatrix{
& {\bullet^v} \ar@(u,ul)[] \ar@(d,dl)[] &{\, \bullet^{u } \,} \ar@/^/[l] \ar@/_/[l] &{\bullet^{u_{-1}}\,}\ar@/^/[l]^f \ar@/_/[l]_e  &{\bullet^{u_{-2}}\,}\ar@/^/[l] \ar@/_/[l] &{\dots}\ar@/^/[l] \ar@/_/[l]}
\]
\vspace{.2in}

\textup{In \cite[page 344]{DavR}, it is claimed that the projection P associated with the characteristic function of $\X \subseteq \X^T$ satisfies $P\O_{(\X^T, \sigma^T)}P= \O_{(\X, \sigma)}$ and so $\O_{(\X, \sigma)}$ is a corner of $\O_{(\X^T, \sigma^T)}$.
In our setting, this claim translates as follows: if $P=L_{u}+L_{v}$, then $P\O_{\G^{T}}P=\O_{\G}$. However this is not true. For instance, $P(L_f L_e^*)P=L_f L_e^* \notin \O_{\G}$.}
\end{example}

We now describe the Cuntz-Pimsner algebra of an injective multivariable system as a crossed product of a $\ca$-algebra $B$ by an endomorphism $\beta$. We begin with the pertinent definitions.

\begin{definition} \label{covariant}
Let $B$ be a (not necessary unital) $\ca$-algebra and let $\beta$ be an injective endomorphism of $B$. A \textit{covariant} representation $(\pi, v)$ of the dynamical system $(B, \beta)$ consists of a (perhaps degenerate) $*$-representation $\pi$ of $B$ and an isometry $v$ satisfying
\begin{itemize}
\item[(i)] $\pi(\beta(b))=v\pi(b)v^*, \forall \, b \in B$, i.e., $v$ \textit{implements} $\beta$,
\item[(ii)] $v^*\pi(B)v\subseteq \pi(B)$, i.e., v is \textit{normalizing} for $\pi(B)$,
\item[(iii)] $v^k(v^*)^k \pi(B) \subseteq \pi(B), \forall \, k \in \bbN.$
    \end{itemize}
    \end{definition}

The crossed product $B \times_{\beta} \bbN$ is the universal $\ca$-algebra associated with this concept of a covariant representation for $(B, \beta)$. Specifically, $B \times_{\beta} \bbN$ is generated by $B$ and $VB$, where $V$ is an isometry satisfying (i), (ii) and (iii) in Definition \ref{covariant} with $\pi=\id$.
Furthermore, for any covariant representation $(\pi, v)$ of $(B, \beta)$, there exists a $*$-homomorphism $\hat{\pi}\colon B \times_{\beta} \bbN \rightarrow B(\H)$
extending $\pi$ and satisfying $\hat{\pi}(bV) = \pi(b)v$, for all $b \in B$.

In the case where $B$ is unital and $\pi$ non-degenerate, condition (iii) is redundant and this version of a crossed product by an endomorphism was introduced by Paschke~\cite{Pas}; in the generality presented here, it is new. It has the advantage \footnote{One here needs to observe that conditions (i), (ii) and (iii) in Definition~\ref{covariant} imply that $\hat{\pi}\left(B \times_{\beta} \bbN \right)$ is generated by polynomials of the form $\pi(b_0) + \sum_{k}\, \pi(b_k ) v^k +\sum_{l} \,(v^*)^l \pi( b_l')$,  \, $b_0 , b_k , b_l \in B. $  } that for any covariant representation of $(\pi, v)$ of $(B, \beta)$ admitting a gauge action, the fixed point algebra of $(\pi, v)$ equals $\pi(B)$. This allows us to claim a gauge invariance uniqueness theorem for $B \times_{\beta} \bbN$: if $(\pi, v)$ is a faithful covariant representation of $(B, \beta)$ admitting a gauge action, then the $\ca$-algebra generated by $\pi(B)$ and $\pi(B)v$ is isomorphic to $B \times_{\beta} \bbN$.

There is a related concept of a crossed product by an endomorphism which we now discuss. For a $\ca$-algebra $B$ and an injective endomorphism $\beta$, Stacey \cite{Stac} imposes on a covariant representation $(\pi, v)$ of $(B, \beta)$ only condition (i) from Definition~\ref{covariant}. He then defines the crossed product $B \rtimes_{\beta} \bbN$ to be the universal $\ca$-algebra associated with his concept of a covariant representation for $(B, \beta)$. Muhly and Solel have shown~\cite{MS2} that in the case where $B$ is unital, Stacey's crossed product is the Cuntz-Pimsner algebra of a certain correspondence. Using a gauge invariance uniqueness theorem one can prove that if the isometry V in $B \rtimes_{\beta} \bbN$ satisfies condition (ii) in Definition~\ref{covariant}, then $B \rtimes_{\beta} \bbN \simeq  B \times_{\beta} \bbN$.

In the case where $A$ is a commutative $\ca$ algebra, the following result was proven in \cite{DavR} by using Gelfand theory to construct a new multivariable dynamical system. It turns out that the concept of a faithful covariant representation suffices to prove the result for arbitrary $\ca$-algebras.

\begin{theorem} \label{injectivemulti}
If $(A, \ga)$ is an injective multivariable system, then there exists a $\ca$-algebra $B$ and an injective endomorphism $\beta$ of $B$ so that $\O_{(A, \ga)}$ is isomorphic to the crossed product algebra $B\times_{\beta}\bbN$. Furthermore, if $A$ belongs to a class $\C$  which is invariant under direct limits and tensoring by $M_k(\bbC)$, $k\in \bbN$, then $B$ also belongs to $\C$.
\end{theorem}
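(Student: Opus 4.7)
The plan is to realise $\O_{(A,\ga)}$ as the crossed product of its gauge-invariant fixed-point subalgebra by the endomorphism implemented by the isometry attached to the first coordinate direction of $X_\ga = A^n$.

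Let $(\pi,t)$ be the universal covariant representation of $(X_\ga, A, \phi_\ga)$ and set $t_i := t(e_i)$, where in the unital case $e_i = (0,\dots,1_A,\dots,0)$ and in general the $t_i$ are obtained as strict limits along an approximate unit in the multiplier algebra. Injectivity of $(A,\ga)$ gives $J_{X_\ga}=A$, so covariance supplies the reconstruction formula
\[
\pi(a) \;=\; \sum_{i=1}^{n} t_i\,\pi(\ga_i(a))\,t_i^{*}, \qquad a\in A.
\]
I define $B_k$ to be the closed linear span of $\{t_\mu\,\pi(a)\,t_\nu^{*} : |\mu|=|\nu|=k,\;a\in A\}$; the reconstruction formula forces $B_{k-1}\subseteq B_k$, and I set $B:=\overline{\bigcup_k B_k}$, which is the fixed-point algebra of the gauge action of $\bbT$ on $\O_{(A,\ga)}$. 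The orthogonality relations $t_i^{*}t_j=\delta_{ij}\pi(1)$ together with faithfulness of $\pi$ identify $B_k\cong M_{n^{k}}(A)$ via $e_{\mu\nu}\otimes a\mapsto t_\mu\pi(a)t_\nu^{*}$, and the connecting maps $M_{n^{k}}(A)\hookrightarrow M_{n^{k+1}}(A)$ act entrywise as $a\mapsto\diag(\ga_1(a),\dots,\ga_n(a))$. The hypothesis $\bigcap_i\ker\ga_i=(0)$ is precisely the condition that these connecting maps be injective, so $B\cong\varinjlim_{k}M_{n^{k}}(A)$; this settles the class-closure statement at once.

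Next, set $V:=t_1$ and $\beta(b):=VbV^{*}$. From $V^{*}V=1$ it follows that $\beta$ is a $*$-homomorphism and is injective, and a direct computation gives $\beta(t_\mu\pi(a)t_\nu^{*}) = t_{1\mu}\pi(a)t_{1\nu}^{*}\in B_{k+1}$, so $\beta$ restricts to an injective endomorphism of $B$. I then verify the three conditions of Definition~\ref{covariant} for the pair $(\id_B,V)$: condition (i) is the definition of $\beta$; condition (ii) follows from $V^{*}(t_\mu\pi(a)t_\nu^{*})V = \delta_{\mu_1,1}\delta_{\nu_1,1}\,t_{(\mu_2,\dots,\mu_k)}\pi(a)t_{(\nu_2,\dots,\nu_k)}^{*}\in B_{k-1}$; and condition (iii) holds because $V^{k}(V^{*})^{k}=t_1^{k}(t_1^{*})^{k}$ belongs to $B_k\subseteq B$ (respectively $M(B)$ in the non-unital case), so left multiplication by it preserves $B$.

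Universality of $B\times_\beta\bbN$ then yields a $*$-homomorphism $\Phi:B\times_\beta\bbN\to\O_{(A,\ga)}$ extending the embedding $B\hookrightarrow\O_{(A,\ga)}$ and sending the canonical isometry to $V$. Surjectivity is immediate from $t_i = (t_it_1^{*})\,t_1\in B\cdot V\subseteq\operatorname{im}\Phi$ for each $i$, so $\operatorname{im}\Phi$ contains every generator of $\O_{(A,\ga)}$. For injectivity, the representation $(\id_B,V)$ is faithful and admits the gauge action of $\bbT$ inherited from $\O_{(A,\ga)}$ (fixing $B$ pointwise and scaling $V$ by $z$); the gauge-invariance uniqueness theorem for $B\times_\beta\bbN$ recorded in the paragraph preceding the statement then forces $\Phi$ to be an isomorphism. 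The principal technical nuisance is the non-unital case, where $V$ lives only in $M(\O_{(A,\ga)})$ and the manipulations in condition~(iii) must be read via the strict topology; everything else is routine bookkeeping once the identification $B\cong\varinjlim_k M_{n^{k}}(A)$ is in place.
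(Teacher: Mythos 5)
Your argument is correct and is in substance the paper's own proof: the paper likewise takes $B$ to be the closure of the union of the algebras $B_k\simeq M_{n^k}(A)$ spanned by the elements $t^k(\bar{\xi})t^k(\bar{\eta})^*$, identifies $B$ with $\varinjlim M_{n^k}(A)$ to obtain the class-closure statement, and concludes via the gauge-invariance uniqueness theorem for $B\times_{\beta}\bbN$ stated just before the theorem. There are two differences worth recording. First, the paper uses the averaged isometry $v=\tfrac{1}{\sqrt n}\sum_{i=1}^n t(e_i)$ rather than your $V=t_1$; both satisfy conditions (i)--(iii) of Definition~\ref{covariant} and both yield surjectivity (the paper via $t(e_i)\pi(a)=\sqrt n\, t(e_i)\pi(a)t(e_i)^*v\in Bv$, you via $t_i=(t_it_1^*)V$), so this is harmless: you simply produce a different injective endomorphism of the same algebra $B$, and the theorem only asserts existence. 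Second, and more substantively, your handling of the non-unital (or unital-but-degenerate) case is the one insecure step: the strict limits $t_i=\lim_\lambda t(e_iu_\lambda)$ need not exist in $M(\O_{(A,\ga)})$ when the individual $\ga_i$ are degenerate, since $t(e_iu_\lambda)t(\xi)$ is Cauchy only when $\ga_i(u_\lambda)\xi_i$ converges, which can fail. The paper sidesteps this entirely by passing to the unitization $(A',\ga')$, taking a faithful covariant representation $(\pi,t)$ of $(X_{\ga'},A',\phi_{\ga'})$, and noting that its restriction to $(X_\ga,A,\phi_\ga)$ generates $\O_{(A,\ga)}$; the elements $t(e_i)$ then exist on the nose in the ambient $\ca$-algebra, and one verifies at the end that $\ca(B,Bv)=\O_{(A,\ga)}$. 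You should adopt that unitization device in place of the multiplier-algebra workaround; with that substitution the rest of your proof goes through verbatim.
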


\begin{proof}
Consider the unitizations
\begin{align*}
A' &=A +\bbC 1 \supseteq  A               \\
X'&=\oplus_{i=1}^{n}\, A' \supseteq  X
\end{align*}
and
\[
\ga_i'\colon \A_1 \longrightarrow \A_1 ;\, a+\gl1\longmapsto \ga_i(a)+ \gl 1.
\]
(Here we understand $A'$ to be the unique unital $\ca$-algebra that contains $A$ as and ideal and has the property that $A' \slash A \simeq \bbC$.)

Let $(\pi, t)$ be a faithful covariant representation of $(X_{\ga '}, A', \phi_{\ga ' })$. Notice that the restriction of $(\pi, t)$ on $(X_{\ga}, A, \phi_\ga)$ is a (faithful) covariant representation admitting a gauge action and so the $\ca$-algebra generated by it is $\O_{(A, \ga)}$.

Let $e_i= (0, \dots, 1, \dots, 0)$, where the $1$ appears in the $i$th- position, and consider the subalgebras
\begin{align*}
B_m&\equiv \overline{\Span} \left\{ t^m(\bar{\xi})t^m(\bar{\eta})^* \mid \bar{\xi}. \bar{\eta}\in X_{\ga}^m \right\}  \\
&=\overline{\Span} \left\{ \left(\prod _{k=1}^{m} t(e_{i_{k}})\right) \pi(a)\left(\prod _{l=1}^{m} t(e_{i_{l}})\right)^* \mid \, a \in A  \right\} \\
&\simeq M_{n^m}(A).
\end{align*}
Since $\phi_{\ga'}$ acts on $X_{\ga'}$ by rank-one operators, the covariance of $(\pi, t)$ implies that
\[
\pi(A)\subseteq B_1\subseteq \dots \subseteq B_{k} \subseteq B_{k+1} \subseteq \dots
\]
Let
\[
B\equiv \overline{\, \, \, \,  \bigcup_{m=1}^{\infty}\, B_m }
\]
and set
\[
v= \frac{1}{\sqrt{n}} \sum_{i=1}^{n} \, t(e_i).
\]
 The isometry $v$ leaves invariant $B$ and therefore it defines an injective endomorphism
\[
\beta \colon B \longrightarrow B \colon  x \longrightarrow v x v^*.
\]
Let $\ca(B, Bv)$ be the $\ca$-algebra generated by $B $ and $Bv$. Then $\ca(B, Bv)$ inherits from $(\pi, t)$ its natural gauge action that leaves invariant $B$ and twists $v$. Since $v$ satisfies (ii) and (iii) in Definition~\ref{covariant}, we have by gauge invariance
\[
\ca(B, Bv) \simeq B \times_{\beta} \bbN.
 \]
On the other hand, for $a \in A$ and each $i=1,2,\dots ,n $, we have
\[
t(e_i)\pi(a)=\sqrt{n} t(e_i)\pi(a)t(e_i)^* v\in B v
\]
since $t(e_i)\pi(a)t(e_i)^* \in B_2$. But elements of the form
\[
\phantom{XXXX}t(e_i) \pi(a)= t(0, \dots, 0,a,0,\dots,0),  \quad a\in A, \, i=1,2, \dots,n
\]
span $t(X_\ga)$ and so
\[
\ca(B, Bv) = \O_{(A, \ga)}.
\]
Hence $B\times_{\beta}\bbN  \simeq \O_{(A, \ga)} $, as desired.
\end{proof}

Even in the case where $A$ is unital but $\ga$ is degenerate, we still need to pass to the unitization $A'$ in the above proof in order to ensure that $B$ is a directed limit of matrix algebras over $A$. Notice however that $A$ may not contain the unit operator $1$ of the Hilbert space on which $(\pi , t)$ acts. Nevertheless, when both $A$ and $\ga$ are unital, we do not need to pass to the unitization. In that case $B$ contains the unit operator $1$ and so by the earlier discussion on Stacey's crossed product, we have $\O_{(A,\ga)}\simeq B \rtimes_{\beta}\bbN$.

Combining Theorem~\ref{injectivemulti} with Theorem~\ref{multitail} we obtain

\begin{corollary} \label{main}
If $(A, \ga)$ is a multivariable system, then there exists a $\ca$-algebra $B$ and an injective endomorphism $\beta$ of $B$ so that $\O_{(A, \ga)}$ is isomorphic to a full corner of the crossed product algebra $B\times_{\beta}\bbN$. Furthermore, if $A$ belongs to a class $\C$  which is invariant under direct limits, quotients and tensoring by $M_k(\bbC)$, $k\in \bbN$, then $B$ also belongs to $\C$.
\end{corollary}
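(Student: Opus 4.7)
The plan is to chain the two preceding theorems of this section. Starting from an arbitrary multivariable $\ca$-dynamical system $(A,\ga)$, I would apply Theorem~\ref{multitail} first and Theorem~\ref{injectivemulti} second to obtain the desired realization of $\O_{(A,\ga)}$ as a full corner of a crossed product by an injective endomorphism.

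More precisely, Theorem~\ref{multitail} produces an injective multivariable $\ca$-dynamical system $(B_0,\beta_0)$ together with an isomorphism identifying $\O_{(A,\ga)}$ with a full corner of $\O_{(B_0,\beta_0)}$. Applied now to the injective system $(B_0,\beta_0)$, Theorem~\ref{injectivemulti} delivers a $\ca$-algebra $B$, an injective endomorphism $\beta\colon B \to B$, and a $*$-iso\-mor\-phism $\O_{(B_0,\beta_0)} \simeq B\times_{\beta}\bbN$. Composing these two identifications exhibits $\O_{(A,\ga)}$ as a full corner of $B\times_{\beta}\bbN$, which is the main assertion.

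For the permanence claim, one tracks $A$ through the two constructions. In Theorem~\ref{multitail} the algebra $B_0$ has the form $A \oplus c_0(\vrtm,\theta(A))$ with $\theta(A)$ a quotient of $A$ and the $c_0$-sum a direct limit of finite direct sums of that quotient; hence $B_0\in\C$ provided $\C$ is closed under quotients, finite direct sums, and direct limits. In Theorem~\ref{injectivemulti} the algebra $B$ is then exhibited as the norm-closure of a nested union $\bigcup_m B_m$ of subalgebras $B_m \simeq M_{n^m}(B_0) \cong B_0 \otimes M_{n^m}(\bbC)$; hence $B\in \C$ provided $\C$ is further closed under tensoring by $M_k(\bbC)$ and under direct limits.

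There is no substantive obstacle in this argument since all the hard work has already been done in Theorems~\ref{multitail} and~\ref{injectivemulti}; the only bookkeeping concerns matching the class-closure hypotheses of the corollary with those required by the two theorems. In particular, one should note that for the classes of primary interest (AF, AH, and the like), finite direct sums are either built into the class or arise from corners of $M_k(A) = A \otimes M_k(\bbC)$, and the countable $c_0$-sum then falls out as a direct limit of such finite sums, so that the three closures listed in the corollary suffice to invoke both preceding theorems.
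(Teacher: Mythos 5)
Your proposal matches the paper exactly: the paper derives Corollary~\ref{main} in one line by combining Theorem~\ref{multitail} (pass to an injective system whose Cuntz--Pimsner algebra contains $\O_{(A,\ga)}$ as a full corner) with Theorem~\ref{injectivemulti} (realize that algebra as $B\times_{\beta}\bbN$). Your extra bookkeeping on the class-closure hypotheses is more careful than what the paper records, and is sound.
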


Finally, let us give a quick application of Theorem~\ref{injectivemulti}, that readily follows  from Paschke's result \cite{Pas} on the simplicity of $B \times_{\beta}\bbN$.

\begin{corollary}
Let $A$ be a UHF $\ca$-algebra and let $\ga=(\ga_1,\dots,\ga_n)$ be a multivariable system with $n\geq2$. If $\ga_i(1)=1$, for all $i=1,2,\dots,n$, then $\O_{(A,\ga)}$ is simple.
\end{corollary}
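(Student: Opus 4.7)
The plan is to reduce simplicity of $\O_{(A,\ga)}$ to Paschke's simplicity theorem for endomorphism crossed products via Theorem~\ref{injectivemulti}. First I would verify that $(A,\ga)$ is an \emph{injective} multivariable system: since $A$ is UHF it is simple and unital, so for each $i$ the ideal $\ker\ga_i$ is either $(0)$ or $A$, and the hypothesis $\ga_i(1)=1$ rules out the latter. Hence $\cap_{i=1}^n \ker\ga_i=(0)$, and since $A$ and all $\ga_i$ are unital, the remarks following Theorem~\ref{injectivemulti} yield
\[
\O_{(A,\ga)}\simeq B\rtimes_{\beta}\bbN,
\]
where $B=\overline{\bigcup_m B_m}$ with $B_m\simeq M_{n^m}(A)$ unitally embedded in $B_{m+1}$, and $\beta(x)=vxv^*$ for the isometry $v=\frac{1}{\sqrt{n}}\sum_{i=1}^n t(e_i)$.

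Next I would identify $B$ as a UHF algebra. Each $B_m\simeq M_{n^m}\otimes A$ is UHF, and the canonical unital inclusions $B_m\hookrightarrow B_{m+1}$ exhibit $B$ as the UHF algebra whose supernatural number is $n^{\infty}$ times that of $A$. In particular $B$ is a simple, unital $\ca$-algebra carrying the injective endomorphism $\beta$, which is moreover \emph{proper}: the assumption $n\geq 2$ forces the $t(e_i)$ to have pairwise orthogonal ranges, so $v$ is a non-unitary isometry and $\beta(1_B)=vv^*$ is a proper subprojection of $1_B$.

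It then suffices to invoke Paschke's simplicity theorem \cite{Pas}, which states that the crossed product of a simple unital $\ca$-algebra by a proper injective $*$-endomorphism is simple. The main point requiring care is conventional rather than substantive: the crossed product produced by Theorem~\ref{injectivemulti} (when $A,\ga$ are unital) is Stacey's version, which a priori imposes only condition~(i) of Definition~\ref{covariant}. However, for any $a\in A$ one computes $v^*\pi(a)v=\tfrac{1}{n}\sum_i \pi(\ga_i(a))\in \pi(A)\subseteq B$, so the normalising condition~(ii) of Definition~\ref{covariant} is automatic, and the gauge-invariance uniqueness discussion preceding Theorem~\ref{injectivemulti} then identifies Stacey's and Paschke's crossed products. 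Paschke's theorem thus applies directly to give simplicity of $B\rtimes_{\beta}\bbN\simeq \O_{(A,\ga)}$. As a sanity check, specialising to $A=\bbC$ and $\ga_i=\id$ recovers the classical identification $\O_n\simeq M_{n^{\infty}}\rtimes_{\beta}\bbN$.
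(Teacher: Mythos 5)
Your proposal is correct and follows essentially the same route the paper intends: the paper offers no written proof beyond the remark that the corollary ``readily follows'' from Theorem~\ref{injectivemulti} together with Paschke's simplicity result, and your argument is precisely a careful fleshing-out of that reduction (injectivity of the system from simplicity of $A$ and $\ga_i(1)=1$, identification of $B=\varinjlim M_{n^m}(A)$ as a simple unital UHF algebra, properness of $\beta$ when $n\geq 2$, and the identification of Stacey's crossed product with the one used here via condition (ii)). Nothing further is needed.
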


%%%%%%%%%%%%%%%%%%%%%%%%%%%%%%%%%%%%%%%%%%%%%%%%%%%%%%%%%%%%%%%%%%%%%%%%%%%%%%%%

\section{The $\ca$-envelope of a tensor algebra}\label{applications}

Motivation for the study in this paper comes from the fact that the Cuntz-Pimsner algebra $\O_X$ of a $\ca$-correspondence $(X, A, \phi_X)$ is the $\ca$-envelope of the associated tensor algebra $\T_X^+$, as we remarked in Theorem~\ref{KKenv}. We may paraphrase Theorem~\ref{KKenv} as follows.

\begin{theorem}\label{lmain}
If $(X, A, \phi_X)$ is a $\ca$-correspondence over $A$, then the $\ca$-envelope
of the tensor algebra $\T_{X}^+$ can identified as a full corner of
a Cuntz-Pimsner algebra $\O_Y$ for of an essential Hilbert bimodule $(Y, B, \phi_Y)$. Moreover, if
$(X, A, \phi_X)$ is injective, then $\ca_e(\T_{X}^+)$ is $*$-isomorphis to $\O_Y$.
\end{theorem}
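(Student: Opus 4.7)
The plan is to chain together two constructions already developed in the paper. By Theorem~\ref{KKenv} we have $\ca_e(\T_X^+) \cong \O_X$, so the task reduces entirely to exhibiting $\O_X$ either as an essential-bimodule Cuntz-Pimsner algebra $\O_Y$ (in the injective case) or as a full corner of one (in general).

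Assume first that $(X,A,\phi_X)$ is injective. Then the dilation procedure of Pimsner and Schweizer, as revisited in Section~\ref{secdilations}, produces an essential Hilbert bimodule $(Y,B,\phi_Y)$ into which $(X,A,\phi_X)$ embeds naturally and, crucially, in such a way that the associated Cuntz-Pimsner algebras coincide: $\O_X \cong \O_Y$. Combined with Theorem~\ref{KKenv}, this yields $\ca_e(\T_X^+) \cong \O_Y$, which is the second assertion of the theorem.

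Now drop the injectivity assumption. The strategy is: first replace $X$ by an injective correspondence (adding a tail), then dilate the result to an essential Hilbert bimodule. Concretely, pick any admissible data
\[
\tau = \Big(\G, (X_e)_{e \in \edg}, (A_p)_{p\in \vrt}, (\phi_{e})_{e \in \edg} \Big)
\]
satisfying the hypotheses of Theorem~\ref{non-injective case} (for instance, the Muhly-Tomforde data of Example~\ref{MTexample} will always serve). By that theorem, $(\xtau,\atau,\ptau)$ is injective and $\O_X$ sits as a full corner of $\O_{\xtau}$, say $\O_X = Q\O_{\xtau}Q$ with $Q$ the projection constructed before Corollary~\ref{corner}. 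Now apply the injective case just handled to $(\xtau,\atau,\ptau)$: the Pimsner-Schweizer dilation gives an essential Hilbert bimodule $(Y,B,\phi_Y)$ with $\O_{\xtau} \cong \O_Y$. Transporting $Q$ through this isomorphism produces a projection in $\O_Y$ whose corner equals $\O_X$, and fullness is preserved since the isomorphism is onto. Invoking Theorem~\ref{KKenv} once more identifies this corner with $\ca_e(\T_X^+)$.

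The only genuine subtlety is making sure the two constructions compose cleanly: the dilation is only directly applicable to injective correspondences, which is why the tail step must come first, and one must check that the full-corner relationship $\O_X = Q\O_{\xtau}Q$ survives the bimodule dilation, but this is automatic because dilation yields a $*$-isomorphism at the level of Cuntz-Pimsner algebras. The main conceptual point, rather than a technical obstacle, is that the theorem is essentially a repackaging of Theorem~\ref{KKenv}, Theorem~\ref{non-injective case}, and the Pimsner-Schweizer dilation, and it was the desire for such a clean repackaging that motivated the detailed development of both the tail and the dilation constructions earlier in the paper.
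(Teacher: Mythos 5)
Your proposal is correct and follows essentially the same route as the paper: Theorem~\ref{KKenv} reduces everything to $\O_X$, the injective case is Theorem~\ref{injective case} (the Pimsner--Schweizer dilation of Section~\ref{secdilations}), and the non-injective case first adds a tail via Theorem~\ref{non-injective case} and then dilates the resulting injective correspondence. The paper's own proof is just a terser version of this ("the conclusion follows"); your remark that the full-corner relation is transported through the $*$-isomorphism $\O_{\xtau}\cong\O_Y$ is exactly the implicit final step.
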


\begin{proof} If $(X, A, \phi_X)$ is injective, then this follows by Theorem
\ref{injective case}. If $(X, A, \phi_X)$ is not injective, then by choosing any
"tail" in Theorem \ref{non-injective case} (there is at
least one such tail, as shown by Muhly and Tomforde \cite{MuTom04}),  $\ca_e(\T_{X}^+)=\O_X$ is a
full corner of a Cuntz-Pimsner algebra of an injective
$\ca$-correspondence, and the conclusion follows
\end{proof}

Theorem \ref{main} raises a host of relevant questions and directions for further investigation. By adding various tails to a non-injective $\ca$-correspondence $(X, A, \phi_X)$, we obtain a family of Morita-equivalent pictures for $\ca_e(\T_{X}^+)$. For instance, we can do this with the tensor algebras for non-injective multivariable systems. Can this be used in the classification program for such algebras? In particular, we wonder if there is a familiar description for the Cuntz-Pimsner algebra $\O_Y$ of Proposition~\ref{MTtail}, that results from adding the Muhly-Tomforde tail to the $\ca$-correspondence of a non-injective dynamical system $(A,\ga)$.

%%%%%%%%%%%%%%%%%%%%%%%%%%%%%%%%%%%%%%%%%%%%%%%%%%%%%%%%%%%%%%%%%%%%%%%%%%%%%
%%%%%%%%%%%%%%%%%%%%%%%%%%%%%%%%%%%%%%%%%%%%%%%%%%%%%%%%%%%%%%%%%%%%%%%%%%%%%%%%%%%%%%%%%%%%%%%%%%

\section{Appendix: Dilations of $\ca$-correspondences} \label{secdilations}

The concept of dilating an injective $\ca$-correspondence to an essential Hilbert bimodule was formally introduced  by
Schweizer \cite{Sch00} with the purpose of studying the simplicity of Cuntz-Pimsner algebras. However, most of the elements of his construction already appear in the seminal paper of Pimsner \cite{Pim}.
In this section we give an overview of their theory with the purpose of establishing notation and providing a very transparent picture for the dilation of an injective correspondence to an essential Hilbert bimodule. Therefore we do not claim originality for the statements of the results included here; only for their proofs and the picture that they provide.

In the $\ca$-literature, the term ``Hilbert bimodule'' has several interpretations. For us, it will be used as follows.

\begin{definition}
\textup{ A  \emph{Hilbert $A$-bimodule} $(X,A,\phi_X)$ is a
$\ca$-correspondence $(X,A,\phi_X)$ together with a left inner
product $\lsca{\cdot,\cdot}\colon X \times X \rightarrow A$, which
satisfy:
\begin{enumerate}
\item $\lsca{\phi_X(a)\xi,\eta}= a\lsca{\xi,\eta}$,
$\lsca{\xi,\eta}=\lsca{\eta,\xi}^*$, $\lsca{\xi,\xi}\geq 0$,
\item $\phi_X(\lsca{\xi,\eta})\zeta=\xi\sca{\xi,\zeta}$
\end{enumerate}
for $\xi, \eta, \zeta \in X$, and $a\in A$.}
\end{definition}
The last equation implies that
$\phi_X(\lsca{\xi,\eta})=\Theta^X_{\xi,\eta}$. It is clear that
Hilbert bimodules are a special case of $\ca$-correspondences. Let
$I_X$ be the ideal,
\[
 I_X=\overline{\Span}\{\lsca{\xi,\eta}: \xi,\eta \in X\},
\]
in $A$. Using the very definitions, one can prove that $a\in
\ker\phi_X$, if and only if $a\in I_X^\bot$. Hence, $\phi_X$ is
$*$-injective, if and only if, the Hilbert $A$-bimodule $X$ is
\emph{essential}, i.e. when the ideal $I_X$ is essential in $A$. The
ideal $I_X$ is associated to the covariant representations of the
$\ca$-correspondence $(X,A,\phi_X)$ in the following fundamental
way.

\begin{lemma}\textup{(\cite{Kats03})}
If a Hilbert $A$-bimodule $(X,A,\phi_X)$ is considered as a
$\ca$-correspondence over $A$, then $J_X=I_X$.
\end{lemma}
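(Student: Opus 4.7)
The plan is to prove the two inclusions $I_X \subseteq J_X$ and $J_X \subseteq I_X$ separately, relying on the identity $\phi_X(\lsca{\xi,\eta}) = \Theta^X_{\xi,\eta}$ together with the preceding observation in the excerpt that $\ker \phi_X = I_X^\bot$.

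For $I_X \subseteq J_X$, the defining identity shows that $\phi_X$ sends every generator $\lsca{\xi,\eta}$ of $I_X$ to a rank-one operator, so $\phi_X(I_X) \subseteq \K(X)$ and hence $I_X \subseteq \phi_X^{-1}(\K(X))$. For the annihilator condition, I would invoke the standard $\ca$-fact that any closed two-sided ideal $I$ of a $\ca$-algebra and its annihilator $I^\bot = \{a : aI = 0\}$ satisfy $I \cap I^\bot = 0$ (any $a$ in the intersection would satisfy $a^*a = 0$, hence $a = 0$). Taking $I = I_X$ and combining with $\ker \phi_X = I_X^\bot$ gives $I_X \cap \ker \phi_X = 0$, which means precisely that $I_X \subseteq \ker \phi_X^\bot$. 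Hence $I_X \subseteq J_X$.

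For the reverse inclusion $J_X \subseteq I_X$, the first step of the plan is to upgrade the easy containment $\phi_X(I_X) \subseteq \K(X)$ to the equality $\phi_X(I_X) = \K(X)$. Since $I_X \cap \ker \phi_X = 0$ from the previous step, the restriction $\phi_X|_{I_X} \colon I_X \to \K(X)$ is an injective $*$-homomorphism, hence isometric, so its image is complete and therefore closed in $\K(X)$. That image already contains every finite sum $\sum_i \Theta^X_{\xi_i,\eta_i} = \phi_X\bigl(\sum_i \lsca{\xi_i,\eta_i}\bigr)$, and such sums are dense in $\K(X)$; combined with closedness this gives $\phi_X(I_X) = \K(X)$. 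Now given $a \in J_X$, the element $\phi_X(a)$ lies in $\K(X) = \phi_X(I_X)$, so there exists $b \in I_X$ with $\phi_X(a) = \phi_X(b)$. Then $a - b \in \ker \phi_X$, while $a \in \ker \phi_X^\bot$ by hypothesis and $b \in I_X \subseteq \ker \phi_X^\bot$ by the first part, so $a - b \in \ker \phi_X^\bot$ as well. Thus $a - b \in \ker \phi_X \cap \ker \phi_X^\bot = 0$, giving $a = b \in I_X$.

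The only non-bookkeeping step is establishing the surjectivity $\phi_X(I_X) = \K(X)$, which is where the isometry argument (based on $I_X \cap \ker \phi_X = 0$) is essential in order to close up the image of the finite-rank operators. Once surjectivity is in hand, the remainder of the argument is a direct cancellation exploiting the orthogonality between $\ker \phi_X$ and $\ker \phi_X^\bot$.
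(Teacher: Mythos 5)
Your argument is correct and complete. The paper itself offers no proof to compare against: the lemma is quoted from Katsura's work \cite{Kats03}, so your write-up is a self-contained reconstruction rather than a variant of an argument in the text. All the ingredients you use are legitimately available at that point in the paper --- the identity $\phi_X(\lsca{\xi,\eta})=\Theta^X_{\xi,\eta}$ and the equality $\ker\phi_X=I_X^{\bot}$ are both stated just before the lemma --- and the two nontrivial moves both check out: $I_X\cap\ker\phi_X=I_X\cap I_X^{\bot}=0$ gives $I_X\subseteq\ker\phi_X^{\bot}$ because for closed two-sided ideals $I\cap J=0$ is equivalent to $IJ=0$; and the injectivity of $\phi_X|_{I_X}$ forces its range to be a closed subalgebra of $\K(X)$ containing the finite-rank operators, whence $\phi_X(I_X)=\K(X)$ and the cancellation $a-b\in\ker\phi_X\cap\ker\phi_X^{\bot}=0$ finishes the reverse inclusion. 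This is essentially the standard argument, and correctly identifies the surjectivity of $\phi_X|_{I_X}$ onto $\K(X)$ as the one step requiring real work.
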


Hence, for $\xi,\eta\in X$, the element $\lsca{\xi,\eta}\in A$ is
identified with the unique element $a\in J_X$ such that
$\phi_X(a)=\Theta^X_{\xi,\eta}$. This is the big advantage (and a characterizing property \cite{Kats04}) of Hilbert bimodules, i.e., $J_X$ is $*$-isomorphic to $\K(X)$.
\begin{definition}
A correspondence $(Y, B, \phi_Y)$ is said to be a dilation of the correspondence $(X, A, \phi_X)$ iff
$(X, A, \phi_X)$ sits naturally inside $(Y, B, \phi_Y)$, i.e., $(X, A, \phi_X)$ is unitarily equivalent to a sub-correspondence of $(Y, B, \phi_Y)$, and the associated Cuntz-Pimsner algebras $O_X$ and $O_Y$ are $*$-isomorphic.
\end{definition}

In order to describe the dilation of the correspondence $(X, A, \phi_X)$ to an essential Hilbert bimodule, consider the
directed system
\[
A \stackrel{\rho_0}{\longrightarrow} \L(X)
\stackrel{\rho_1}{\longrightarrow}  \L(X^{\otimes 2}) \stackrel{\rho_2} \longrightarrow \cdots,
\]
where
\begin{align*}
&\rho_0=\phi_X \colon A=\L(A)\longrightarrow \L(X),\\
&\rho_n \colon \L(X^{\otimes n}) \longrightarrow \L(X^{\otimes n+1})\colon r
\longmapsto r\otimes \id_1 , \, n\geq 1,
\end{align*}
and let $B\equiv \varinjlim (\L(X^{\otimes n}), \rho_n)$.

Consider also the directed system of Banach spaces
\[
\L(A,X) \stackrel{\sigma_0}{\longrightarrow}
\L(X,X^{\otimes 2}) \stackrel{\sigma_1}{\longrightarrow} \cdots ,
\]
where
\begin{align*}
 \sigma_n \colon \L(X^{\otimes n}, X^{\otimes n+1}) \rightarrow
\L(X^{\otimes n+1}, X^{\otimes n+2}) \colon  s \mapsto s\otimes \id_1 , \, n\geq 1,
\end{align*}
and let $Y \equiv \varinjlim (\L(X^{\otimes n}, X^{\otimes n+1}), \gs_n)$. Note that the map
\[
\partial \colon X \longrightarrow
\L(A,X) \colon \xi \longmapsto \partial_{\xi},
\]
where $\partial_{\xi}(a)=\xi a$, $\xi \in X$, maps a copy of $X$ isometrically into \break $\K(A,X) \subseteq Y$.

If $r\in \L(X^{\otimes n})$, $s\in \L(X^{\otimes n},X^{\otimes
n+1})$ and $[r], [s]$ their equivalence classes in $B$ and $Y$ respectively,
then we define $[s]\cdot [r]:= [sr]$. From this, it is easy to define a right $B$-action on $Y$.
Similarly, we may define a $B$-valued right inner product on $Y$ by setting
\begin{align*}
\sca{[s'],[s]}_Y \equiv [(s')^*s] \in B.
\end{align*}
for $s, s' \in \L(X^{\otimes n},X^{\otimes n+1})$, $n \in \bbN$,
and then extending to $Y \times Y$. Finally we define a $*$-homomorphism $\phi_Y \colon B
\rightarrow \L(Y)$ by setting
\[
\phantom{XXX} \phi_Y([r])([s]) \equiv [rs], \quad r\in \L(X^{\otimes n}), s\in
\L(X^{\otimes n-1},X^{\otimes n}), n\geq0
\]
and extending to all of $B$ by continuity. We therefore have a left $B$-action on $Y$ and
thus $(Y, B, \phi_Y)$ becomes a $\ca$-correspondence.

The following diagrams depicts the above construction in a heuristic way: the right action is "defined" through the diagram
\begin{align*}
\xymatrix{ B \ar@{-->}[d]_{\cdot} & A \ar[r]^{\phi_X\equiv
\rho_0} \ar@{-->}[d]_{\cdot} & \L(X) \ar[r]^{\rho_2}
\ar@{-->}[d]_{\cdot} & \L(X^{\otimes 2}) \ar[r]^{\rho_3}
\ar@{-->}[d]_{\cdot} &
\dots \ar@{-->}[d]_{\cdot} \\
Y & \L(A,X) \ar[r]^{\sigma_0}  & \L(X,X^{\otimes 2})
\ar[r]^{\sigma_1} & \L(X^{\otimes 2},X^{\otimes 3}) \ar[r]^{\phantom{XXX} \sigma_3} &  \dots
}
\end{align*}
while the left action is "defined" through the diagram
\begin{align*}
\xymatrix{ B \ar@{-->}[d]_{\phi_{Y}} & A \ar[r]^{\phi_X\equiv \rho_0}
 & \L(X) \ar[r]^{\rho_2} \ar@{-->}[dl]_{\id} &
\L(X^{\otimes 2}) \ar[r]^{\rho_3} \ar@{-->}[dl]_{\id} &
\dots \ar@{-->}[dl]_{\id} \\
Y & \L(A,X) \ar[r]^{\sigma_0}  & \L(X,X^{\otimes 2})
\ar[r]^{\sigma_1} & \L(X^{\otimes 2},X^{\otimes 3}) \ar[r]^{} &
\dots }
\end{align*}

\begin{proposition}
Let $(X, A, \phi_X)$ be a faithful
$\ca$-correspondence and let $(Y, B, \phi_Y)$ be as above. Then $(Y, B, \phi_Y)$ can be equipped with a left $B$-valued inner product so that it becomes an essential Hilbert bimodule.
\end{proposition}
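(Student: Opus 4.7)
The plan is to equip $Y$ with the left $B$-valued inner product
\[
\lsca{[s],[s']} \;\equiv\; [\,s(s')^*\,],
\]
defined first on representatives $s, s' \in \L(X^{\otimes n}, X^{\otimes n+1})$ at the same level, and then to verify that this turns $(Y, B, \phi_Y)$ into an essential Hilbert bimodule. The first task is to check that the formula descends to a well-defined map $Y \times Y \to B$; this reduces to the identity $(s\otimes\id_1)(s'\otimes\id_1)^* = (s(s')^*)\otimes\id_1 = \rho_n(s(s')^*)$, which says that raising the level of $s,s'$ in $Y$ via $\sigma_n$ corresponds to raising the level of $s(s')^*$ in $B$ via $\rho_n$. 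The Hilbert-bimodule axioms are then routine verifications at the level of representatives: $\lsca{[s],[s']}^* = \lsca{[s'],[s]}$ by taking adjoints, $\lsca{[s],[s]} = [ss^*] \geq 0$ by positivity in $\L(X^{\otimes n+1})$, the associativity $\lsca{\phi_Y(b)\xi, \eta} = b\,\lsca{\xi,\eta}$ unwinds to $[(rs)(s')^*] = [r][s(s')^*]$, and the compatibility
\[
\phi_Y(\lsca{[s],[s']})[s''] \;=\; [\,s(s')^* s''\,] \;=\; [s]\cdot \sca{[s'],[s'']}
\]
follows directly from the definitions of the two $B$-actions and the right inner product.

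The substantive point is showing that the resulting Hilbert bimodule is \emph{essential}, which by the Katsura lemma recalled above is equivalent to the injectivity of $\phi_Y$. The faithfulness of $\phi_X$ is exactly what drives this. If $r \in \L(X^{\otimes n})$ satisfies $r\otimes\id_X = 0$, then for every $\zeta \in X^{\otimes n}$ and $\eta \in X$ one has
\[
\sca{\eta,\, \phi_X(\sca{r(\zeta), r(\zeta)})\eta} \;=\; \|r(\zeta)\otimes\eta\|^2 \;=\; 0,
\]
so $\phi_X(\sca{r(\zeta),r(\zeta)}) = 0$, whence $r(\zeta) = 0$ by injectivity of $\phi_X$ and $r = 0$. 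The same calculation shows that every $\sigma_n$ is injective, so each $\L(X^{\otimes n})$ embeds isometrically into $B$ and each $\L(X^{\otimes n}, X^{\otimes n+1})$ embeds into $Y$.

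To finish, I would show that the restriction of $\phi_Y$ to each embedded $\L(X^{\otimes m}) \subseteq B$ is injective; since an injective $*$-homomorphism of $\ca$-algebras is automatically isometric and $\bigcup_m \L(X^{\otimes m})$ is dense in $B$, this propagates to injectivity of $\phi_Y$ on all of $B$. Concretely, given $r \in \L(X^{\otimes m})$ with $\phi_Y([r]) = 0$, apply $\phi_Y([r])$ to the class of $\partial_\xi \otimes \id_{m-1} \in \L(X^{\otimes m-1}, X^{\otimes m})$ for arbitrary $\xi \in X$; the composite $r(\partial_\xi\otimes\id_{m-1})$ sends $\zeta \mapsto r(\xi\otimes\zeta)$, and its vanishing in $Y$, combined with the injectivity of the $\sigma_k$'s, forces $r(\xi\otimes\zeta) = 0$ for every $\xi \in X$ and $\zeta \in X^{\otimes m-1}$. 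Density of the elementary tensors in $X^{\otimes m}$ then yields $r = 0$, and a closely analogous argument using the embedding $X \hookrightarrow \K(A,X) \subseteq Y$, $\xi \mapsto \partial_\xi$, handles the level-zero case $r \in A$. The main obstacle in this plan is precisely this last injectivity step: threading the faithfulness of $\phi_X$ through every tensor power and past both direct limits is what ensures that the left action of $B$ on $Y$ is faithful, and hence that the ideal $I_Y$ generated by the left inner products is essential in $B$.
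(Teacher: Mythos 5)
Your proposal is correct and follows essentially the same route as the paper: you define the same left inner product $\lsca{[s],[s']}=[s(s')^*]$, reduce essentialness to injectivity of $\phi_Y$, and verify that injectivity level by level by composing with $\partial_\xi\otimes\id$ and evaluating on elementary tensors. The additional details you supply (well-definedness of the inner product, injectivity of the connecting maps $\rho_n,\sigma_n$, and the density argument propagating injectivity from $\bigcup_m\L(X^{\otimes m})$ to all of $B$) are exactly the steps the paper leaves implicit, and they are carried out correctly.
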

\begin{proof}
It is easy to see that the mapping
\begin{align*}
\lsca{[s],[r]} \equiv  [sr^*] \in B,
\end{align*}
where $s,r \in \L (X^{\otimes n }, X^{\otimes n+1})$, defines a left inner product in $Y \times Y$, such that $\xi
\sca{\eta , \zeta} = \lsca{\xi, \eta} \zeta$, for any $\xi, \eta,
\zeta \in Y$. Thus $(Y, B, \phi_Y)$ becomes a Hilbert $B$-bimodule.

In order to show that $(Y, B, \phi_Y)$ is faithful, it suffices to
prove that $\phi_{Y}$ is injective on every
$\L(X^{\otimes n})$, $n \geq 0$. Let $\phi_n$ denote the restriction of
$\phi_{Y}$ on $\L(X^{\otimes n})$ and $\partial_m:=\partial \otimes \id_{m} \in
\L(X^{\otimes m},X^{\otimes m+1})$, for $m \geq 0$.

For $n=0$, let
$a\in \ker\phi_{ 0}$. Therefore $a\in A$ and
\[
0=\phi_0(a)(\partial_{\xi})=\partial_{\phi_X(a)\xi},
\]
for any $\xi \in X$. Thus $a\in \ker\phi_X$, i.e., the class of $a$
in $B$ is the zero class.

For an arbitrary $n\geq 1$, let $r\in
\ker\phi_n$. Then $\phi_n(r)(\partial_n(\xi))=0$, for any $\xi \in
X$, and so
\begin{align*}
\phi_n(r)(\partial_n(\xi))(\eta_1\otimes \cdots \eta_{n-1})&=
r\circ \partial_n(\xi)(\eta_1 \otimes \cdots \eta_{n-1})\\
&= r( \xi \otimes \eta_1 \otimes \cdots \eta_n),
\end{align*}
for any $\xi, \eta_1,\dots, \eta_{n-1} \in X$. Thus, $r=0$ in that case as well and $\phi_Y$ is injective.
\end{proof}

The bimodule $(Y, B, \phi_Y)$ is too big for a dilation of $(X, A, \phi_X)$
and so we need to restrict to a smaller bimodule in order to obtain isomorphism of the associated Cuntz-Pimsner algebras.

Let $A_{\infty} \subseteq B$ be the $\ca$-algebra that is generated by all the copies
of $\K(X^{\otimes n})$, $n\geq 0$, inside $B$
and let $X_{\infty} \subseteq Y$ be the closed subspace generated by all
copies of $\K(X^{\otimes n},X^{\otimes n+1})$, $n\geq 0$, in $Y$.
We now show that triple $(X_{\infty}, A_{\infty}, \phi_{\infty})$ is the right dilation for $(X, A, \phi_X)$, where $\phi_{\infty}$ denotes the restriction of $\phi_Y$ on $\A_{\infty}$. First we need a lemma.

\begin{lemma}
Let $(X, A, \phi_X)$ be a faithful
$\ca$-correspondence and let $(X_{\infty}, A_{\infty}, \phi_{\infty})$ be as above. Then the Hilbert bimodule
$(X_{\infty}, A_{\infty}, \phi_{\infty})$ is essential.
\end{lemma}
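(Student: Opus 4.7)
By the discussion preceding the lemma (specifically, the equivalence between essentiality of a Hilbert bimodule and $*$-injectivity of its left action), the lemma reduces to showing that $\phi_{\infty}: A_{\infty} \to \L(X_{\infty})$ is injective. My plan is to follow the blueprint of the preceding proposition, but to replace the operators $\partial_{n}(\xi)$ used there (which need not lie in $X_{\infty}$ for $n \geq 1$) by rank-one compact operators that do lie in $X_{\infty}$.

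At each level $n \geq 1$, for an element $a$ in the image of $\K(X^{\otimes n})$ inside $A_{\infty}$ and a rank-one operator $s = \Theta_{\zeta, \eta} \in \K(X^{\otimes n-1}, X^{\otimes n}) \subseteq X_{\infty}$ (with $\zeta \in X^{\otimes n}$ and $\eta \in X^{\otimes n-1}$), the defining formula for the left action on $Y$ gives
\[
\phi_{\infty}(a)(\Theta_{\zeta, \eta}) \;=\; [\,a \circ \Theta_{\zeta, \eta}\,] \;=\; [\,\Theta_{a\zeta, \eta}\,].
\]
The connecting maps $\sigma_{k}$ of the $Y$-system are injective by the same argument used for the $\rho_{k}$ in the previous proposition (which relied on the faithfulness of $\phi_{X}$), so the vanishing of $[\Theta_{a\zeta,\eta}]$ for every such $\zeta$ and $\eta$ forces $a\zeta = 0$ for all $\zeta \in X^{\otimes n}$, and hence $a = 0$. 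The base case $n = 0$ is identical to the one treated in the previous proposition, since $\partial_{\xi} \in \K(A, X) \subseteq X_{\infty}$ and $\phi_\infty(a)(\partial_\xi) = \partial_{\phi_X(a)\xi}$.

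To extend this level-wise injectivity to the full algebra $A_{\infty}$, I combine the above calculation with the $\ca$-identity $\|\phi_{\infty}(a)\|^{2} = \|\phi_{\infty}(a^{*}a)\|$ and the fact, established in the preceding proposition, that $\phi_{Y}$ is an injective (hence isometric) $*$-homomorphism on $B$. Together these imply that $\phi_{\infty}$ is isometric on each generating subalgebra $\K(X^{\otimes n}) \subseteq A_{\infty}$; then, by continuity and norm-density of $\bigcup_{n} \K(X^{\otimes n})$ in $A_{\infty}$, $\phi_{\infty}$ is isometric on all of $A_{\infty}$ and in particular injective. The principal technical obstacle lies in this last step, because elements of $A_{\infty}$ need not be representable at a single finite level; one must verify carefully that the level-wise isometric property indeed survives passage to the norm closure, ruling out non-zero kernel elements that arise as limits across levels.
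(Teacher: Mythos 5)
Your reduction of the lemma to the injectivity of $\phi_\infty$ is legitimate (the paper's preliminary discussion identifies essentiality of a Hilbert bimodule with injectivity of its left action), and your level-wise computation is correct: for $a\in\K(X^{\otimes n})$ one has $\phi_\infty([a])([\Theta_{\zeta,\eta}])=[\Theta_{a\zeta,\eta}]$, and since the connecting maps are isometric this forces $a=0$ whenever $\phi_\infty$ annihilates $[a]$. But the step you defer to the end is not a routine verification; it is the actual content of the lemma, and as written your argument does not close it. Knowing that $\phi_\infty$ is injective (hence isometric) on each C*-subalgebra $[\K(X^{\otimes n})]$ \emph{separately} says nothing about elements of the form $[a]+[r]$ with $a\in A$ and $r\in\K(X^{\otimes m})$, let alone about norm limits of finite sums across levels: the subalgebras $[\K(X^{\otimes n})]$ are not nested, and a $*$-homomorphism can be injective on each member of a generating family of subalgebras while still having nonzero kernel. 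To make your scheme work you would need injectivity on the \emph{increasing} C*-subalgebras $C_N=\sum_{n=0}^{N}[\K(X^{\otimes n})]$ (which are closed because a C*-subalgebra plus a closed ideal is closed), and your rank-one test at a single level does not address such mixed sums.

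The paper's proof confronts exactly this cross-level issue, by a different route: it works with the ideal $J_\infty=\overline{\lsca{X_\infty,X_\infty}}$ and shows it is essential in $A_\infty$. Pedersen's Corollary 1.5.8 gives the closed decomposition $A_\infty=A+J_\infty$, so an arbitrary $c\in J_\infty^{\perp}$ can be written as $[a]-d$ with $a\in A$ and $d\in J_\infty$ --- a two-term expression replacing your limits of multi-level sums --- and multiplying by an approximate identity $(u_\lambda)$ of $\K(X)\subseteq J_\infty$ then shows $\phi_X(a)\in\K(X)$, hence $[a]\in J_\infty$ and $c\in J_\infty\cap J_\infty^{\perp}=\{0\}$. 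Some such device (the $A+J_\infty$ decomposition, or testing a single level-$N$ representative $b_N\in\L(X^{\otimes N})$ of an element of $C_N$ against all of $\partial(X)\subseteq X_\infty$ at once) is indispensable; without it your proof is incomplete at precisely the point you flag as the principal obstacle.
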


\begin{proof}
We will show that
$J_{\infty}=\overline{\lsca{X_{\infty},X_{\infty}}}$ is essential in $A_{\infty}$; let $c\in J_{\infty}^\bot$. Since
\[
A_{\infty}=
\overline{\Span}\{[a], [r] \mid\, a\in A, r\in \K(X^{\otimes n}), n \geq
1\}
\]
and
\[
J_{\infty} =
\overline{\Span}\{[r]\mid \,r \in \K(X^{\otimes
n}), n\geq 1\},
\]
Corollary 1.5.8 in \cite{Ped79} implies that $A_{\infty}= A+J_{\infty}$.
Hence there exist $a\in A$ and a $d \in J_{\infty}$ such
that
\begin{equation} \label{spliteq}
c=[a] - d.
\end{equation}
If $(u_\gl)_\gl $ be an approximate identity in $\K(X)$, then
\begin{align*}
\begin{split}
0=c[u_\gl] &= a[u_\gl]-d[u_\gl] \\
           &=[\phi_X(a)u_\gl] -d[u_\gl].
           \end{split}
           \end{align*}
Since $([u_\gl ])_\gl \subseteq J_\infty$ is an approximate unit,
$\big([\phi_X(a)u_\gl]\big)_\gl$, and so by injectivity $\big(\phi_X(a)u_\gl\big)_\gl$, is a Cauchy sequence, converging to some $k \in \K(X)$. On the other hand, $\lim_{\gl}\,u_\gl\xi=\xi$, for all $\xi \in \X$. Hence $\phi_X(a)=k\in \K(X)$ and so $[a] \in \J_\infty$. Therefore (\ref{spliteq}) implies that $c \in J_\infty \cap J_{\infty}^\bot =\{0\}$ and we are done.
\end{proof}

\begin{theorem}\label{injective case}
Let $(X, A, \phi_X)$ be an injective $\ca$-correspondence and let $(X_{\infty}, A_{\infty}, \phi_{\infty})$ be as above. Then $(X_{\infty}, A_{\infty}, \phi_{\infty})$ is an essential Hilbert bimodule and its Cuntz-Pimsner algebra $\O_{X_\infty}$ is isomorphic to $\O_X$.
\end{theorem}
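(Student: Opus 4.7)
The proof has two parts. First, I would verify that $(X_\infty, A_\infty, \phi_\infty)$ is an essential Hilbert bimodule. Most of this is already in place: the preceding proposition showed that $(Y, B, \phi_Y)$ is a faithful Hilbert bimodule with left inner product $\lsca{[s],[r]} = [sr^*]$, and this structure restricts to the closed subspaces $X_\infty \subseteq Y$ and $A_\infty \subseteq B$, which are stable under the two actions and both inner products by construction. Essentiality of $J_{X_\infty} = I_{X_\infty}$ in $A_\infty$ is exactly the preceding lemma, and faithfulness of $\phi_\infty$ is inherited from $\phi_Y$.

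For the isomorphism $\O_X \simeq \O_{X_\infty}$, my plan is to apply the gauge-invariant uniqueness theorem to a restriction of the universal representation. Let $(\pi_\infty, t_\infty)$ be the universal covariant representation of $(X_\infty, A_\infty, \phi_\infty)$, so $\ca(\pi_\infty, t_\infty) = \O_{X_\infty}$; it is faithful on $A_\infty$ and carries a gauge action $\{\gamma_z\}_{z \in \bbT}$. The natural correspondence embedding $(X, A, \phi_X) \hookrightarrow (X_\infty, A_\infty, \phi_\infty)$ given by $a \mapsto \iota_0(a)$ and $\xi \mapsto [\partial_\xi]$ allows me to restrict, producing a Toeplitz representation $(\sigma, v) := (\pi_\infty|_A, t_\infty|_X)$ of $(X, A, \phi_X)$ that is faithful on $A$ and inherits the gauge action. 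To verify covariance: for $a \in J_X = \phi_X^{-1}(\K(X))$ (using injectivity of $\phi_X$) with $\phi_X(a) = \lim_N \sum_n \Theta^X_{\xi_n,\eta_n}$, the identification $\iota_0(a) = \iota_1(\phi_X(a))$ in $A_\infty$ gives $\iota_0(a) = \lim_N \sum_n \lsca{[\partial_{\xi_n}],[\partial_{\eta_n}]}$; applying $\pi_\infty$ and using the Hilbert-bimodule covariance $\pi_\infty(\lsca{u, w}) = t_\infty(u)t_\infty(w)^*$ yields $\sigma(a) = \lim_N \sum_n v(\xi_n) v(\eta_n)^* = \psi_v(\phi_X(a))$. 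The gauge-invariant uniqueness theorem then gives $\ca(\sigma, v) \simeq \O_X$.

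The remaining step, which I expect to be the main obstacle, is to show that $\ca(\sigma, v)$ actually exhausts $\ca(\pi_\infty, t_\infty) = \O_{X_\infty}$. My plan is to prove by induction on $n$ that each $\pi_\infty(\iota_n(r))$ with $r \in \K(X^{\otimes n})$ and each $t_\infty([s])$ with $s \in \K(X^{\otimes n}, X^{\otimes n+1})$ lies in $\ca(\sigma, v)$. The base case $n = 0$ is immediate: $\K(A, X) = \{\partial_\xi : \xi \in X\}$ and $\iota_0(A) = \sigma(A)$. The inductive step rests on two facts. First, Hilbert-bimodule covariance collapses adjacent $tt^*$-pairs into $\pi_\infty$, since $t_\infty([u])t_\infty([w])^* = \pi_\infty([uw^*])$. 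Second, the right $A_\infty$-action on $X_\infty$ climbs the direct system --- after identifying $\partial_\xi \otimes \id_X$ with left multiplication by $\xi$ on $X$, a routine computation gives
\[
[\partial_\xi] \cdot \iota_1(\Theta^X_{\eta,\zeta}) = [(\partial_\xi \otimes \id_X) \circ \Theta^X_{\eta,\zeta}] = [\Theta^{X, X^{\otimes 2}}_{\xi \otimes \eta,\,\zeta}].
\]
Iterating these two moves one checks inductively that
\[
v(\xi_1)\cdots v(\xi_{n+1})\, v(\eta_n)^* \cdots v(\eta_1)^* = t_\infty([\Theta^{X^{\otimes n}, X^{\otimes n+1}}_{\xi_1 \otimes \cdots \otimes \xi_{n+1},\,\eta_1 \otimes \cdots \otimes \eta_n}]),
\]
and the balanced words of this shape produce $\pi_\infty(\iota_n(\K(X^{\otimes n})))$; by density these exhaust the generators of $X_\infty$ and $A_\infty$, completing the identification $\ca(\sigma, v) = \O_{X_\infty}$ and hence the theorem.
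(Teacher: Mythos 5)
Your proposal is correct and follows essentially the same route as the paper: essentiality comes from the preceding lemma, the restriction $(\pi_\infty|_A, t_\infty|_X)$ is covariant because $J_X=\phi_X^{-1}(\K(X))\subseteq J_{X_\infty}$, gauge invariance gives the embedding $\O_X\hookrightarrow\O_{X_\infty}$, and surjectivity comes from identifying the generators of $A_\infty$ and $X_\infty$ with elements of $\ca(\pi_\infty|_A,t_\infty|_X)$. Your inductive computation with the balanced words $v(\xi_1)\cdots v(\xi_{n+1})v(\eta_n)^*\cdots v(\eta_1)^*$ is simply an explicit verification of the paper's terse closing assertion that the fixed point algebra is $\pi(A_\infty)$ and $t(X_\infty)=\overline{t(X)\pi(A_\infty)}$.
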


\begin{proof} Since $\phi_X$ is injective, we have that $J_X \subseteq
\phi_X^{-1}(\K(X))$, hence $J_X$ is contained in $J_\infty$. Thus if
$(\pi,t)$ is a an injective covariant representation that admits a
gauge action of $(X_\infty,A_{\infty},\phi_{X_{\infty}})$, then $(\pi|_A,t|_X)$ is also an
injective covariant representation of $(X,A,\phi_X)$ that admits a
gauge action. So $\O_X$ embeds in $\O_{X_{\infty}}$. It suffices
then to prove that $\O_X$ is exactly $\O_{X_{\infty}}$. But this
is true since the fixed point algebra of $\O_X$ is $\pi(A_{\infty})$ and $t(X_{\infty})=
\overline{t(X)\pi(A_{\infty})}$.
\end{proof}

%%%%%%%%%%%%%%%%%%%%%%%%%%%%%%%%%%%%%%%%%%%%%%%%%%%%%%%%%%
%%%%%%%%%%%%%%%%%%%%%%%%%%%%%%%%%%%%%%%%%%%%%%%%%%%%%%%%%%%%%%%%%%%%%%
\section{Appendix: Contractible graphs} \label{appendix}

In this section we verify the properties of contractible graphs needed in
Section~\ref{section non-injective case}.

\begin{definition} Let $\G$ be a connected, directed graph $\G$ with a distinguished vertex $p_0 \in \vrt$ and no sources. The graph $\G$ is said to be \textit{contractible at $p_0$} if the subalgebra $\bbC L_{p_0} \subseteq \O_{\G}$ is a full corner of the Cuntz-Krieger algebra $\O_{\G}$.
\end{definition}

Therefore the $\ca$-algebra of a contractible graph is Morita equivalent to the compact operators in a very strong sense. In this section we give an algorithmic way of describing contractible graphs which will allow us to complete the proof of Theorem~\ref{non-injective case}.

If $\G$ is a directed graph with no sources and $S \subseteq \vrt$, then we define
\begin{align*}
\P_{s}(S) &=S\cup \{ p \in \vrt \mid \exists w \in \Ginfty \mbox{ with } r(w) \in S, s(w)=p \}\\
\E(S) &= \{ e \in \edg \mid s(e) \in \P_{s}(S) \} \\
\P_{r}(S)&= \{ p \in \vrt \mid r^{-1}(p) \subseteq \E(S) \},
\end{align*}
where $\Ginfty$ denotes the collection of all finite paths in $\G$.

Given any connected, directed graph $\G$ with a distinguished vertex $p_0 \in \vrt$ and no sources, we set
\begin{align*}
S_0 &= \{ p_0 \}  \\
S_n &= \P_{r}(S_{n-1}), \mbox{ for }n\geq1.
\end{align*}

\begin{definition}
Let $\G$ be a connected. directed graph with a distinguished vertex $p_0 \in \vrt$ and no sources. A vertex $p \in \vrt$ is said to be \textit{$p_0$-accessible} (or simply, accessible) if
\[
p \in \bigcup_{n \in \bbN} \, S_n= \bigcup_{n \in \bbN} \, \P_{r}(S_n)
\]
Similarly, an edge $e \in \edg$ is said to be $p_0$-accessible iff $e \in \bigcup_{n \in \bbN}\, \E(S_n)$. The graph $\G$ is said to be $p_0$-accessible iff every element of $\vrt$ is accessible.
\end{definition}

We can state now the desired characterization of contractible graphs.

\begin{theorem} \label{contractiblethm}
Let $\G$ be a connected directed graph with a distinguished vertex $p_0 \in \vrt$ and no sources. Then $\bbC L_{p_0} \subseteq \O_{\G}$ is a full corner iff the following two conditions are satisfied:
\begin{itemize}
\item[$(i)$] The graph $\G$ is $p_0$-accessible.
\item[$(ii)$] There exists exactly one infinite path $w$ with $r(w)=p_0$.
\end{itemize}
\end{theorem}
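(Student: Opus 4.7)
The plan is to establish the two implications separately, with the corner/fullness split providing the natural structure. I begin with the $(\Leftarrow)$ direction and show first that $L_{p_0} \O_{\G} L_{p_0} = \bbC L_{p_0}$ under hypotheses (i), (ii). Writing $w = e_1 e_2 e_3 \cdots$ for the unique infinite path ending at $p_0$, $p_n = s(e_n)$, and $\mu_k = e_1 \cdots e_k$, the key combinatorial observation is that every finite path $\mu$ with $r(\mu)=p_0$ must be an initial segment of $w$. Indeed, if $\mu$ first deviated from $w$ at position $k$ through some edge $e_k' \neq e_k$ into $p_{k-1}$, the no-sources hypothesis would let me extend $e_k'$ backwards ad infinitum to produce a second infinite path with range $p_0$, contradicting (ii). The same extension argument forces $r^{-1}(p_{k-1}) = \{e_k\}$ for every $k\geq 1$ and the vertices $p_n$ to be pairwise distinct. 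Relation (5) then telescopes $L_{\mu_k} L_{\mu_k}^*$ down to $L_{p_0}$, while $L_{\mu_j} L_{\mu_k}^* = 0$ for $j\neq k$ since $L_{p_j} L_{p_k}=0$. Because the corner is the closed span of such products $L_\mu L_\nu^*$, the identification with $\bbC L_{p_0}$ follows.

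For fullness in this same direction, put $\I := \langle L_{p_0} \rangle$ and $H := \{ p \in \vrt : L_p \in \I \}$. A direct computation shows that $H$ is source-hereditary ($L_{s(e)} = L_e^* L_p L_e \in \I$ whenever $p \in H$ and $r(e) = p$) and, via relation (5), saturated: if $p$ receives finitely many edges and every source along them lies in $H$, then $L_p = \sum_e L_e L_e^* \in \I$. I then prove $S_n \subseteq H$ by induction on $n$, starting from $S_0=\{p_0\}\subseteq H$. The step from $n-1$ to $n$ proceeds by first using source-heredity to enlarge $H$ so that it contains $\P_s(S_{n-1})$, and then using saturation at each $p \in \P_r(S_{n-1}) = S_n$, whose incoming edges all have source in $\P_s(S_{n-1}) \subseteq H$. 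Condition (i) now yields $H = \vrt$, and the identity $L_e = L_{r(e)} L_e$ upgrades this to $\I = \O_{\G}$.

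For the $(\Rightarrow)$ direction I argue by contrapositive for each condition. If (ii) failed, then there would be two distinct infinite paths ending at $p_0$ first diverging at some position $k$; setting $\mu = e_1 \cdots e_k$ and $\nu = e_1 \cdots e_{k-1} e_k'$ produces a nonzero element $L_\mu L_\nu^* \in L_{p_0} \O_{\G} L_{p_0}$ which is nilpotent (since $L_\nu^* L_\mu = 0$ by relation (2)) and therefore cannot equal a scalar multiple of $L_{p_0}$, contradicting the corner identity; existence of at least one such infinite path follows from no-sources by iterated backward extension. If (i) failed, a vertex $q \notin \bigcup_n S_n$ would witness that $H := \bigcup_n S_n$ is a proper saturated hereditary subset containing $p_0$, whose associated gauge-invariant ideal traps $L_{p_0}$ while excluding $L_q$, contradicting fullness of $L_{p_0}$. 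The most delicate step I anticipate is the inductive propagation $S_{n-1}\subseteq H \Rightarrow S_n\subseteq H$ in the fullness argument, because saturation via relation (5) only closes $H$ up at vertices with finitely many incoming edges; handling $p$ with $|r^{-1}(p)|=\infty$ will likely require either an implicit row-finiteness assumption on $\G$ or a more careful separate treatment of such vertices.
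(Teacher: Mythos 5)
Your division of labour is the mirror image of the paper's: the paper declares the implication $(i)\wedge(ii)\Rightarrow\text{full corner}$ to be ``easy to see'' and spends all of its effort on the converse, first deriving from the full-corner hypothesis that $\G$ is acyclic and that any two vertices are joined by only finitely many paths (both via the fact that $\O_{\G}$ must then consist of compact operators), and then running an elementary pigeonhole argument for accessibility and a proper-subprojection argument for uniqueness of the infinite path. You instead work hardest on the forward direction. Your fullness argument via the hereditary and saturated set $H=\{p : L_p\in\I\}$ is sound and is essentially the graph-algebra structure theory that the paper deliberately avoids in favour of a self-contained argument; your closing worry about vertices with $|r^{-1}(p)|=\infty$ is legitimate, since relation $(5)$ is only imposed at finite receivers, although the issue does not arise in any graph the paper actually uses.

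There are, however, two genuine gaps. First, in the corner computation for $(\Leftarrow)$, the assertion that the vertices $p_n=s(e_n)$ are pairwise distinct does not follow from (i) and (ii): uniqueness of the infinite path forces $r^{-1}(p_{k-1})=\{e_k\}$, but if the path is eventually periodic the $p_n$ repeat without creating a second infinite path. Concretely, take $\vrt=\{p_0,a,b\}$ with edges $a\to p_0$, $b\to a$ and a loop at $b$: this graph is connected, has no sources, satisfies (i) and (ii), yet $L_{\mu_2}L_{\mu_3}^*$ (with $\mu_k$ the initial segments of the unique infinite path) is a nonzero element of $L_{p_0}\O_{\G}L_{p_0}$ which the gauge action ($z^2\bar z^{3}\neq 1$) shows is not a scalar multiple of $L_{p_0}$. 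So your telescoping argument breaks down precisely when $\G$ has a cycle; acyclicity is genuinely needed here, and while the paper's converse extracts it from the full-corner hypothesis (its Claim 1), it is not a consequence of (i) and (ii) --- your gap in fact exposes an imprecision in the forward implication as literally stated. Second, in your contrapositive for (ii), the element $L_\mu L_\nu^*$ with $\mu=e_1\cdots e_k$ and $\nu=e_1\cdots e_{k-1}e_k'$ satisfies $L_\mu L_\nu^*=L_\mu L_{s(e_k)}L_{s(e_k')}L_\nu^*$ and hence vanishes unless $s(e_k)=s(e_k')$; two infinite paths can perfectly well diverge through edges with distinct sources, in which case your ``nonzero nilpotent'' is zero and yields no contradiction. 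The repair is the paper's: $L_\mu L_\mu^*$ and $L_\nu L_\nu^*$ are mutually orthogonal nonzero projections dominated by $L_{p_0}$, so each is a projection in the corner that is neither $0$ nor $L_{p_0}$, which is impossible if the corner equals $\bbC L_{p_0}$.
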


\begin{proof}
It is easy to see that if $\G$ satisfies conditions (i) and (ii), then
$L_{p_0} \subseteq \O_{\G}$ is a full corner.

Assume now that $\bbC L_{p_0}$ is a full corner of $\O_{\G}$. We start by verifying the following two claims.

\vspace{.1in}
\noindent \textbf{Claim 1}. \textit{The graph $\G$ is acyclic.}
\vspace{.05in}

\noindent \textit{Proof of Claim 1.} By way of contradiction, assume that $\G$ has a cycle $w$. Since $\O_{\G}\otimes \K(\H) \simeq \K(\H)$, the algebra $\O_{\G} \otimes P$, where $P \in K(H)$ is a projection, is a $\ca$-algebra consisting of compact operators. However, it is easy to see that the spectrum of $L_w\otimes P$ contains the unit circle, a contradiction.

\vspace{.1in}
\noindent \textbf{Claim 2}. \textit{For any pair of vertices $p,q \in \vrt$, there are finitely many paths $w$ with $s(w)=p$ and $r(w)=q$.}
\vspace{.05in}

\noindent \textit{Proof of Claim 2.} Indeed, if not then we would have a $\ca$-algebra of compact operators containing an infinite projection, which is absurd.
\vspace{0.1in}

At this point, we could observe that $\G$ is a row-finite graph that satisfies condition (K) and therefore appeal to \cite[Theorem 4.9]{Raeb}, in order to prove that $\G$ is $p_0$-accessible. We find the following elementary argument illuminating, as well as making the proof self-contained.

Since $L_{p_0}$ is a full projection, there exist finitely many vertices $p_1, p_2, \dots p_n$ and paths $w_1, w_2, \dots, w_n$ and $v_1, v_2, \dots, v_n$ so that
\begin{align*}
\phantom{XXXXXX}s(w_i)&=p_i \,&r(w_i)&= q\\
s(v_i)&=p_i \, &r(v_i)&= p_{0}, &i=1,2, \dots, n,
\end{align*}
and the final spaces of $L_{w_i}L_{v_i}^{*}$, $i=1, 2, \dots, n$, span $L_{p_0}$. By Claim~2, there is no loss of generality assuming that the collection $\{ w_i\mid i=1, 2, \dots , n\}$ contains \textit{all} paths starting at one of the vertices $p_i$, $i=1, \dots,n$, and ending at $q$.

If all the vertices involved in the paths $w_1, w_2, \dots, w_n$ are accessible, then $q$ is accessible and there is nothing to prove. Otherwise, let $q_i$ be the first vertex in each path $w_i$, which is not accessible. Therefore each $q_i$ receives an edge $e_i$ which is not accessible and an edge $e_i'$ (the one from $w_i$) which is accessible. Since there exists a path $w_i'$ so that $w_i'e_i$ ends at $q$, there exists an $1\leq j_{i} \leq n$ so that $L_{w_{j_i}}^{*}L_{w_i'e_{i}}\neq 0$. Therefore either
\begin{itemize}
\item[(i)] $w_i'e_i$ is a sub-path of $w_i$, or,
\item[(ii)] $w_i$ is a sub-path of $w_i'e_i$.
\end{itemize}
However, if (ii) was valid, then $q_i$ would be accessible because $p_i =s(w_i)$ is. But this contrary to the way the $q_1, \dots,q_n$ were chosen. Hence, each $w_i'e_i$ is a sub-path of some $w_{j_i}$.

Since $w_1'e_1$ is a sub-path of $w_{j_1}$, there exists a path from $q_{j_1}$ ending at $q_1$. Since $q_{j_1}$ is not accessible, by repeating the same argument, we now obtain a path ending at $q_{j_1}$ and starting from some $q_{j_2}$, and so on. The pigeonhole principle implies that eventually one of the $q_j$'s will repeat itself, thus obtaining a cycle in $\G$. This contradicts Claim 2. Hence the arbitrary $q \in \vrt$ is accessible and so $\G$ is accessible as well.

It remains to show (ii). By way of contradiction assume that there exist a a path $w \in \Ginfty$ ending at $p_0$ and distinct edges $e_1, e_2$ satisfying $r(e_1)=r(e_2) = s(w)$. Then,
\begin{align*}
L_{p_0}\big(L_{we_1} L_{we_1}^* \big)L_{p_0}&= L_{we_1} L_{we_1}^*  \\
                                            &= L_{w}\big(L_{e_1}L_{e_1}^* \big) L_w^*\neq L_{p_0}
\end{align*}
since $L_{e_1}L_{e_1}^*$ is a proper subspace of $L_{s(w)}$.
Hence $L_{p_0} \O_{\G}L_{p_0}$ is not a corner of $\O_{\G}$, a contradiction.
 \end{proof}


\begin{thebibliography}{99}

\bibitem{Ar06} W. Arveson,
\textit{Notes on the unique extension property}, 2006,
http://math.berkeley.edu/~arveson/Dvi/unExt.pdf.

\bibitem{BHRS} T. Bates, J. Hong, I. Raeburn, W. Szymanski,
\textit{The ideal structure of the $\ca$-algebras of infinite
graphs}, Illinois J. Math. \textbf{46} (2002), 1159--1176.

\bibitem{BleLeM04} D. P. Blecher and C. Le Merdy,
\textit{Operator algebras and their modules---an operator space
approach}, volume 30 of \textit{London Mathematical Society
Monographs, New Series}, The Clarendon Press Oxford University
Press, Oxford, 2004.

\bibitem{Cun} J. Cuntz,
\textit{K-theory for certain $\ca$-algebras II},
 J. Operator Theory \textbf{5} (1981), 101–-108.

 \bibitem{DavKat} K. Davidson, E. Katsoulis,
 \textit{Operator algebras for multivariable dynamics},
 Mem. Amer. Math. Soc. \textbf{209}, (2011), no 983.


\bibitem{DavR} K. Davidson, J. Roydor,
\textit{$\ca$-envelopes of tensor algebras for multivariable dynamics},
Proc. Edinb. Math. Soc. (2) \textbf{53} (2010), 333-–351.

\bibitem{Deac} V. Deaconu, A. Kumjian, D. Pask, A.Sims,
\textit{Graphs of $C^*$-correspondences and Fell bundles},   Indiana Univ. Math. J.~\textbf{59} (2010).

\bibitem{DrMc05} M.A. Dritschel, S.A. McCullough,
\textit{Boundary representations for families of representations of
operator algebras and spaces}, J. Operator Theory, \textbf{53}(1)
(2005), 159--167.

\bibitem{FMR} N. Fowler, P. Muhly, I. Raeburn,
\textit{Representations of Cuntz-Pimsner algebras}
Indiana Univ. Math. J. \textbf{52} (2003), 569–605.

\bibitem{FR} N. Fowler, I. Raeburn,
\textit{The Toeplitz algebra of a Hilbert bimodule}, Indiana Univ.
Math. J. \textbf{48} (1999), 155--181.

\bibitem{Ham79} M.Hamana,
\textit{Injective envelopes of operator systems}, Publ. RIMS Kyoto
Univ. \textbf{15}(1979), 773-785.

\bibitem{kakar} E. T.A. Kakariadis,
\textit{Semicrossed products and reflexivity}, J. Operator Theory,
to appear.

\bibitem{kakarkats10} E. T.A. Kakariadis, E.G. Katsoulis
\textit{Semicrossed products of operator algebras and their
C*-envelopes}, preprint (arXiv:1008.2374).

\bibitem{KaK} E. Katsoulis and D. Kribs,
\textit{Isomorphisms of algebras associated with directed graphs},
Math. Ann. \textbf{330}, (2004), 709--728.

\bibitem{KatsKribs06} E. G. Katsoulis, D. Kribs, \textit{Tensor
algebras of {$\ca$}-correspondences and their {$\ca$}-envelopes}, J.
Funct. Anal. \textbf{234}(1) (2006), 226--233.


\bibitem{Kats03} T. Katsura, \textit{On {$C\sp *$}-algebras
associated with {$\ca$}-correspondences}, Contemp. Math.
\textbf{335} (2003), 173--182.

\bibitem{Kats04} T. Katsura, \textit{On {$C\sp *$}-algebras
associated with {$\ca$}-correspondences}, J. Funct. Anal.
\textbf{217}(2) (2004), 366--401.

\bibitem{Lan} C. Lance,
\textit{Hilbert $\ca$-modules. A toolkit for operator algebraists},
 London Mathematical Society Lecture Note Series, \textbf{210} Cambridge University Press, Cambridge, 1995. x+130 pp. ISBN: 0-521-47910-X.

 \bibitem{MS} P.S. Muhly, B. Solel,
\textit{Tensor algebras over $\ca$-correspondences:
representations, dilations and $\ca$-envelopes} J. Funct. Anal.
\textbf{158} (1998), 389--457.

\bibitem{MS2} P.S. Muhly, B. Solel,
\textit{On the simplicity of some Cuntz-Pimsner algebras},
 Math. Scand. \textbf{83} (1998), 53–-73.

\bibitem{MuTom04} P. S. Muhly, M. Tomforde, \textit{Adding tails to
{$\ca$}-correspondences}, Doc. Math. \textbf{9} (2004), 79--106.

\bibitem{Pas} W. Paschke,
\textit{The crossed product of a $\ca$-algebra by an endomorphism}
Proc. Amer. Math. Soc. \textbf{80} (1980), 113--118.

\bibitem{Ped79} G. K. Pedersen,
\textit{{$C^{\ast} $}-algebras and their automorphism groups},
volume 14 of \textit{London Mathematical Society Monographs},
Academic Press Inc.,1979.

\bibitem{Pet2} J. Peters,
\textit{Semicrossed products of C*-algebras},
J. Funct. Anal. \textbf{59} (1984), 498--534.

\bibitem{Pet} J. Peters,
\textit{The $\ca$-envelope of a semicrossed product and nest representations}, Contemp. Math.  \textbf{503} (2009), 197--215.

\bibitem{Pim} M. Pimsner,
\textit{A class of $C\sp *$-algebras generalizing both
Cuntz-Krieger algebras and crossed products by $\bbZ$}, Free
probability theory (Waterloo, ON, 1995), 189--212, Fields Inst.
Commun., \textbf{12}, Amer. Math. Soc., Providence, RI, 1997.

\bibitem{Pop3} G. Popescu,
\textit{Non-commutative disc algebras and their representations}
Proc. Amer. Math. Soc. \textbf{124}, (1996), 2137--2148.

\bibitem{Raeb} I. Raeburn,
\textit{Graph algebras},
CBMS Regional Conference Series in Mathematics, 103, 2005.

\bibitem{RS} I. Raeburn, W. Szymanski,
\textit{Cuntz-Krieger algebras of infinite graphs and matrices}
Trans. Amer. Math. Soc. \textbf{356} (2004)  39--59.

\bibitem{Sch00} J. Schweizer, \textit{Dilations of {$C^*$}-correspondences
and the simplicity of {C}untz-{P}imsner algebras}, J. Funct. Anal.
\textbf{180}(2) (2000), 404--425.

\bibitem{Stac} P. Stacey,
\textit{Crossed products of $\ca$-algebras by endomorphisms},
J. Austr. Math. Soc., Ser. A \textbf{56} (1993), 204--212.
\end{thebibliography}
\end{document}